\newcommand{\ip}[1]{\langle#1\rangle}
\newcommand{\g}{\gamma}
\newcommand{\fp}[1]{%
  \mathrm{fp}_{\le#1}%
}
\newcommand{\bprn}[1]{%
  BP_\R\ip{#1}%
}
\newcommand{\bpcm}[1]{BP^{(\!(C_4)\!)}\langle#1\rangle}
\newcommand{\bpgm}[1]{BP^{(\!(G)\!)}\langle#1\rangle}
\newcommand{\Z}{{\mathbb  Z}}
\newcommand{\R}{{\mathbb R}}
\newcommand{\F}{{\mathbb F}}
\newcommand{\BP}{BP}
\newcommand{\BPR}{\BP_{\R}}
\DeclareMathOperator{\Map}{Map}
\DeclareMathOperator{\Spec}{Spec}
\DeclareMathOperator{\Ind}{Ind}
\newcommand{\Sp}{\mathcal Sp}
\mathchardef\mhyphen=45
\numberwithin{equation}{section}
\newtheorem{theorem}{Theorem}[section]
\newtheorem{lemma}[theorem]{Lemma}
\newtheorem{corollary}[theorem]{Corollary}
\newtheorem{proposition}[theorem]{Proposition}
\newtheorem*{theorem*}{Theorem}
\newtheorem*{proposition*}{Proposition}
\theoremstyle{remark}
\newtheorem{remark}[theorem]{Remark}
\newtheorem{example}[theorem]{Example}
\newtheorem{notation}[theorem]{Notation}
\theoremstyle{definition}
\newtheorem{definition}[theorem]{Definition}
\begin{document}

\title{On higher real $K$-theories and finite spectra}
\author{Christian Carrick}
\address{Mathematical Institute, Utrecht University, Utrecht, 3584 CD, the Netherlands}
\email{c.d.carrick@uu.nl}
\thanks{The first author was supported by the National Science Foundation under Grant No. DMS-2401918 as well as the NWO grant VI.Vidi.193.111. The second author was supported by the National Science Foundation under Grant No. DMS-2105019}
\author{Michael A. Hill}
\address{University of Minnesota, Minneapolis, MN 55455}
\email{mahill@umn.edu}

\begin{abstract}
    We study higher chromatic height analogues $eo_h$ of the connective real $K$-theory spectrum $ko$. We show that $eo_h$ is an \emph{fp} spectrum of type $h$ in the sense of Mahowald--Rezk. We use these to study an Euler characteristic for \emph{fp} spectra introduced by Ishan Levy, and give a partial answer to a question of Levy regarding the algebraic $K$-theory of the category of finite type $h$ spectra. As a corollary, we prove that if the generalized Moore spectrum $\mathbb{S}/(2^{i_0},v_1^{i_1},\ldots,v_h^{i_h})$ exists, then the $2$-adic valuation of $\prod i_j$ must exceed that of the height $h$.
\end{abstract}

\maketitle
\tableofcontents
\section{Introduction}
Chromatic homotopy theory stratifies the stable homotopy category according to \emph{chromatic height} by pulling back the height filtration on the moduli of formal groups, via Quillen's theorem on complex bordism \cite{quillen}. Conceptually, the thick subcategory theorem of Hopkins--Smith \cite{hopkinssmith} states that every thick subcategory of finite $p$-local spectra is of the form
\[\Sp_{>h}^\omega:=\{X\text{ a finite }p\text{-local spectrum}\mid K(h)_*X=0\},\]
where $K(h)$ is the $h$-th Morava $K$-theory. Computationally, the chromatic spectral sequence of Miller--Ravenel--Wilson \cite{mrw} organizes the stable homotopy groups of spheres $\pi_*\mathbb{S}$ into $v_h$-periodic families.

Producing explicit information -- such as a particular type $>h$ finite spectrum or $v_h$-periodic family -- is extremely difficult. Essentially all of the progress in this direction has been done at small heights $h$ and with the help of designer cohomology theories like connective real $K$-theory $ko$ and topological modular forms $tmf$; see \cite{adamsperiodicity,mmmm,marksatya} for example. These are connective spectra with strong finiteness properties that have a good balance of richness and tractability. They are the prototypical examples of \emph{fp} spectra in the sense of Mahowald--Rezk \cite{MR}: bounded below, $p$-complete spectra with finitely presented cohomology over the Steenrod algebra.

At the prime $p=2$ and heights $h>2$, there is essentially no known explicit information about $v_h$-self maps and $v_h$-periodic families. There are also very few known examples of \emph{fp} spectra at these heights. We introduce a class of \emph{fp} spectra at each height $h$ that play an analogous role to $ko$ and $tmf$. The examples $ko$ and $tmf$ are sometimes referred to as $eo_1$ and $eo_2$, as they are connective models of the fixed points $EO_1$ and $EO_2$ of Lubin--Tate theories with respect to the maximal finite subgroup of the Morava stabilizer group $\mathbb{S}_h$ at heights $h=1$ and $2$, respectively.

At each height $h=2^{n-1}m$, there is a $C_{2^n}$ subgroup of $\mathbb{S}_h$, which is a maximal finite $2$-subgroup when $h\not\equiv 2$ mod $4$, by work of Hewett \cite{hewett}. In their work on the Kervaire invariant, the second author, Hopkins, and Ravenel introduced analogous theories $eo_h$ at these heights, coming from $C_{2^n}$-equivariant stable homotopy \cite{HHR}. These \emph{connective higher real $K$-theories} are also known as the $\bpgm{m}$'s (see Definition \ref{def:bpgm}), and they are formed by analogy with Atiyah's Real $K$-theory $k_\R$, which exhibits $ko$ as the fixed points of $ku$ with respect to its natural $C_2$-action \cite{atiyah}.

\begin{theorem}\label{thm:introthmeoh}
    The connective higher real $K$-theories $eo_h$ are fp spectra of type $h$.
\end{theorem}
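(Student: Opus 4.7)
The plan is to verify the two defining properties of a Mahowald--Rezk fp spectrum of type $h$: first, that $eo_h=\bpgm{m}^G$ is bounded below, $2$-complete, with $H^*(eo_h;\F_2)$ finitely presented over the mod-$2$ Steenrod algebra $\cA$; and second, that its chromatic type is exactly $h$, in the sense that $eo_h\wedge V$ has finite homotopy for some finite type $h+1$ complex $V$ but not for any type $\le h$ complex. The connectivity and $2$-completeness are immediate from the construction as the $G$-fixed points of a connective, $2$-complete $G$-spectrum.

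For the finite presentation of cohomology, the strategy is to exploit the slice filtration of $\bpgm{m}$ from \cite{HHR}. After killing every $\bar r_i$ with $i>m$, the slice associated graded consists of $H\mZ$-module spectra indexed by regular-representation suspensions, with generators drawn from the polynomial algebra on $\bar r_1,\dots,\bar r_m$; consequently each equivariant degree sees only finitely many nonzero slices. Passing to $G$-fixed points gives a convergent slice spectral sequence which, when fed into the mod-$2$ cohomological Adams spectral sequence, exhibits $H^*(eo_h;\F_2)$ as the output of a doubly finite construction and hence finitely presented over $\cA$. In the known cases $h=1,2$ this should recover the classical identifications $H^*(ko;\F_2)=\cA/\!/\cA(1)$ and $H^*(tmf;\F_2)=\cA/\!/\cA(2)$.

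To pin down the chromatic type, I would compare $eo_h$ to the Lubin--Tate fixed-point spectrum $E_h^{hG}$, where $E_h$ is the height-$h$ Morava $E$-theory with its $G$-action through the Morava stabilizer group. A natural map $eo_h\to E_h^{hG}$ should exhibit the target as the $K(h)$-localization of the source, and since $E_h^{hG}$ has nonzero $K(h)$-homology, the type is at least $h$. For the upper bound, the norm $N_{C_2}^G(\bar r_m)$ yields a class in $\pi_*eo_h$ detecting a power of $v_h$, while $\bar r_1,\dots,\bar r_{m-1}$ and their norms supply classes detecting $v_1,\dots,v_{h-1}$; smashing with a finite type $h+1$ complex $V$ kills all of these together with $2$, leaving a bounded spectrum $eo_h\wedge V$ with finite homotopy groups, hence type $\le h$.

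The hardest step will be the cohomology calculation, since for $h>2$ no clean closed-form description of $H^*(eo_h;\F_2)$ as an $\cA$-module is available, and one must genuinely control the interaction of the slice filtration, the equivariant mod-$2$ cohomology of $\bpgm{m}$, and the Steenrod algebra action on its fixed points. A conceptual workaround, should the direct calculation prove intractable, is to invoke the Mahowald--Rezk equivalence between finite presentation of cohomology and the existence of a finite type $h+1$ complex $V$ with $eo_h\wedge V$ of finite homotopy (essentially verified in the type argument), and then deduce finite presentation of $H^*(eo_h;\F_2)$ indirectly rather than by explicit computation.
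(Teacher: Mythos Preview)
Your overall shape is right, but the argument for fp type $\le h$ has a genuine gap. The assertion that $N_{C_2}^G(\bar r_m)$ detects a power of $v_h$ while the remaining $\bar r_i$ and their norms detect $v_1,\ldots,v_{h-1}$ is not correct as stated: the relationship between the generators $t_i^G\in\pi_*^e\bpgm{m}$ and the chromatic classes $v_j$ is governed by the recursive formulas of Beaudry--Hill--Shi--Zeng, and there is no clean bijection. What is actually needed is that $\pi_*^e\bpgm{m}/(2,v_1,\ldots,v_h)$ is a finite $\F_2$-vector space, equivalently that each $t_i^G$ becomes nilpotent modulo $(2,v_1,\ldots,v_h,G\cdot t_{m+1}^G,\ldots,G\cdot t_h^G)$; this is a nontrivial double induction on $|G|$ and $m$ using those formulas in an essential way. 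Even granting that, you have only controlled the \emph{underlying} homotopy of $\bpgm{m}\otimes V$; the passage to $G$-fixed points is not automatic and is where most of the work lies. The paper handles it by building a Koszul-type filtration on the fiber of $\bpgm{m}\to\bpgm{m-1}$ via norms in filtered $G$-spectra, then showing that the norms $N_{C_2}^H(\bar t_i^G)$ act nilpotently on $\mathrm{Res}^G_H\bpgm{m}\otimes\mathrm{inf}_e^HV$ for all $H$, and finally that $\bpgm{m}\otimes\mathrm{inf}_e^GV$ lies in the thick $MU^{(\!(G)\!)}$-module subcategory generated by the $G/H_+\otimes H\underline{\Z}\otimes\mathrm{inf}_e^GV$, whose fixed points are visibly in $\mathrm{Thick}(H\F_2)$.

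On the other two points you are closer to the paper. The slice route to finite presentation of cohomology is not pursued; as you anticipate in your final paragraph, the paper works entirely through the finite-complex characterization of fp spectra and never computes $H^*(eo_h;\F_2)$. Your lower bound via the $K(h)$-local equivalence $\bpgm{m}\to E_h$ would work and is a genuinely different route from the paper's: the paper instead uses Levy's Euler characteristic, proving a relation $|H|\cdot\chi_{\bpgm{m}^H}=\chi_{\bpgm{m}^e}$ (again via the Koszul filtration) and computing $\dim_{\F_2}\big(\pi_*^e\bpgm{m}/(2,v_1,\ldots,v_h)\big)$ as an odd product of Gaussian binomial coefficients, which forces $\chi_{\bpgm{m}^H}$ to be nonzero on type $h{+}1$ Moore spectra and hence rules out type $<h$.
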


The theories $eo_h$ provide a tractable way to access higher height phenomena, and Theorem \ref{thm:introthmeoh} establishes the desired finiteness properties that guarantee this tractability. These theories have been closely studied at heights $\le 4$ via the equivariant slice spectral sequence \cite{HHR,hhrc4,hswx}. For example, the height $4$ theory $eo_4$ is a connective model of the detection spectrum $\Omega$ of the second author, Hopkins, and Ravenel used in their solution of the Kervaire invariant one problem \cite{HHR}.

We give an application of Theorem \ref{thm:introthmeoh} at all heights to the nonexistence of generalized Moore spectra. In \cite{adamsjx4}, Adams constructed a self map of the mod $2$ Moore spectrum $\mathbb{S}/2$ inducing multiplication by $v_1^4$ on $BP$-homology. The periodicity theorem of Devinatz--Hopkins--Smith \cite{dhs} guarantees that at each height $h$, there exists an exponent $i_h>0$ and a self map $v_h^{i_h}$ of the generalized Moore spectrum $\mathbb{S}/(2^{i_0},v_1^{i_1},\ldots,v_{h-1}^{i_{h-1}})$ (when the latter exists) inducing multiplication by $v_h^{i_h}$ on $BP$-homology. The exponents that appear determine periodicities that appear in $\pi_*\mathbb{S}$, such as the $|v_1^4|=8$ fold periodicity observed by Adams in the image of the $J$-homomorphism.

Very little is known about which exponents $i_j$ can or cannot appear. At height $2$ for example, work of Behrens, the second author, Hopkins, and Mahowald \cite{mmmm} constructs a minimal $v_2^{32}$-self map on $\mathbb{S}/(2,v_1^4)$, and work of Behrens--Mahowald--Quigley \cite{bmq} constructs a minimal $v_2^{32}$-self map of $\mathbb{S}/(8,v_1^8)$. At odd primes $p$, work of Nave uses higher real $K$-theories to prove that the Smith--Toda complex $\mathbb{S}/(p,v_1,\ldots,v_h)$ does not exist if $h\ge (p+1)/2$ \cite{nave}.

We give the first nonexistence criterion at the prime $2$ valid at all heights. In the following, let $\nu_2(-)$ denote the $2$-adic valuation of an integer.

\begin{theorem}\label{thm:introthmmoore}
    If the generalized Moore spectrum $\mathbb{S}/(2^{i_0},\ldots,v_{h}^{i_{h}})$ exists, then 
    \[
        \nu_2\bigg(\prod\limits_{k=0}^hi_k\bigg)>\nu_2(h).
    \]
\end{theorem}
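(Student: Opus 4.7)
The plan is to combine Theorem \ref{thm:introthmeoh} with the Euler characteristic for \emph{fp} spectra, in the sense of Levy and as developed in the body of the paper, using $eo_h$ as a test object. Concretely, this framework attaches to each \emph{fp} spectrum $X$ of type $h$ a $2$-adic rational number $\chi(X) \in \mathbb{Q}_2$, additive on cofiber sequences, with the following multiplicativity property: whenever $F$ is a finite spectrum of type $h+1$, the smash product $X \wedge F$ is finite, $\chi(X \wedge F) \in \mathbb{Z}_2$, and $\chi(X \wedge F)$ is expressible in terms of $\chi(X)$ and the self-map data used to construct $F$.

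The first and most delicate step is to compute $\chi(eo_h)$. Writing $h = 2^{n-1}m$ with $m$ odd, so that $\nu_2(h) = n-1$, the identification $eo_h = \bpgm{m}$ for $G = C_{2^n}$ together with the Hill--Hopkins--Ravenel equivariant norm structure produces a $2^n$-fold symmetry. Unpacking Levy's Euler characteristic in this equivariant setting should yield
\[
\nu_2(\chi(eo_h)) \;=\; -n \;=\; -(\nu_2(h)+1),
\]
where the denominator $2^n$ is ultimately traceable to the order of the group $G = C_{2^n}$ over which one takes norms. This is the heart of the argument and the step where Theorem \ref{thm:introthmeoh} is essential: it guarantees that $\chi(eo_h)$ is even defined.

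Next, assume $F = \mathbb{S}/(2^{i_0},\ldots,v_h^{i_h})$ exists and apply the multiplicativity formula to $X = eo_h$. Decomposing $F$ as an iterated cofiber of its defining self-maps $v_k^{i_k}$ and repeatedly using additivity of $\chi$, each stage contributes a factor of $i_k$ up to a $2$-adic unit, yielding
\[
\chi(eo_h \wedge F) \;=\; u \cdot \chi(eo_h) \cdot \prod_{k=0}^h i_k
\]
for some $u \in \mathbb{Z}_2^\times$. Since $eo_h \wedge F$ is finite, the left side lies in $\mathbb{Z}_2$. Combined with the valuation computation of the previous step, this forces
\[
\nu_2\Bigl(\prod_{k=0}^h i_k\Bigr) \;\geq\; \nu_2(h)+1 \;>\; \nu_2(h),
\]
as claimed.

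The main obstacle is the precise computation $\nu_2(\chi(eo_h)) = -(\nu_2(h)+1)$: this requires pinning down Levy's Euler characteristic in equivariant terms and an explicit analysis of the $C_{2^n}$-structure of $\bpgm{m}$, so that the factor of $|G|$ is accounted for cleanly rather than washed into a unit. A secondary difficulty is verifying that each cofiber $\mathbb{S}/v_k^{i_k}$ really does contribute $i_k$ (and not, say, $i_k$ times an extra $2$-adic factor that would weaken or alter the conclusion); this likely involves a case-by-case comparison with the degree of $v_k^{i_k}$ at each chromatic level. By contrast, the multiplicativity of $\chi$ and the additivity arguments used to reduce to the individual cofibers should be formal consequences of the general Euler-characteristic framework once established.
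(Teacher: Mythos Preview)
Your high-level strategy matches the paper's: pair Levy's Euler characteristic against $eo_h$ and exploit that a factor of $2^n$ (with $h=2^{n-1}m$, $m$ odd) appears in the denominator. Two points of framing need correcting, and one of your steps does not work as stated.

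First, Levy's invariant is a \emph{pairing} $\chi_X(K)\in\Z$, not a single number $\chi(X)\in\mathbb Q_2$. What you are implicitly using is that, by Burklund--Levy, $K_0(\Sp^\omega_{>h})$ has rank one, so every $\chi_X$ is a rational multiple of $\chi_{BP\langle h\rangle}$; your ``$\chi(X)$'' is that multiplier. In the paper this multiplier for $X=eo_h=\bpgm{m}^G$ is obtained in two pieces: a $K$-theory relation $|G|\cdot[\bpgm{m}^G]\equiv[\bpgm{m}^e]$ modulo torsion (Theorem~\ref{thm:mainktheoryrelation}, from the Koszul/norm filtration you allude to), and the algebraic fact that $\chi_{\bpgm{m}^e}$ is an \emph{odd} integer multiple of $\chi_{BP\langle h\rangle}$, namely a product of Gaussian binomial coefficients (Corollary~\ref{cor:bpgmodd}, Proposition~\ref{prop:keyEulercharacteristicrelation}). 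The second input is exactly what rules out the ``extra $2$-adic factor'' you worry about, and it lives on the $eo_h$ side, not on the Moore-spectrum side.

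Second, your proposed computation of $\chi(eo_h\wedge F)$ by peeling off the self-maps $v_k^{i_k}$ one at a time does not work: the intermediate cofibers $\mathbb S/(2^{i_0},\dots,v_k^{i_k})$ have type $k+1<h+1$, so $eo_h$ smashed with them is not $\pi$-finite and the Euler characteristic is undefined at those stages. The paper avoids this by computing on the \emph{underlying} spectrum: regularity of $(2,v_1,\dots,v_h)$ in $\pi_*^e\bpgm{m}$ (Corollary~\ref{cor:bpgmregularsequence}) gives $\chi_{\bpgm{m}^e}(F)=(\text{odd})\cdot\prod_k i_k$ directly, and the $K$-theory relation then transports this to $|G|\cdot\chi_{eo_h}(F)$. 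Integrality of $\chi_{eo_h}(F)$ finishes the argument exactly as you say.
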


Theorem \ref{thm:introthmmoore} comes by way of an Euler characteristic on \emph{fp} spectra introduced by Ishan Levy \cite{levy}. Levy studies the algebraic $K$-theory $K_0(\Sp^\omega_{>h})$ of the category of type $>h$ finite spectra. Whereas the thick subcategory theorem classifies thick subcategories of finite spectra, information about $K_0(\Sp^\omega_{>h})$ may be used to classify \emph{stable} subcategories. Levy's Euler characteristic provides a map of abelian groups
\begin{equation}\label{eq:eulerbph}
    \chi_{BP\langle h\rangle}:K_0(\Sp_{>h}^\omega)\to\Z.
\end{equation}
Burklund--Levy prove that $\chi_{BP\langle h\rangle}$ is an isomorphism rationally \cite{burklundlevy}, and Levy conjectures that the image of $\chi_{BP\langle h\rangle}$ is generated by the order of the maximal finite $2$-subgroup of $\mathbb{S}_h$ (\cite[Question 7.4]{levy}). We partially verify Levy's conjecture at all heights $h\not\equiv 2$ mod $4$ by proving the following, of which Theorem \ref{thm:introthmmoore} is a corollary. 

\begin{theorem}\label{thm:introthmchi}
    Let $h=2^{n-1}m$. The map $\chi_{BP\langle h\rangle}$ of (\ref{eq:eulerbph}) is divisible by $2^n$.
\end{theorem}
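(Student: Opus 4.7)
The plan is to use Theorem \ref{thm:introthmeoh} to compare $\chi_{BP\langle h\rangle}$ with the analogous Euler characteristic attached to $eo_h$. By Theorem \ref{thm:introthmeoh}, the connective higher real $K$-theory $eo_h$ is itself an fp spectrum of type $h$, so Levy's construction produces a $\Z$-valued map
\[
\chi_{eo_h} : K_0(\Sp_{>h}^\omega) \to \Z.
\]
The target identity is $\chi_{BP\langle h\rangle} = 2^n \cdot \chi_{eo_h}$, from which the theorem follows immediately, since the right-hand side visibly factors through $2^n\Z$.

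To prove the identity, I would argue that $BP\langle h\rangle$ is a ``generically free'' module of rank $2^n$ over $eo_h$ in the sense relevant to Levy's Euler characteristic. The geometric origin of the factor $2^n$ is the very construction of $eo_h$: by definition $eo_h = (\bpgm{m})^{C_{2^n}}$ arises as the $C_{2^n}$-fixed points of an equivariant refinement of $BP\langle h\rangle$, mirroring the Galois extension $E_h^{hC_{2^n}} = EO_h \to E_h$ of degree $|C_{2^n}| = 2^n$ at the level of Morava $E$-theories. Levy's Euler characteristic should be multiplicative under such rank-$r$ extensions of fp coefficient theories, since smashing a type $>h$ finite spectrum with the larger coefficient theory scales the underlying dimension count by $r$. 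Applied to the pair $(eo_h, BP\langle h\rangle)$ with $r = 2^n$, this yields the desired identity.

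The main obstacle is making the rank-$2^n$ statement precise in a way that interacts cleanly with Levy's definition of $\chi$. There is no direct ring map $eo_h \to BP\langle h\rangle$ realizing the Galois picture on the nose at the level of connective spectra, so the comparison likely proceeds through a zigzag: either through the $K(h)$-localizations where the Galois extension $EO_h \to E_h$ is manifest, or by working directly with $\bpgm{m}$ as a $C_{2^n}$-equivariant spectrum and comparing the Euler characteristic of its fixed points to that of its underlying spectrum via a tom Dieck-type decomposition. Verifying that this comparison descends to Levy's Euler characteristic and produces exactly the factor $2^n = |C_{2^n}|$ (and not a refinement thereof) is where the heart of the computation lies.
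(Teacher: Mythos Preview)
Your overall strategy --- compare $\chi_{BP\langle h\rangle}$ to $\chi_{eo_h}$ and extract a factor of $2^n$ --- is the right one, and it is what the paper does. But the specific identity you aim for, $\chi_{BP\langle h\rangle} = 2^n\cdot\chi_{eo_h}$, is false in general, and the reason exposes a genuine gap in your plan.

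The issue is that the underlying spectrum of $\bpgm{m}$ is \emph{not} $BP\langle h\rangle$. It is a polynomial ring on $h$ generators, but in the wrong degrees: there are $|G|/2$ generators each in degrees $2(2^i-1)$ for $1\le i\le m$, rather than one generator in each degree $2(2^i-1)$ for $1\le i\le h$. So neither the Galois picture $EO_h\to E_h$ nor a tom Dieck decomposition will hand you $BP\langle h\rangle$ as a rank-$2^n$ object over $eo_h$. What the paper actually proves is the pair of relations
\[
\chi_{\bpgm{m}^e} \;=\; 2^n\cdot\chi_{eo_h}
\qquad\text{and}\qquad
\chi_{\bpgm{m}^e} \;=\; d\cdot\chi_{BP\langle h\rangle},
\]
where $d=\dim_{\F_2}\big(\pi_*^e\bpgm{m}/(2,v_1,\dots,v_h)\big)$ is an \emph{odd} integer (a product of Gaussian binomial coefficients, computed in Corollary~\ref{cor:bpgmodd}). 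Divisibility of $\chi_{BP\langle h\rangle}$ by $2^n$ follows because $d$ is coprime to $2^n$, but your clean identity does not hold once $|G|>2$.

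Each of the two relations above requires substantial work that your proposal does not supply. The first is Theorem~\ref{thm:introthmktheoryclass}: the $K$-theory identity $[\bpgm{m}^e]\equiv |G|\cdot[\bpgm{m}^G]$ modulo torsion in $K_0(\fp{h})$. This is proved not by any Galois or tom Dieck argument but by the Koszul-type filtration of Section~\ref{sec:filtration}, which interpolates between the equivariant quotient $M/(G\cdot\bar{x})$ and the ordinary cofiber $M/N_{C_2}^G(\bar{x})$; the $K$-theory relation is then extracted via the cell structure of $S^{\rho_G}$ (Proposition~\ref{prop:ktheoryofshift} and Corollary~\ref{cor:ktheoryoffiltration}). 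The second relation is Proposition~\ref{prop:keyEulercharacteristicrelation}, which rests on the regularity and finiteness results of Section~\ref{sec:algebraicstuff} (Theorem~\ref{thm:introthmbpgmhomotopy}). Your $K(h)$-local route does not obviously recover either of these: $K(h)$-local spectra are not \emph{fp}, so the Galois degree there does not transport directly to a statement about $K_0(\fp{h})$, and in any case it would relate $eo_h$ to $E_h$ or to $\bpgm{m}^e$, not to $BP\langle h\rangle$.
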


When $h\not\equiv 2$ mod $4$, letting $m$ be odd, Levy's conjecture states that the image of $\chi_{BP\langle h\rangle}$ is generated by $2^n$. Theorem \ref{thm:introthmchi} implies that the generator is \emph{divisible} by $2^n$. This result constrains more generally the existence of finite type $h+1$ spectra that are in some sense \emph{close} to being a Smith--Toda complex $\mathbb{S}/(2,v_1,\ldots,v_h)$, like the type $2$ complex $Y/v_1=\mathbb{S}/(2,\eta,v_1)$ closely studied by Mahowald \cite{ehp} (see Remark \ref{remark:smallcomplexes}).

We prove the above theorems by a careful analysis of the formal group laws underlying the $eo_h$ theories, and by induction arguments on the height $h$. These induction arguments make use of the ``transchromatic'' phenomena present in the $eo_h$ theories that were observed by Meier--Shi--Zeng \cite{transchromatic} in the context of the equivariant slice spectral sequence. In fact, we show that certain height shifting layers in the slice filtration can be viewed as spectral analogues of Koszul resolutions. We implement these resolutions in a particularly convenient manner using the $G$-symmetric monoidal structure on the $\infty$-category of filtered $G$-spectra.

\subsection{Acknowledgments} The authors are grateful to Hausdorff Research Institute for Mathematics in Bonn funded by the Deutsche Forschungsgemeinschaft (DFG, German Research Foundation) under Germany's Excellence Strategy - EXC-2047/1 - 390685813 and to the organisers of the trimester program \emph{Spectral Methods in Algebra, Geometry, and Topology} 2022. The authors would like to thank the Isaac Newton Institute for Mathematical Sciences, Cambridge, for support and hospitality during the \emph{Equivariant Homotopy Theory in Context} 2025 programme where work on this paper was undertaken. This work was supported by EPSRC grant no EP/R014604/1.

The authors would also like to thank Max Blans, Lennart Meier, Ishan Levy, Jack Davies, Ryan Quinn, Sven van Nigtevecht, and Gijs Heuts for helpful comments and conversations. We thank the Online Encyclopedia of Integer Sequences (OEIS) for pointing us in the direction of the Gaussian binomial coefficients appearing in Section \ref{sec:algebraicstuff}.

\subsection{Notation and conventions}
\begin{enumerate}
    \item We work $2$-locally throughout this paper.
    \item Unless explicitly stated otherwise, $G$ will denote a cyclic group of order $2^n$, and $\gamma$ will denote a generator of $G$. We denote by $G'=C_{2^{n-1}}$ the index $2$ subgroup of $G$.
    \item We use the letter $h$ for chromatic heights, and when $m$ and $G=C_{2^n}$ are fixed, we set $h=m|G|/2=2^{n-1}m$.
    \item The notation $G\cdot x$ represents the set $\{x,\gamma x,\ldots,\gamma^{2^{n-1}-1}x\}$. For example, $\Z[G\cdot x]$ is the polynomial ring $\Z[x,\gamma x,\ldots,\gamma^{2^{n-1}-1}x]$. In the spectral context, this is understood in terms of twisted monoid rings, as in \cite[Section 2.4.1]{HHR}. For example, we set $\mathbb{S}[G\cdot \bar{x}]:=N_{C_2}^G(\mathbb{S}[\bar{x}])$ (see Definition \ref{def:equivariantcofiber}).
    \item We use both $\rho_G$ and $\rho_{|G|}$ to denote the regular representation of the group $G$.
    \item We use $\inf_{G/H}^G$ to denote the inflation functor, i.e. the left adjoint to the functor $\Sp^G\xrightarrow{(-)^H}\Sp^{G/H}$.
\end{enumerate}

\section{Outline of results and methods}

Mahowald and Rezk introduced the notion of an \emph{fp} spectrum, a \(p\)-complete, bounded below spectrum which has finitely presented cohomology as a module over the Steenrod algebra \cite{MR}. An equivalent condition is that a spectrum \(E\) is \emph{fp} if for any finite spectrum \(K\) of sufficiently large chromatic height, \(E\otimes K\) has finite homotopy. The smallest chromatic height for which \(E\otimes K\) has finite homotopy is the \emph{fp} type of \(E\).

Many kinds of spectra that arise from chromatic homotopy are \emph{fp} spectra:
\begin{itemize}
    \item \(H\mathbb F_p\) is \emph{fp} of type \(-1\)
    \item \(H\mathbb Z\) is \emph{fp} of type \(0\)
    \item both \(ku\) and \(ko\) are \emph{fp} of type \(1\)
    \item \(tmf\) and flavors like \(tmf_0(3)\) and \(tmf_1(3)\) are \emph{fp} of type \(2\)
    \item \(BP\ip{h}\) is \emph{fp} of type \(h\) for all \(h\).
\end{itemize}

Computationally, \emph{fp} spectra play a critical role in our understanding of stable homotopy groups of spheres, as seen especially in the example $tmf$ (see \cite{iwx} for example). They also play a central role in algebraic $K$-theory, as they appear in Rognes' redshift conjecture and in the work of Hahn--Wilson on redshift \cite{hahnwilson}.

\subsection{Higher truncated Brown--Peterson spectra and Koszul resolutions} Chromatic heights $1$ and $2$ are well served by the \emph{fp} spectra $ko$ and $tmf$. The spectrum $BP\ip{h}$ is not an appropriate replacement at higher heights because it is complex-orientable and thus has trivial Hurewicz image in positive degrees. One aim of this article is to introduce a class of interesting, workable examples of \emph{fp} spectra at all heights $h$ that approximate the sphere much better than $BP\ip{h}$.

One way of conceptualizing the sense in which $ko$ and $tmf$ approximate the sphere well at their respective heights is that they provide connective models of higher real $K$-theories. A theorem of Devinatz--Hopkins \cite{devhop} states that there is an equivalence $L_{K(h)}\mathbb{S}\simeq E_h^{h\mathbb{G}_h}$, where the lefthand side is the $K(h)$-local sphere spectrum and the right hand side is the fixed points of a Lubin--Tate theory of height $h$ with respect to the extended Morava stabilizer group $\mathbb{G}_h=\mathbb{S}_h\rtimes\mathrm{Gal}(\F_{p^h}/\F_p)$. Hopkins--Miller observed that by taking fixed points of $E_h$ with respect to \emph{finite} subgroups $G\subset\mathbb{G}_h$, the resulting \emph{higher real $K$-theories} $EO_h(G):=E_h^{hG}$ give tractable higher height analogues of real $K$ theory $KO$ \cite{hopmiller}. For example, at heights $h=1$ and $2$, the maximal finite subgroups of $\mathbb{S}_h$ are isomorphic to $C_2$ and   $G_{24}$ (the binary tetrahedral group) respectively. The fixed points $E_1^{hC_2}$ is the $2$-completion of $KO$, and $E_2^{hG_{24}}$ is closely related to $TMF$ (see \cite{ghmr}).

Working with the $EO_h(G)$'s in practice is limited by the size of these theories. The $EO_h(G)$'s are non-connective, and their mod $p$ homology vanishes, making it impossible to understand these theories directly from the point of view of the Adams spectral sequence. Moreover, the homotopy groups of the $EO_h(G)$'s are not degreewise finitely generated, so passing to the connective cover does not give a substantial improvement. A key feature of the study of the $EO_h(G)$'s is thus a search for good connective models $eo_h(G)$ with strong finiteness properties. These theories should, moreover, capture the same height $h$ information as $EO_h(G)$ in the sense that there is an equivalence 
\begin{equation}\label{eq:K(h)localization}
 L_{K(h)}eo_h(G)\simeq L_{K(h)}EO_h(G).   
\end{equation}

\subsubsection{Higher truncated Brown--Peterson spectra} At heights $h=2^{n-1}m$ with $p=2$, the Morava stabilizer group $\mathbb{S}_h$ contains a subgroup $G$ isomorphic to $C_{2^n}$, and every maximal $2$-subgroup of $\mathbb{S}_h$ is isomorphic to $C_{2^n}$ when $n\neq 2$ and $m$ is odd, that is when $h\not\equiv 2$ mod $4$. At each height $h$, the second author, Hopkins, and Ravenel  defined good candidates for connective models of $EO_h(H)$ via Real bordism, for $H$ any subgroup of $C_{2^n}$  \cite{HHR}. These theories are particularly accessible as they arise as the fixed points of a genuine $G$-spectrum---known as $\bpgm{m}$--- whose action comes from geometry, as opposed to the action on $E_h$, which is defined via obstruction theory. These are related to the $eo_h$-theories discussed in the introduction by the following definition.

\begin{definition}\label{def:eodef}
    Fix $m$ odd, set $h=2^{n-1}m$ and $G=C_{2^n}$, and let $H$ be a subgroup of $G$. We define $eo_h(H):=\bpgm{m}^H$, and we write $eo_h:=eo_h(G)=\bpgm{m}^G$ when $h\not\equiv 2$ mod $4$, since $G$ is a maximal $2$-subgroup of $\mathbb{S}_h$ in this case.
\end{definition}

The construction of $\bpgm{m}$ is very analogous to that of Atiyah's connective $K$-theory with Reality, $k_\R$ \cite{atiyah}, which is recovered by taking $G=C_2$ and $m=1$. Recall that the Brown--Peterson spectrum $BP$ has homotopy groups \[BP_*=\Z[v_1,v_2,\ldots],\]
and the truncated Brown--Peterson spectrum is defined as the quotient
\[BP\ip{h}:=BP/(v_{h+1},v_{h+2},\ldots),\]
so that
\[BP\ip{h}_*=\Z[v_1,v_2,\ldots,v_h].\]
The complex bordism spectrum $MU$ admits a $C_2$-action by complex conjugation, which underlies the genuine $C_2$-spectrum $MU_\R$, the Fujii--Landweber Real bordism spectrum \cite{fujii}\cite{landweber}. The Quillen idempotent lifts to an equivariant idempotent, forming the Real Brown--Peterson spectrum $BP_\R$.


The crucial algebraic input to the slice theorem of the second author, Hopkins, and Ravenel, proved in their work on the Kervaire invariant, was the observation that one may form the $G$-spectrum $MU^{(\!(G)\!)}:=N_{C_2}^GMU_\R$, and its $2$-local summand
\[BP^{(\!(G)\!)}:=N_{C_2}^{G}BP_\R,\]
 and the underlying homotopy groups of $MU^{(\!(G)\!)}$ and $BP^{(\!(G)\!)}$ admit explicit descriptions as $G$-algebras \cite[Proposition 5.45]{HHR}. In fact,
\[\pi_{*}^e BP^{(\!(G)\!)}=\Z[G\cdot t_1^G,G\cdot t_2^G,\ldots],\]
where $|t_i^G|=2(2^i-1)$. The notation $G\cdot t_i^G$ refers to the list of polynomial generators $\{t_i^G,\gamma t_i^G,\ldots,\gamma^{2^{n-1}-1}t_i^G\}$ where $\gamma$ is a generator of $G$, and $\gamma^{2^{n-1}}t_i^G=-t_i^G$. The underlying spectrum of $BP^{(\!(G)\!)}$ is the tensor power $BP^{\otimes 2^{n-1}}$, which is easily seen to have polynomial homotopy groups, but the $G$-algebra description above is much more subtle. Similar explicit descriptions are not known for non cyclic $2$-groups, e.g. $G=Q_8$.

The higher truncated Brown--Peterson spectra are now constructed analogously to the $BP\ip{h}$'s: one forms a quotient
\[\bpgm{m}:=BP^{(\!(G)\!)}/(G\cdot t_{m+1}^G,G\cdot t_{m+2}^G,\ldots)\]
so that 
\[\pi_*^e\bpgm{m}=\Z[G\cdot t_1^G,\ldots, G\cdot t_m^G].\]
One must form this quotient with care so as to obtain a $G$-equivariant quotient. This is done via the method of twisted monoid rings due to the second author, Hopkins, and Ravenel \cite[Section 2]{HHR}, which we recall in Section \ref{sec:filtration}.

\begin{remark}
    When $G=C_2$, the $C_2$-spectra $\bpgm{m}$ are also called $BP_\R\ip{m}$, or the Real truncated Brown--Peterson spectra, as they are $C_2$-equivariant lifts of $BP\ip{m}$.
\end{remark}

\begin{remark}
    At height $0$, one has $BP^{(\!(G)\!)}\langle 0\rangle=H\underline{\Z}$ for all $G$, and $H\underline{\Z}^H=H\Z$ for all $H\subset G$. At height $1$, Atiyah's $k_\R$ is a $C_2$-$\mathbb{E}_\infty$ form of the spectrum $BP^{(\!(C_2)\!)}\langle 1\rangle$ with underlying spectrum $ku$ and fixed point spectrum $ko$. At height $2$, a theorem of the second author and Meier \cite{hillmeier} shows that there is a $C_2$-$\mathbb{E}_\infty$ form of  $BP^{(\!(C_2)\!)}\langle 2\rangle$ with underlying spectrum $tmf_1(3)$ and fixed point spectrum $tmf_0(3)$. It is expected that there is a $C_4$-$\mathbb{E}_\infty$ form of  $BP^{(\!(C_4)\!)}\langle 1\rangle$ with underlying spectrum $tmf_1(5)$ and $C_4$ fixed points $tmf_0(5)$. Beyond these cases, it is not known if $\bpgm{m}$ admits any kind of ring structure.
\end{remark}

\begin{remark}
    The $\F_2$- and $MU$-homology of the fixed points of $\bpgm{m}$ have been studied by the authors along with Ravenel in \cite{CHR} and \cite{MUhomology}. These computations are quite complicated at heights $>2$, but can be approached via the slice filtration.
\end{remark}

\begin{theorem}\label{thm:introthmfp}
    For all $H\subset G$, the spectrum $\bpgm{m}^H$ is an fp spectrum of type $m|G|/2$. In particular, $eo_h(H)$ is an \emph{fp} spectrum of type $h$.
\end{theorem}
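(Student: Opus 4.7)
The plan is to verify two properties: $\bpgm{m}^H$ is bounded below and $2$-complete, and (i) $\bpgm{m}^H \otimes V$ has finite homotopy for every finite $V$ of type $>h$, while (ii) for some type-$h$ finite $V$, the group $\pi_*(\bpgm{m}^H \otimes V)$ is infinite. Boundedness below follows from $\bpgm{m}$ being a quotient of the slice-connective $\BPG$ by elements in positive slice degree, and $2$-completeness is inherited from $\BPG$.

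For the upper bound (i), I would argue by induction on $m\ge 0$, with the base case $\bpgm{0}^H = H\Z$ being classically fp of type $0$. For the inductive step, I would exploit the $G$-equivariant identification $\bpgm{m-1}=\bpgm{m}/(G\cdot t_m^G)$. Since $G\cdot t_m^G$ is an orbit of $2^{n-1}$ nontrivially permuted elements, one cannot merely iterate a one-variable cofiber sequence; instead one uses the $G$-symmetric monoidal structure on filtered $G$-spectra alluded to in the introduction to build a $G$-equivariant Koszul-type resolution connecting $\bpgm{m}$ and $\bpgm{m-1}$ through a finite filtration. Taking $H$-fixed points and smashing with $V$ of type $>h$ turns this into a finite filtration whose layers are subquotients controlled by the inductive hypothesis that $\bpgm{m-1}^H$ is fp of type $h-|G|/2 < h$, giving finiteness of $\bpgm{m}^H\otimes V$.

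For the lower bound (ii), I would use the restriction map $\bpgm{m}^H \to \bpgm{m}^e$. The underlying spectrum is complex-orientable via $\BPG|_e \simeq BP^{\otimes 2^{n-1}}$, with homotopy the polynomial ring $\Z_{(2)}[G\cdot t_1^G,\ldots,G\cdot t_m^G]$ on exactly $h$ positive even-degree generators. A careful identification of $v_h$ as a polynomial in the $G\cdot t_i^G$ pins the underlying formal group law at exact height $h$. Hence for a type-$h$ finite $V$ equipped with a $v_h$-self map $\alpha$, the action of $\alpha$ on $\bpgm{m}^e\otimes V$ is nonnilpotent, and by nilpotence detection through restriction, so is its action on $\bpgm{m}^H\otimes V$, yielding an infinite $\alpha$-periodic family.

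The hard part is the Koszul-type construction: the naive single cofiber $\Sigma^D\bpgm{m}\to\bpgm{m}\to\bpgm{m-1}$ relates $\bpgm{m}$ to itself, so by itself it does not propagate finiteness across the induction step. Arranging the filtration so that the layers genuinely involve $\bpgm{m-1}$---or at least pieces already known to be finite after smashing with $V$---is what requires the equivariant filtered $G$-symmetric monoidal machinery and the transchromatic phenomena observed by Meier--Shi--Zeng. A secondary subtlety for the lower bound is the explicit identification of $v_h$ in $\pi_*^e\bpgm{m}$, but this should follow from standard formal group law calculations.
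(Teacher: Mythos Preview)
Your outline has the right shape but misses the key technical inputs in both directions.

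\textbf{Upper bound.} The Koszul filtration on $\bpgm{m}$ coming from the norm of the two-step filtration on $\mathbb{S}[\bar{t}_m^G]$ does \emph{not} produce a finite filtration whose layers are built out of $\bpgm{m-1}$. Its associated graded (Proposition~\ref{prop:associatedgradedformula}) has $\bpgm{m-1}$ only in grading $0$; the top grading is $\Sigma^{k\rho_G}\bpgm{m}$ itself, and the intermediate layers are induced partial quotients $\mathrm{Ind}_{H_f}^G\mathrm{Res}^G_{H_f}(\bpgm{m})/(H_f\cdot S)$ that are neither $\bpgm{m}$ nor $\bpgm{m-1}$. So the filtration by itself gives you nothing more than the naive cofiber does: it factors multiplication by $N_{C_2}^G(\bar{t}_m^G)$. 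The missing ingredient is exactly the one you flagged as ``secondary'' for the lower bound: one must first prove the algebraic nilpotence statement (Theorem~\ref{thm:mainalgebraicthm}) that the $t_i^G$ are nilpotent modulo $(2,v_1,\ldots,v_h,G\cdot t_{m+1}^G,\ldots)$, then lift this to spectrum-level nilpotence of the norms $N_{C_2}^H(\bar{t}_i^G)$ on $\bpgm{m}\otimes K$ for $K$ of type $h+1$ (Lemma~\ref{lemma:nilpotentactiononbpgm}). Only \emph{then} does the Koszul filtration, iterated via Proposition~\ref{prop:thicksubcatsprop}, place $\bpgm{m}\otimes K$ in the thick subcategory generated by $H\underline{\Z}\otimes K$. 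Note also that the paper does not induct on $m$ reducing to $\bpgm{m-1}$; it kills all the $\bar{t}_i^G$ at once and lands directly at $H\underline{\Z}$, precisely because the intermediate layers force a simultaneous induction on $|G|$ and the number of variables.

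\textbf{Lower bound.} The step ``by nilpotence detection through restriction'' does not work. The restriction map $\bpgm{m}^H\to\bpgm{m}^e$ is not split $2$-locally (the transfer-restriction composite is multiplication by $|H|$, a power of $2$), and there is no general principle by which non-nilpotence of a self-map on the target implies non-nilpotence on the source. The paper's argument (Corollary~\ref{cor:fpgem|G|/2}) is genuinely different: it uses the Koszul filtration again, this time to derive the $K$-theory relation $|H|\cdot[\bpgm{m}^H]\equiv[\bpgm{m}^e]$ modulo torsion in $K_0(\fp{h})$ (Theorem~\ref{thm:mainktheoryrelation}), then feeds this into Levy's Euler characteristic together with the crucial fact that $\dim_{\F_2}\pi_*^e\bpgm{m}/(2,v_1,\ldots,v_h)$ is \emph{odd} (Corollary~\ref{cor:bpgmodd}). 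If $\bpgm{m}^H$ were \emph{fp} of type $<h$, one would get $\chi_{\bpgm{m}^e}=|H|\cdot\chi_{\bpgm{m}^H}=0$ on some type $h+1$ Moore spectrum, contradicting the explicit nonzero value from Proposition~\ref{prop:keyEulercharacteristicrelation}. The formal-group input you identified is indeed needed, but it enters through this oddness/regularity computation rather than through a direct nilpotence-detection argument.
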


In general, work of Beaudry, the second author, Shi, and Zeng \cite{BHSZ} establishes a $G$-equivariant map $\bpgm{m}\to E_h$, which is a $K(h)$-local equivalence, giving the equivalence of (\ref{eq:K(h)localization}) upon taking fixed points. This allows one to use the much more tractable slice spectral sequence of $\bpgm{m}$ as a connective model of the homotopy fixed point spectral sequence of $E_h$ with respect to its $G$-action. This result is established by giving formulas for the image of the $v_i$ generators in $BP_*$ in the underlying homotopy of $BP^{(\!(G)\!)}$. Section \ref{sec:algebraicstuff} is devoted to using these same formulas to prove the following.

\begin{theorem}\label{thm:introthmbpgmhomotopy}
    The quotient $\pi_*^e\bpgm{m}/(2,v_1,\ldots,v_h)$ is a finite dimensional $\F_2$-vector space of odd dimension.
\end{theorem}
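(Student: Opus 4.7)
The plan is to identify $\pi_*^e \bpgm{m}/(2, v_1, \ldots, v_h)$ as a graded complete intersection over $\F_2$ and read off its $\F_2$-dimension from the degree data. Note that $\pi_*^e \bpgm{m} = \Z[G\cdot t_1^G, \ldots, G\cdot t_m^G]$ is a polynomial ring on exactly $h = 2^{n-1}m$ homogeneous generators: for each $1 \le j \le m$ the orbit $G\cdot t_j^G$ contributes $2^{n-1}$ variables $\gamma^k t_j^G$, all of degree $|t_j^G| = 2(2^j-1)$. We are quotienting by $2$ together with exactly $h$ additional elements $v_1, \ldots, v_h$ of degrees $2(2^i-1)$ coming from the underlying map $BP \to \bpgm{m}$. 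The first step is to use the Araki-type formulas developed in Section \ref{sec:algebraicstuff} to write each $v_i$ modulo $2$ as an explicit homogeneous polynomial in the $\gamma^k t_j^G$, carefully tracking leading terms.

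The core of the argument is to show that the mod-$2$ reductions $\bar v_1, \ldots, \bar v_h$ form a regular sequence in $\F_2[G\cdot t_1^G, \ldots, G\cdot t_m^G]$, or equivalently (since we are in a polynomial ring with exactly the right number of generators) that the quotient is finite-dimensional over $\F_2$. I would exploit the ``Koszul resolution'' perspective on the slice filtration flagged in the introduction: the Araki recursion expresses each $v_i$ with a leading term that is a norm-type product of $t$-generators, and a weight-filtration argument should reduce regularity of the full sequence to the evident regularity of the resulting monomial leading-term system. Establishing this leading-term structure cleanly is the main expected obstacle, since the interaction between the Araki recursion and the $G$-action modulo $2$ is combinatorially delicate, especially for the $v_i$ with $i > m$, which must unfold into genuine polynomials in the finite list of $t$-generators rather than single variables.

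Once regularity is in hand, the quotient has Poincar\'e series
\[
    \frac{\prod_{i=1}^h \bigl(1 - t^{2(2^i-1)}\bigr)}{\prod_{j=1}^m \bigl(1 - t^{2(2^j-1)}\bigr)^{2^{n-1}}},
\]
and taking the limit as $t \to 1$ yields total $\F_2$-dimension
\[
    \frac{\prod_{i=1}^h 2(2^i-1)}{\prod_{j=1}^m \bigl(2(2^j-1)\bigr)^{2^{n-1}}} \;=\; \frac{\prod_{i=1}^h (2^i-1)}{\prod_{j=1}^m (2^j-1)^{2^{n-1}}},
\]
where the powers of $2$ cancel because both products contain exactly $h$ of them. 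Every remaining factor $2^i - 1$ is odd, so this ratio --- which is a positive integer by the complete-intersection property --- is automatically odd, yielding the claim. The Gaussian binomial coefficients mentioned in the acknowledgments presumably arise by grouping the surviving factors into products of $q$-binomials at $q = 2$, but the finite- and odd-dimensionality statement follows directly from the ratio-of-odd-integers observation and does not require this further combinatorial identification.
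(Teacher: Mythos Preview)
Your Poincar\'e-series computation and the oddness argument are correct and essentially match the paper's treatment in Proposition~\ref{prop:poincarebpgm} and Corollary~\ref{cor:bpgmodd}; the paper packages the dimension as a product of Gaussian binomials at $q=2$, but the oddness, as you observe, already follows from the fact that every factor $2^i-1$ is odd.

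The genuine gap is in the regularity step. You propose to extract monomial leading terms from the Araki-type recursion for the $v_i$ and reduce to a monomial system via a weight filtration, but you acknowledge this is the main obstacle and do not carry it out. The paper does \emph{not} attempt any leading-term analysis. Instead it reverses the logic: it first proves finiteness of the quotient directly (Theorem~\ref{thm:mainalgebraicthm} and Corollary~\ref{cor:bpgmfinite}) by showing that each generator $t_i^G$ for $i\le m$ is nilpotent modulo the ideal $I=(2,v_1,\ldots,v_h,G\cdot t_{m+1}^G,\ldots,G\cdot t_h^G)$. This nilpotence is established by a double induction on $|G|$ and $m$ using the recursive BHSZ formulas: one passes from $G'$ to $G$ and from $m-1$ to $m$, using the relations in grades $m+1,\ldots,2m$ to show that inverting $t_m^G$ forces all conjugates $\gamma t_i^G$ to vanish, whence the localization is the zero ring. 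Since the quotient $\F_2$-algebra is then generated by finitely many nilpotents, it is finite. Regularity (Corollary~\ref{cor:bpgmregularsequence}) follows a posteriori from the Krull-dimension argument you already state: a homogeneous sequence of length $h$ in a polynomial ring of Krull dimension $h$ with finite-dimensional quotient is automatically regular. Your leading-term route may be viable, but the paper's nilpotence-first approach sidesteps exactly the combinatorial difficulty you flag.
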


Theorem \ref{thm:introthmbpgmhomotopy} is the algebraic underpinning of Theorem \ref{thm:introthmfp}. In fact, we identify the dimension explicitly in terms of Gaussian binomial coefficients (see Corollary \ref{cor:bpgmodd}).

\subsubsection{Koszul resolutions} In the case $G=C_2$, Theorem \ref{thm:introthmfp} may be obtained inductively by use of cofiber sequences
\begin{equation}\label{eq:vncofiber}
\Sigma^{(2^m-1)\rho_2}\bprn{m}\xrightarrow{\overline{v}_m}\bprn{m}\to\bprn{m-1},
\end{equation}
where $\bar{v}_m$ is a $C_2$-equivariant lift of $v_m$. In the base case $m=0$, $\bprn{0}^{C_2}=H\Z$ is an \emph{fp} spectrum. Using Theorem \ref{thm:introthmbpgmhomotopy}, one may show that $\overline{v}_m$ acts nilpotently on $\bprn{m}\otimes F$, where $F$ is a finite spectrum of chromatic type $>m$. It follows then from the assumption that $\bprn{m-1}^{C_2}$ is an \emph{fp} spectrum and the fact that \emph{fp} spectra form a thick subcategory that $\bprn{m}^{C_2}$ is an \emph{fp} spectrum.

When $|G|>2$, the quotient $\bpgm{m-1}$ is not obtained by taking the cofiber of a self map of $\bpgm{m}$. It is a more subtle equivariant construction that cones off all restrictions, transfers, conjugates, and norms of a class in the $C_2$-equivariant homotopy of $\bpgm{m}$. While the cofiber sequence of (\ref{eq:vncofiber}) mirrors the canonical resolution 
\[\Sigma^{|x|}k[x]\xrightarrow{x}k[x]\to k\]
of $k$ as a $k[x]$-module, we introduce a filtration on the fiber of the map 
\[\bpgm{m}\to \bpgm{m-1}\]
that mirrors the Koszul resolution of $k$ as a $k[G\cdot x]$-module.

In fact, one way to record such resolutions while keeping track of the $G$-action is as follows: one can view the above resolution as a $k[x]$-linear filtration
\[X^\bullet=(\cdots \to 0\to\Sigma^{|x|}k[x]\xrightarrow{x}k[x])\]
of $k[x]$ with $\mathrm{gr} (X^\bullet)=k\oplus \Sigma^{|x|}k[x]$. For a given $C_2$-action on $k[x]$, one may apply the norm $N:=N_{C_2}^G$, using the $G$-symmetric monoidal structure on filtered $k$-modules, giving a $k[G\cdot x]$-linear filtration
\[NX^\bullet=(\cdots \to 0\to\Sigma^{|Nx|}k[G\cdot x]\to\cdots \to k[G\cdot x])\]
of $k[G\cdot x]$. Since $\mathrm{gr}$ is $G$-symmetric monoidal, one has
\[\mathrm{gr}(NX^\bullet)=N(k\oplus \Sigma^{|x|}k[x]).\]
The functor $N$ does not commute with sums, but this right hand side may be computed explicitly in terms of equivariant summands.

In Section \ref{sec:filtration}, we explicitly identify the associated graded of these normed filtrations, which allows us to interpolate between the equivariant quotient 
\[\bpgm{m}/(G\cdot \bar{t}_m^G)\]
and the ordinary cofiber $\bpgm{m}/(N_{C_2}^G(\bar{t}_m^G))$, where $\bar{t}_m^G$ is a $C_2$-equivariant lift of $t_m^G$ (see Definition \ref{def:bpgm}). In Section \ref{sec:fpsection}, this leads to an inductive proof of Theorem \ref{thm:introthmfp} along the lines of the $BP_\R\ip{m}$ case above.

\subsection{The Euler characteristic for \emph{fp} spectra}  From a conceptual point of view, \emph{fp} spectra participate in a sort of duality with finite spectra. The category of \emph{fp} spectra is a thick subcategory of spectra, and it admits a filtration by the thick subcategories $\fp{h}$ of \emph{fp} spectra of type $\le h$. The tensor product of spectra restricts to a pairing of stable $\infty$-categories:
\begin{equation}\label{eq:pairing}
    \fp{h}\otimes \Sp^{\omega}_{>h}\to\mathrm{Thick}(H\F_p).
\end{equation}

\begin{definition}[Levy]
    The Euler characteristic for \emph{fp} spectra is the pairing
    \[\chi:K_0(\fp{h})\otimes K_0(\Sp^{\omega}_{>h})\to K_0(\mathrm{Thick}(H\F_p))\cong\Z\]
    induced by applying $K_0$ to the pairing (\ref{eq:pairing}).
\end{definition}

\begin{lemma}\label{lem:introlemma}
    The pairing $\chi$ is given explicitly by
    \[\chi_E(K):=\chi([E]\otimes[K])=\sum\limits_i(-1)^i\log_p\big|\pi_i(E\otimes K)\big|\]
for $E\in\fp{h}$ and $K\in \Sp^\omega_{>h}$.
\end{lemma}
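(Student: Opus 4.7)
The plan is to unwind the definition of $\chi$ and reduce to a standard computation of $K_0(\mathrm{Thick}(H\F_p))$. The pairing $\fp{h}\otimes\Sp^\omega_{>h}\to\mathrm{Thick}(H\F_p)$ comes from the tensor product, which is bi-exact in each variable, so the induced pairing on Grothendieck groups is bilinear and sends $[E]\otimes[K]$ to $[E\otimes K]\in K_0(\mathrm{Thick}(H\F_p))$. Thus I need to verify two things: (i) $E\otimes K$ actually lies in $\mathrm{Thick}(H\F_p)$, and (ii) under the identification $K_0(\mathrm{Thick}(H\F_p))\cong\Z$, the class $[Y]$ corresponds to $\sum_i(-1)^i\log_p|\pi_iY|$.

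For (i), note that $E$ is bounded below and $K$ is finite, so $E\otimes K$ is bounded below. The defining property of an fp spectrum of type $\le h$, applied to the type $>h$ finite spectrum $K$, gives that $E\otimes K$ has only finitely many nonzero homotopy groups. Since we work $p$-locally and $E$ is $p$-complete, each $\pi_i(E\otimes K)$ is a finite abelian $p$-group. Any spectrum with bounded, finite-$p$-group homotopy lies in $\mathrm{Thick}(H\F_p)$: use its (finite) Postnikov tower to reduce to the Eilenberg--Mac~Lane spectrum on a single finite abelian $p$-group $A$, and then use a composition series $0=A_0\subset A_1\subset\cdots\subset A_n=A$ with $A_j/A_{j-1}\cong\F_p$ to realize $HA$ by iterated cofiber sequences $H\F_p\to HA_j\to HA_{j-1}$.

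For (ii), define $\chi\colon K_0(\mathrm{Thick}(H\F_p))\to\Z$ by $\chi([Y]):=\sum_i(-1)^i\log_p|\pi_iY|$. This descends to $K_0$ because any cofiber sequence $A\to B\to C$ in $\mathrm{Thick}(H\F_p)$ induces a bounded long exact sequence of finite abelian $p$-groups, for which $\sum_i(-1)^i\log_p|\pi_iA|-\sum_i(-1)^i\log_p|\pi_iB|+\sum_i(-1)^i\log_p|\pi_iC|=0$. The map is surjective since $\chi(H\F_p)=1$, and the same Postnikov-plus-composition-series argument used in (i) shows that every class $[Y]$ in $K_0(\mathrm{Thick}(H\F_p))$ equals $\chi(Y)\cdot[H\F_p]$, giving an isomorphism. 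Combining (i) and (ii) yields $\chi_E(K)=\chi([E\otimes K])=\sum_i(-1)^i\log_p|\pi_i(E\otimes K)|$, as desired.

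Nothing in this argument is genuinely deep; the substantive content is entirely the finiteness assertion (i), which is an immediate consequence of the fp condition, and the standard identification in (ii). The main point requiring care is that the tensor product truly lands in $\mathrm{Thick}(H\F_p)$ rather than in a larger thick subcategory, which forces us to verify both bounded-aboveness (via finiteness of the total homotopy) and the $p$-primary nature of each homotopy group simultaneously.
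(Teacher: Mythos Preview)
The paper does not actually supply a proof of this lemma; it is stated in the outline section as a standard fact (attributed essentially to the setup in Levy's work) and simply used later, for instance in the proof of Lemma~\ref{lemma:eulercharacteristicbpn}. Your argument is correct and fills in precisely the details one would expect: the verification that $E\otimes K$ lands in $\mathrm{Thick}(H\F_p)$ via the fp finiteness condition together with a Postnikov-tower and composition-series reduction, and the identification $K_0(\mathrm{Thick}(H\F_p))\cong\Z$ via the alternating $\log_p$ formula. There is nothing to compare your approach against, since the paper treats the lemma as routine and omits the proof entirely.
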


This pairing may be used to constrain the existence of finite spectra, as the following example of Levy \cite[Section 7]{levy} shows.

\begin{example}\label{ex:komoore}
    The existence of the \emph{fp} spectrum $ko$ implies that the generalized Moore spectrum $\mathbb{S}/(2,v_1)$ cannot exist. Indeed, one sees from the lemma that $\chi_{ku}(\mathbb{S}/(2,v_1))=1$, but the Wood cofiber sequence implies that \[[ku]=2[ko]\in K_0(\fp{1}),\]
    so that $\chi_{ku}$ is divisible by $2$, a contradiction.
\end{example}

The fixed points of $\bpgm{m}$ are higher height generalizations of $ko$, and in Section \ref{sec:moorespectra}, we use the Koszul filtrations of Section \ref{sec:filtration} to give the following vast generalization of this example.

\begin{theorem}\label{thm:introthmktheoryclass}
   For all $H\subset G$ and $m\ge0$, there is a $K$-theory relation
   \[[\bpgm{m}^e]\equiv |H|\cdot[\bpgm{m}^H]\in K_0(\fp{h})\]
   modulo torsion.
\end{theorem}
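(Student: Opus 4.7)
I would argue by induction on the height $m$, using the Koszul filtrations of Section~\ref{sec:filtration} as the key inductive tool. The base case $m = 0$ is degenerate: $\bpgm{0} = H\underline{\Z}$ gives $\bpgm{0}^e = \bpgm{0}^H = H\Z$ at height $h = 0$, so the claim either holds directly in $K_0(\fp{0})$ modulo torsion or is treated as a boundary case of the induction.

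For the inductive step at $m \geq 1$, I consider the $G$-equivariant cofiber sequence $F_m \to \bpgm{m} \to \bpgm{m-1}$ and equip $F_m$ with its Koszul filtration. Its associated graded decomposes as a direct sum of $G$-spectra of the form $\Sigma^{a_S\rho_{H_S}}\Ind_{H_S}^G\bpgm{m}$, indexed by $G$-orbits of nonempty subsets $S \subseteq G\cdot\bar{t}_m^G$, with stabilizer $H_S \subseteq G$ and shift $a_S$ determined by $|S|$. Applying $(-)^H$ and the double-coset formula $(\Ind_{H_S}^G M)^H \simeq \bigoplus_{H\backslash G/H_S} M^{H\cap H_S}$ (which simplifies using that $G$ is abelian, so every subgroup is normal), the filtration yields an identity in $K_0(\fp{h})$ of the form
\[
    [\bpgm{m}^H] - [\bpgm{m-1}^H] = \sum_S \epsilon_{S,H}\cdot|H\backslash G/H_S|\cdot[\bpgm{m}^{H\cap H_S}],
\]
with signs $\epsilon_{S,H}\in\{\pm 1\}$ accounting for the $H$-fixed points of the regular-representation spheres $S^{a_S\rho_{H_S}}$. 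The analogous identity for the underlying spectrum ($H = \{e\}$) has $|G/H_S|$ and $\bpgm{m}^e$ in place of $|H\backslash G/H_S|$ and $\bpgm{m}^{H\cap H_S}$.

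Closing the induction would require a secondary induction on $|H|$: apply the inductive hypothesis to each $[\bpgm{m}^{H\cap H_S}]$ with $H\cap H_S \subsetneq H$ to express it rationally as $[\bpgm{m}^e]/|H\cap H_S|$. Using the combinatorial identity $|G/H_S| = (|H|/|H\cap H_S|)\cdot |H\backslash G/H_S|$ and the inner inductive hypothesis $[\bpgm{m-1}^e] \equiv |H|\cdot[\bpgm{m-1}^H]$, the two alternating sums should match term-by-term, yielding $[\bpgm{m}^e] \equiv |H|\cdot[\bpgm{m}^H]$ modulo torsion. \emph{Main obstacle}: I expect the careful tracking of the signs $\epsilon_{S,H}$ arising from the $H$-fixed points of regular-representation spheres --- which depend on the dimension $|H_S/(H\cap H_S)|$ of $(\rho_{H_S})^{H\cap H_S}$ --- to be the main technical challenge. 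A further subtlety is the isolation of the self-referential contribution from the maximal stabilizer subset $S = G\cdot\bar{t}_m^G$, which has $H_S = G$ and hence $H\cap H_S = H$, so its term on the right-hand side involves $[\bpgm{m}^H]$ itself and must be rearranged to the left-hand side before invoking the inductive hypothesis.
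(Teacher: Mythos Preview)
There is a genuine gap: you have misidentified the associated graded of the Koszul filtration. The pieces are \emph{not} of the form $\Sigma^{a_S\rho_{H_S}}\Ind_{H_S}^G\bpgm{m}$. By Proposition~\ref{prop:associatedgradedformula}, the summand indexed by a $C_2$-equivariant map $f\colon G\to\{0,1\}$ (equivalently, a subset $S=f^{-1}(1)\subseteq G/C_2$) is
\[
\Ind_{H_f}^{G}\,\Sigma^{n_f k\rho_{H_f}}\;\bpgm{m}\big/\big(H_f\cdot g\bar{t}_m^G\;\mid\;gH_f\in f^{-1}(0)/H_f\big),
\]
that is, the induction of a \emph{partial quotient} of $\bpgm{m}$ by those $G$-conjugates of $\bar{t}_m^G$ lying in the complement of $S$. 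Only the top piece ($f\equiv1$) is a shift of $\bpgm{m}$ itself; every intermediate piece is a genuine quotient strictly between $\bpgm{m}$ and $\bpgm{m-1}$. You may be importing intuition from the Borel-equivariant Koszul complex, whose terms are permutation sums of free modules; the genuine equivariant filtration built via norms does not have this shape.

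This breaks your induction as stated. After taking $H$-fixed points and applying the double-coset formula, the resulting identity does not express $[\bpgm{m}^H]-[\bpgm{m-1}^H]$ purely in terms of classes $[\bpgm{m}^{K}]$ for $K\subseteq H$; it involves classes of the partial quotients $\bigl(\bpgm{m}/(\text{proper subset of }G\cdot\bar{t}_m^G)\bigr)^{K}$, which are accessible neither by your induction on $m$ nor by your secondary induction on $|H|$. The paper resolves this by proving a separate torsion-propagation statement (Proposition~\ref{prop:ktheoryclassestorsion}): using \emph{all} of the generators $\bar{t}_1^G,\ldots,\bar{t}_m^G$ at once and inducting on $|G|$, one shows that every such partial quotient has torsion $K$-theory class once the full quotient $H\underline{\Z}$ does. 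With that in hand, Corollary~\ref{cor:ktheoryoffiltration} collapses modulo torsion to the single relation $2[\bpgm{m}^{C_{2^k}}]\equiv[\bpgm{m}^{C_{2^{k-1}}}]$, which iterates immediately to the claim. No induction on $m$ is needed.
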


In fact, Theorem \ref{thm:introthmbpgmhomotopy} implies that the Euler characteristics $\chi_{BP\ip{h}}$ and $\chi_{\bpgm{m}^e}$ differ by multiplication by an odd number (see Proposition \ref{prop:keyEulercharacteristicrelation}), and Theorems \ref{thm:introthmchi} and \ref{thm:introthmeoh} now follow.

\begin{remark}
    If one can produce a finite spectrum $K$ of chromatic type $h+1$ with the property that $\chi_{\bpgm{m}^G}(K)$ is an odd integer, this would fully verify the conjecture of Levy at all heights $h\not\equiv 2$ mod $4$.
\end{remark}

\begin{remark}\label{rmk:sharpness}
    If Levy's conjecture holds, then the constraints of Theorem \ref{thm:introthmeoh} are the sharpest that can be produced via the method of Example \ref{ex:komoore} at heights $h\not\equiv 2$ mod $4$. However, these constraints are not sharp. For example, Example \ref{ex:komoore} rules out $\mathbb{S}/(2,v_1)$ but not $\mathbb{S}/(2,v_1^2)$, and one knows from the minimal $v_1^4$ periodicity of $ko/2$ that $\mathbb{S}/(2,v_1^2)$ does not exist. If one can similarly determine the minimal periodicities of the $eo_h(H)$ theories, the constraints of Theorem \ref{thm:introthmeoh} could likely be sharpened. 
\end{remark}

\begin{remark}\label{remark:smallcomplexes}
    The work of Burklund--Levy \cite{burklundlevy} shows that $K_0(\Sp^\omega_{>h})[\frac{1}{2}]\cong\Z_{(2)}$ is generated by a class that behaves like the Smith--Toda complex $V_h:=\mathbb{S}/(2,v_1,\ldots,v_h)$ if it existed, in the sense that if $V_h$ were to exist, one would have $\chi_{BP\ip{h}}(V_h)=1$. Our Theorem \ref{thm:introthmchi} restricts the existence of small type $h+1$ complexes that are closer in $K_0(\Sp^\omega_{>h})$ to $V_h$ than the smallest existing generalized Moore spectrum of type $h+1$. 
    
    For example, the obstruction to $\mathbb{S}/2$ admitting a $v_1^1$-self map is the Hopf map $\eta$, but coning off $\eta$, one may form $Y=\mathbb{S}/(2,\eta)$ and the type 2 complex $Y/v_1$. Since $\eta$ has odd degree, this satisfies
    \[[Y/v_1]=2[V_1]\in K_0(\Sp^\omega_{>1})[\frac{1}{2}].\]
    On the other hand, the Adams $v_1^4$-self map of $\mathbb{S}/2$ is minimal, and $[\mathbb{S}/(2,v_1^4)]=4[V_1]$, so $Y/v_1$ is closer to $V_1$ than the smallest generalized Moore spectrum of type $2$. In fact, since $\chi_{BP\ip{1}}$ is divisible by $2$, it follows that $Y$ is as close to $V_1$ as a type $2$ complex can be. Similar remarks hold for the type $3$ complex $Z/v_2$ of Bhattacharya--Egger \cite{Z}. Our result thus constrains the existence of higher height analogues of $Y/v_1$ and $Z/v_2$, and in some sense gives a lower bound on the number of obstructions present in building a Smith--Toda complex at height $h$.
\end{remark}

\subsection{Borel completions of $\bpgm{m}$} In Section \ref{sec:telescope}, we explore consequences of Theorem \ref{thm:introthmfp} for the $\bpgm{m}$'s. For example, Hahn--Wilson showed that if $X$ is any \emph{fp} spectrum of type $h$, the canonical map $X\to L_h^fX$ induces an isomorphison on $\pi_i$ for $i$ sufficiently large \cite[Theorem 3.1.3]{hahnwilson}. 

When $X$ is the fixed points of an $MU^{(\!(G)\!)}$-module, the chromatic localizations are closely related to the homotopy fixed points. In fact, in Section \ref{sec:telescope}, we prove the following.

\begin{theorem}\label{thm:introthmlocalizations}
    Let $M\in\mathrm{Mod}(MU^{(\!(G)\!)})$. For all $n\ge0$ and $H\subset G$ and for all $L\in\{L_{K(n)},L_{T(n)},L_n^f,L_n\}$, the map
    \[L(M^H)\to L(M^{hH})\]
    is an equivalence of spectra. Moreover the maps $L_n^f(M^H)\to L_n(M^H)$ are equivalences of spectra.
\end{theorem}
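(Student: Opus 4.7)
The plan is to analyze the fiber of $M^H \to M^{hH}$ via the Tate pullback square, and then separately establish the telescope-style statement by reducing to Theorem~\ref{thm:introthmfp}.

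For the first claim, the fiber of $M^H \to M^{hH}$ is identified, via the Tate pullback square
\[
\begin{tikzcd}
M^H \arrow[r] \arrow[d] & \Phi^H M \arrow[d] \\
M^{hH} \arrow[r] & M^{tH},
\end{tikzcd}
\]
with the fiber of $\Phi^H M \to M^{tH}$. For nontrivial $H \subset G$, both $\Phi^H MU^{((G))}$ and $MU^{((G))tH}$ are $\F_2$-algebra spectra: the former by the HHR geometric-fixed-points norm formula, which reduces to $\Phi^{C_2} MU_\R \simeq MO$, an $\F_2$-algebra; the latter inherits the structure via the ring map $\Phi^H MU^{((G))} \to MU^{((G))tH}$ from the Tate square. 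Hence for any $M \in \mathrm{Mod}(MU^{((G))})$, the fiber of $M^H \to M^{hH}$ is a module spectrum over an $\F_2$-algebra. Since each of $L_{K(n)}, L_{T(n)}, L_n^f, L_n$ annihilates $H\F_2$-modules---for $n \geq 1$ via $K(n) \wedge H\F_2 = T(n) \wedge H\F_2 = 0$, and for $n = 0$ by torsion---applying $L$ to the fiber sequence gives $L(M^H) \simeq L(M^{hH})$. The case $H = e$ is immediate.

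For the second claim, the class of $M$ for which $L_n^f M^H \simeq L_n M^H$ is a thick subcategory of $\mathrm{Mod}(MU^{((G))})$ closed under filtered colimits. By Theorem~\ref{thm:introthmfp}, $\bpgm{m}^H$ is an \emph{fp} spectrum of type $m|G|/2$ for every $m \geq 0$, and the Mahowald--Rezk characterization of $L_n^f$ on \emph{fp} spectra yields $L_n^f \bpgm{m}^H \simeq L_n \bpgm{m}^H$. To extend this to all $MU^{((G))}$-modules, one filters a general $M$ by its slice tower (or, after $2$-completion, via the tower $BP^{((G))} \to \cdots \to \bpgm{m} \to \bpgm{m-1} \to \cdots$), whose layers are built from $\bpgm{m}$-modules to which Theorem~\ref{thm:introthmfp} applies, and uses that both $(-)^H$ and the smashing localizations $L_n^f, L_n$ commute with the relevant (co)limits. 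Combined with the first claim, which lets us replace $(-)^H$ by $(-)^{hH}$ wherever convenient, this propagates the equivalence through.

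The main obstacle is the propagation step: since the $\bpgm{m}$ appear as quotients of $BP^{((G))}$ rather than as compact generators of $\mathrm{Mod}(MU^{((G))})$, extending the \emph{fp} argument from $\bpgm{m}^H$ to all $M^H$ requires careful bookkeeping of connectivity estimates in the slice tower, aided by the uniform finiteness supplied by Theorem~\ref{thm:introthmbpgmhomotopy}.
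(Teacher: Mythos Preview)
Your argument for the first claim is close in spirit to the paper's but has a technical wrinkle: the pullback square you wrote, with $\Phi^H M$ in the upper right corner, is only correct when $H$ has prime order. For $|H|>2$ the upper-right corner of the isotropy-separation square is $(\widetilde{E}H\wedge M)^H$, which differs from $\Phi^H M$. The paper sidesteps this by working one level up: it shows that the map of $G$-spectra
\[
\mathrm{inf}_e^G(E)\otimes M\longrightarrow \mathrm{inf}_e^G(E)\otimes F(EG_+,M)
\]
is an equivalence by checking on all geometric fixed points. For $H\neq e$ both sides are modules over $\Phi^H MU^{((G))}$, which is an $MO$- and hence $H\F_2$-algebra, so smashing with any $H\F_2$-acyclic $E$ annihilates them; for $H=e$ the map is the identity. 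Then the projection formula $(\mathrm{inf}_e^G(E)\otimes Y)^H\simeq E\otimes Y^H$ yields the fixed-point statement. Your idea is the same one, just packaged less cleanly.

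The real gap is in the second claim. You invoke a ``Mahowald--Rezk characterization of $L_n^f$ on \emph{fp} spectra'' to conclude $L_n^f\bpgm{m}^H\simeq L_n\bpgm{m}^H$, but there is no such theorem: that \emph{fp} spectra satisfy the telescope conjecture is precisely the Mahowald--Rezk \emph{conjecture}, and the paper presents this statement for the $\bpgm{m}^H$ as a \emph{corollary} of the theorem you are trying to prove, not as an input to it. Your subsequent attempt to propagate from $\bpgm{m}$ to arbitrary $MU^{((G))}$-modules via the slice tower is therefore both circular and, as you yourself acknowledge, not made rigorous. The paper's argument is far simpler and needs none of this machinery: the same geometric-fixed-points reduction shows that the map $\mathrm{inf}_e^G(L_n^f\mathbb S)\otimes M\to \mathrm{inf}_e^G(L_n\mathbb S)\otimes M$ is zero on $\Phi^H$ for $H\neq e$, so one only has to check it on underlying spectra. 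But $\mathrm{Res}^G_e M$ is an $MU$-module, and for $MU$-modules $L_n^f\simeq L_n$ is classical (both localizations are smashing and agree on $MU$). No \emph{fp} input, no slice tower, and no connectivity bookkeeping is required.
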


Mahowald--Rezk conjectured that all \emph{fp} spectra satisfy the conditions of the telescope conjecture, and this verifies their conjecture for the examples considered in this article.

\begin{corollary}
    For all $n\ge0$, $m\ge0$, and $H\subset G$, the map
    \[L_n^f\bpgm{m}^H\to L_n\bpgm{m}^H\]
    is an equivalence.
\end{corollary}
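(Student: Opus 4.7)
The corollary is a direct instance of Theorem~\ref{thm:introthmlocalizations}, and the only task is to verify that the hypothesis applies to $\bpgm{m}$. The plan is as follows.

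First I would record that $\bpgm{m}$ naturally carries the structure of an $MU^{(\!(G)\!)}$-module. By construction (Definition \ref{def:bpgm} and the twisted monoid ring recollection of \cite[Section 2]{HHR} reviewed in Section \ref{sec:filtration}), $\bpgm{m}$ is obtained from $BP^{(\!(G)\!)}$ as an equivariant quotient of the form $BP^{(\!(G)\!)}/(G\cdot \bar{t}_{m+1}^G, G\cdot \bar{t}_{m+2}^G,\ldots)$, and this quotient inherits a $BP^{(\!(G)\!)}$-module structure. Since we work $2$-locally and $BP^{(\!(G)\!)}$ is obtained from $MU^{(\!(G)\!)}_{(2)}$ via the Quillen idempotent, $BP^{(\!(G)\!)}$ is a retract of $MU^{(\!(G)\!)}$ as an $MU^{(\!(G)\!)}$-algebra, and so $\bpgm{m}$ lies in $\mathrm{Mod}(MU^{(\!(G)\!)})$.

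Having placed $\bpgm{m}$ in the category to which Theorem~\ref{thm:introthmlocalizations} applies, I would simply invoke the last sentence of that theorem with $M=\bpgm{m}$ to conclude that for every $H\subset G$ the canonical map
\[
L_n^f\bpgm{m}^H \longrightarrow L_n\bpgm{m}^H
\]
is an equivalence of spectra. There is no genuine obstacle here beyond the module-structure verification above; the real work is all contained in Theorem~\ref{thm:introthmlocalizations} itself (and ultimately in Theorem~\ref{thm:introthmfp}, which ensures that the fixed points are \emph{fp} spectra of type $h$ so that the Mahowald--Rezk telescope conclusion of Hahn--Wilson applies). If one wanted to be maximally careful, I would also remark that $L_n^f$ and $L_n$ commute with the formation of fixed points in the relevant range because $\bpgm{m}^H$ is bounded below with finitely generated homotopy groups in each degree, but this is subsumed by the cited theorem.
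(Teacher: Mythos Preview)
Your proposal is correct and matches the paper's approach: the corollary is simply the special case $M=\bpgm{m}$ of Theorem~\ref{thm:introthmlocalizations} (equivalently Corollary~\ref{cor:chromaticlocalizations}), once one notes that $\bpgm{m}$ is by construction an $MU^{(\!(G)\!)}$-module. One small correction to your parenthetical commentary: the telescope statement in Theorem~\ref{thm:introthmlocalizations} does not rely on Theorem~\ref{thm:introthmfp} or on the Hahn--Wilson result; it follows instead from the facts that underlying spectra of $MU^{(\!(G)\!)}$-modules are $MU$-modules (hence satisfy the telescope conjecture) and that the nontrivial geometric fixed points are $H\F_2$-modules (hence vanish after tensoring with $L_n^f\mathbb{S}$ or $L_n\mathbb{S}$).
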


Combined with the Hahn--Wilson result, Theorem \ref{thm:introthmlocalizations} yields the following strong completion statement for the $\bpgm{m}$'s.

\begin{theorem}\label{introthmborel}
    For all $i$ sufficiently large, the map
    \[\underline{\pi}_i\bpgm{m}\to\underline{\pi}_iF(EG_+,\bpgm{m})\]
    is an isomorphism of Mackey functors.
\end{theorem}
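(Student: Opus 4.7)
The plan is to reduce the Mackey functor statement to a levelwise convergence statement at each subgroup, then combine Hahn--Wilson convergence with Theorem \ref{thm:introthmlocalizations}. A map of Mackey functors is an isomorphism iff it is so on each value $G/H$, and since $G$ has finitely many subgroups, it suffices to produce an integer $N$ such that for all $H\subseteq G$ and $i\ge N$, the map $\pi_i\bpgm{m}^H\to\pi_i\bpgm{m}^{hH}$ is an isomorphism of abelian groups.

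Fix $H\subseteq G$. By Theorem \ref{thm:introthmfp}, $\bpgm{m}^H$ is fp of type $h$, and the Hahn--Wilson theorem \cite[Theorem 3.1.3]{hahnwilson} provides an $N_H$ with $\bpgm{m}^H\to L_h^f\bpgm{m}^H$ iso on $\pi_i$ for $i\ge N_H$. Theorem \ref{thm:introthmlocalizations} gives an equivalence $L_h^f\bpgm{m}^H\xrightarrow{\sim} L_h^f\bpgm{m}^{hH}$, so the commutative square
\[
\begin{tikzcd}
\bpgm{m}^H \arrow[r] \arrow[d] & L_h^f\bpgm{m}^H \arrow[d, "\simeq"] \\
\bpgm{m}^{hH} \arrow[r] & L_h^f\bpgm{m}^{hH}
\end{tikzcd}
\]
exhibits $\bpgm{m}^H\to\bpgm{m}^{hH}\to L_h^f\bpgm{m}^{hH}$ as a $\pi_i$-isomorphism for $i\ge N_H$. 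By two-out-of-three for homotopy isomorphisms, the theorem follows once we show the bottom arrow $\bpgm{m}^{hH}\to L_h^f\bpgm{m}^{hH}$ is itself iso on $\pi_i$ in high degrees; then setting $N=\max_H N_H$ over the finitely many subgroups closes the argument.

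The main obstacle is this last convergence for $\bpgm{m}^{hH}$, which is not a priori fp and typically has homotopy unbounded below via the homotopy fixed point spectral sequence. My plan is to verify a Mahowald--Rezk-style tensor criterion in high degrees: for any finite spectrum $K$ of type $h+1$, the equivalence $\bpgm{m}^{hH}\otimes K\simeq (\bpgm{m}\otimes K)^{hH}$ (valid since $K$ is finite) yields an HFPSS whose $E_2$-page $H^s(H;\pi_t(\bpgm{m}\otimes K))$ is concentrated in a strip of bounded $t$-degree, because Theorem \ref{thm:introthmfp} implies $\bpgm{m}^e$ is fp of type $h$ and hence $\pi_*(\bpgm{m}\otimes K)$ is finite. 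The total degree $t-s$ is therefore bounded above, forcing $\pi_i(\bpgm{m}^{hH}\otimes K)=0$ for $i\gg 0$. An adaptation of Hahn--Wilson's argument then upgrades this tensor bound to the desired convergence. If the Hahn--Wilson argument does not cleanly extend beyond the fp class, an alternative is to analyze the fiber $\mathrm{fib}(\bpgm{m}^H\to\bpgm{m}^{hH})$ directly via the Tate-square identification with $\mathrm{fib}(\Phi^H\bpgm{m}\to\bpgm{m}^{tH})$ and the slice filtration of $\bpgm{m}$, which is well-adapted to comparing genuine and homotopy fixed points.
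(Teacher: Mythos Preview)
Your setup through the commutative square is correct and matches the paper's approach exactly: both arguments reduce to showing that $\bpgm{m}^{hH}\to L_h^f\bpgm{m}^{hH}$ is a $\pi_i$-isomorphism for large $i$. The gap is in how you handle this last step. Your proposed ``adaptation of Hahn--Wilson'' is not justified: $\bpgm{m}^{hH}$ is typically unbounded below, so it is not \emph{fp}, and the Hahn--Wilson argument genuinely uses bounded-belowness. The observation that $\bpgm{m}^{hH}\otimes K$ has bounded-above homotopy for $K$ of type $h+1$ is correct but does not by itself force the localization map to be an isomorphism in high degrees; knowing the acyclics of $L_h^f$ vanish upstairs says nothing about what the localization does to the negative-degree tail. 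Your fallback via the Tate square and slice filtration is not a proof either.

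The paper's fix is short and avoids all of this: rather than trying to localize $\bpgm{m}^{hH}$ directly, one moves the localization \emph{inside} the homotopy fixed points. Using that $L_h^f$ is smashing together with the geometric-fixed-point argument behind Theorem~\ref{thm:introthmlocalizations} (namely that $\Phi^H MU^{(\!(G)\!)}$ is an $H\F_2$-algebra, so $\Phi^H$ of anything tensored with $L_h^f\mathbb{S}$ vanishes), one identifies $L_h^f(\bpgm{m}^{hH})$ with $(L_h^f\bpgm{m}^e)^{hH}$. Now the comparison $\bpgm{m}^{hH}\to (L_h^f\bpgm{m}^e)^{hH}$ is a map of homotopy fixed points induced by the underlying map $\bpgm{m}^e\to L_h^f\bpgm{m}^e$, which is a $\pi_i$-isomorphism for large $i$ by Hahn--Wilson applied to the underlying \emph{fp} spectrum. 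A straightforward homotopy fixed point spectral sequence comparison (your own HFPSS argument, repurposed) then finishes the proof. So the missing idea is precisely this interchange of $L_h^f$ with $F(EG_+,-)$, which is special to $MU^{(\!(G)\!)}$-modules.
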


In other words, the completion maps $\bpgm{m}^H\to \bpgm{m}^{hH}$ have bounded above fiber, for all subgroups $H\subset G$. This result complements existing results in the literature. The first author showed that when $m=\infty$, these completion maps are equivalences, i.e. $BP^{(\!(G)\!)}$ is Borel-complete \cite{carrickcofree}. Greenlees--Meier showed that when $G=C_2$, the map is an isomorphism for all $i\ge0$, so that $\bprn{m}$ is the connective cover of its Borel completion $F(E{C_2}_+,\bprn{m})$ \cite[Proposition 4.9]{GM}. We do not know if the analogue of the Greenlees--Meier result holds for the $\bpgm{m}$'s, but one has the following immediate corollary of Theorem \ref{introthmborel}.

\begin{corollary}
    For all $H\subset G$, the spectrum $\tau_{\ge0}(
    \bpgm{m}^{hH})$ is an fp spectrum of type $m|G|/2$.
\end{corollary}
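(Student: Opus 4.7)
The plan is to leverage Theorem \ref{introthmborel}: setting $X := \bpgm{m}^H$ and $Y := \bpgm{m}^{hH}$, the completion map $X \to Y$ is an isomorphism on $\pi_i$ for all $i \geq N$, for some $N$. Since $\bpgm{m}$ is slice-connective, $X$ is connective, so $X \to Y$ factors through $Y' := \tau_{\geq 0}(Y)$, giving a cofiber sequence $X \to Y' \to C$ in which $\pi_i C = 0$ for $i < 0$ and for $i \gg 0$. Because fp spectra form a thick subcategory, the strategy is to identify $C$ as fp (necessarily of type $\leq 0$), so that $Y'$ is fp; the fp type will then be shown to equal $h$ by a separate smashing argument.

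First I would verify that $\pi_i C$ is finitely generated over $\Z_{(2)}$ in each degree. The homotopy $\pi_* X$ is degreewise finitely generated since $X$ is fp (Theorem \ref{thm:introthmfp}), and $\pi_* Y$ is similarly degreewise finitely generated via strong convergence of the HFPSS $E_2^{s,t} = H^s(H;\, \pi_t \bpgm{m}) \Rightarrow \pi_{t-s} Y$: its $E_2$-page is degreewise finitely generated because $\pi_*\bpgm{m}$ is polynomial on finitely many generators per degree and $H$ is finite. The long exact sequence of the cofiber then delivers the same property for $\pi_* C$. A bounded spectrum with degreewise finitely generated $\Z_{(2)}$-module homotopy lies in the thick subcategory generated by $H\Z_{(2)}$ and $H\F_2$, hence is fp of type $\leq 0$. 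Thus $Y'$ is fp.

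To compute the type, smash the cofiber sequence with a finite type $h+1$ spectrum $V$: the term $X \otimes V$ has finite homotopy by Theorem \ref{thm:introthmfp}, and so does $C \otimes V$, since $V$ is $2$-torsion and $C$ is bounded with degreewise finite generation. Hence $Y' \otimes V$ has finite homotopy, and the fp type of $Y'$ is at most $h$. For the lower bound, the high-degree agreement of $X$ and $Y'$ forces any type-$h$ complex $V'$ witnessing non-finiteness of $X \otimes V'$ to likewise witness non-finiteness of $Y' \otimes V'$, so the type is exactly $h$. The main subtlety is the degreewise finite generation of $\pi_* Y$, which reduces to a standard convergence check for the HFPSS given the shape of $\pi_*\bpgm{m}$; everything else is a routine thick-subcategory argument.
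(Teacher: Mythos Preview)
Your approach is essentially the same as the paper's: both invoke Theorem~\ref{introthmborel} to see that the cofiber (or fiber) of $\bpgm{m}^H \to \tau_{\ge 0}(\bpgm{m}^{hH})$ is bounded, argue that it is \emph{fp} of type $\le 0$, and then conclude via the thick subcategory property of $\fp{h}$. The paper phrases this at the level of $G$-spectra, asserting without further comment that the fiber lies in the thick subcategory generated by $\{\mathrm{Ind}_H^G H\underline{\Z}\}_{H\subset G}$; you work one fixed-point spectrum at a time and spell out the upper and lower bounds on the \emph{fp} type more explicitly, which is fine.

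One technical caution on your HFPSS step: the implication ``degreewise finitely generated $E_2$ plus strong convergence $\Rightarrow$ degreewise finitely generated abutment'' is not valid in general. For a connective spectrum with nonvanishing homotopy in infinitely many degrees, infinitely many $E_\infty^{s,n+s}$ can contribute to $\pi_n$, and an inverse limit of finitely generated modules need not be finitely generated. You need an additional input such as an $E_\infty$ horizontal vanishing line, or an argument via geometric fixed points (each $\Phi^K\bpgm{m}$ is an $H\F_2$-module), to pin down finite generation of $\pi_*C$. The paper's proof is terse on exactly this point as well, so this is a shared technical omission rather than a divergence in strategy.
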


\subsection{Questions}
\begin{enumerate}

\item Are the \emph{fp} spectra $\bpgm{m}^H$ in the thick subcategory generated by $BP\ip{h}$? This would verify that the $\bpgm{m}^H$'s are consistent with the Hahn--Wilson conjecture discussed in \cite{piotrlee}. It seems plausible that the results of Section \ref{sec:algebraicstuff} can be used to show that $\bpgm{m}^e$ is a finite sum of shifts of forms of $BP\ip{h}$, which reduces the question to showing that $\bpgm{m}^H$ is in the thick subcategory generated by $\bpgm{m}^e$.

\item When $h\equiv 2$ mod $4$, the maximal finite $2$-subgroups of $\mathbb{S}_h$ are isomorphic to the quaternion group $Q_8$. This case is exceptional in that these are the only non-cyclic subgroups of $\mathbb{S}_h$ for any height $h$ and prime $p$. Are there suitable $Q_8$-equivariant quotients of $BP^{(\!(Q_8)\!)}$ that can be used to constrain the image of (\ref{eq:eulerbph}) at these heights?
\item There should be a suitable height filtration of a moduli stack of formal group laws equipped with group actions that encodes the results of Beaudry, the second author, Shi and Zeng in \cite{BHSZ}. The Gaussian binomial coefficients appearing in Section \ref{sec:algebraicstuff} should then give the degrees of the covers obtained by forgetting the group actions down to the classical height filtration. Is there a modular interpretation of these coefficients in this setting? See Example \ref{ex:formalgroupbpc4}.

\item Can one sharpen the constraints of Theorem \ref{thm:introthmmoore} at height $4$ using the periodicity of $\bpcm{2}$ determined by the second author, Shi, Wang, and Xu in \cite{hswx}? See Remark \ref{rmk:sharpness}.
\end{enumerate}

\section{The underlying homotopy of $\bpgm{m}$}\label{sec:algebraicstuff}
In this section, we prove a number of purely algebraic results about the underlying homotopy groups of $BP^{(\!(G)\!)}$ and $\bpgm{m}$. Since the results of this section concern only the underlying homotopy groups, we need only remind the reader that
\[\pi_*^e\bpgm{m}=\Z[G\cdot t_1^G,\ldots,G\cdot t_m^G],\]
where $|t_i^G|=2(2^i-1)$. 

\subsection{Regularity and finiteness} The underlying spectra of $BP^{(\!(G)\!)}$ and $\bpgm{m}$ are complex-oriented ring spectra, and work of Beaudry, the second author, Shi, and Wang gives recursive formulas for the image of the $v_i$ generators in $\pi_*^eBP^{(\!(G)\!)}$ \cite[Theorem 1.1]{BHSZ}. More generally, for any $H\subset G$, there are canonical maps of $H$-spectra
\[BP^{(\!(H)\!)}\to \mathrm{Res}^G_HBP^{(\!(G)\!)}\]
by properties of the norm functor, and they give recursive formulas for the effect of these maps on underlying homotopy groups. We use these to show the following.

\begin{theorem}\label{thm:mainalgebraicthm}
In the ring $\pi_*^eBP^{(\!(G)\!)}=\Z[G\cdot t_i^G| i\ge 1]$, the elements $t_1^G,\ldots,t_m^G$ are nilpotent mod the ideal 
\[I:=(2,v_1,\ldots,v_{h},G\cdot t_{m+1}^G,\ldots,G\cdot t_{h}^G).\]
\end{theorem}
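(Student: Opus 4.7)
The plan is to deduce nilpotency from the stronger claim that
\[
Q := \pi_*^e BP^{(\!(G)\!)}/I \cong \F_2[G\cdot t_1^G,\ldots,G\cdot t_m^G]/(v_1,\ldots,v_h)
\]
is a finite-dimensional $\F_2$-vector space. Since $Q$ is graded with each $t_i^G$ in strictly positive degree, finite-dimensionality immediately yields nilpotency. Numerically, the polynomial ring $R := \F_2[G\cdot t_1^G,\ldots,G\cdot t_m^G]$ has exactly $h = 2^{n-1}m$ generators (each $G$-orbit has cardinality $2^{n-1}$), matching the number of relations, and is Cohen--Macaulay of Krull dimension $h$. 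Thus finite-dimensionality of $Q$ is equivalent to $v_1, \ldots, v_h$ forming a homogeneous system of parameters, equivalently a regular sequence in $R$.

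My strategy is to verify this regularity by matching Poincar\'e series, with the Beaudry--Hill--Shi--Wang recursive formulas \cite{BHSZ} as the essential algebraic input; these express each $v_i$ in $\pi_*^e BP^{(\!(G)\!)}$ explicitly in terms of nested norms of the $t_j^G$'s along the chain $C_2 \subset C_4 \subset \cdots \subset G$. If $v_1, \ldots, v_h$ form a regular sequence, the Poincar\'e series of $Q$ must equal
\[
p_R(q) \cdot \prod_{i=1}^h (1-q^{|v_i|}) = \frac{\prod_{i=1}^h (1-q^{2(2^i-1)})}{\prod_{j=1}^m (1-q^{2(2^j-1)})^{2^{n-1}}},
\]
and after the $m$ obvious cancellations this simplifies to a rational expression that I would verify is a polynomial with non-negative integer coefficients, recognizable as a product of $q$-binomial coefficients whose value at $q=1$ is the Gaussian binomial expression hinted at in the excerpt (and manifestly odd, agreeing with Theorem~\ref{thm:introthmbpgmhomotopy}). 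Combined with the BHSZ formulas, which ensure that each $v_i$ is genuinely nonzero in the relevant subquotient of $R$, this Hilbert series identity forces $(v_1,\ldots,v_h)$ to cut out a zero-dimensional subscheme of $\Spec(R)$.

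I would organize the argument by induction on $n$. The base case $G = C_2$ is essentially trivial, since $\pi_*^e \BPR\langle m\rangle \cong \Z[v_1,\ldots,v_m]$ so $Q \cong \F_2$. For the inductive step, the BHSZ formulas compute the image of each $v_i$ under $\pi_*^e BP^{(\!(G')\!)} \to \pi_*^e BP^{(\!(G)\!)}$ together with a correction term involving the newly introduced norms at the $G$-level. The main obstacle is extracting, from the nested-norm form of these recursions, enough information to verify directly that each successive quotient $R/(v_1,\ldots,v_{i-1})$ admits $v_i$ as a non-zero-divisor: norms of sums do not decompose cleanly, and the combinatorics of interacting the $v_i$-image with the ideals $(v_1,\ldots,v_{i-1})$ across the subgroup chain is intricate. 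Controlling this bookkeeping should simultaneously produce the explicit Gaussian-binomial dimension count asserted in Corollary~\ref{cor:bpgmodd}.
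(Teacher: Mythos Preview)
Your logical flow is inverted relative to how the argument actually works. You propose to prove regularity of $(v_1,\ldots,v_h)$ in $R=\F_2[G\cdot t_1^G,\ldots,G\cdot t_m^G]$ ``by matching Poincar\'e series,'' but the Poincar\'e series identity you write down is a \emph{consequence} of regularity, not evidence for it. The formal rational function $p_R(q)\prod_i(1-q^{|v_i|})$ is always a polynomial with non-negative coefficients here (just from the degrees), regardless of whether the $v_i$'s are algebraically independent; for instance $(x^2,xy)$ in $\F_2[x,y]$ yields the polynomial $(1+q)^2$ but is not regular. Likewise, knowing that each $v_i$ is ``genuinely nonzero in the relevant subquotient'' is far weaker than knowing it is a non-zero-divisor there. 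So the Poincar\'e series step contributes nothing toward the theorem; all the content is in the direct verification you defer to the last paragraph and describe as the ``main obstacle.''

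The paper runs the implications in the opposite order: it proves nilpotency first, then deduces finiteness, then regularity (via a Krull-dimension count), and only then the Poincar\'e series. The key idea you are missing for the nilpotency step is a localization trick that sidesteps the non-zero-divisor bookkeeping entirely. After reducing by induction on $|G|$ and on $m$ to showing that $t_m^G$ is nilpotent mod $I$, one inverts $t_m^G$ and proves the resulting ring is zero. The BHSZ relation indexed by $2m$ gives $\gamma t_m^G\cdot (t_m^G)^{2^m}=0$ mod $I$, so $\gamma t_m^G=0$ after inverting $t_m^G$; a downward induction using the relations indexed by $m+i$ kills each $\gamma t_i^G$, and then the relations indexed by $i\le m$ force $t_i^G=0$ as well, so $t_m^G$ is both a unit and zero. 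This is short and avoids ever analyzing whether a particular $v_i$ is a non-zero-divisor modulo the previous ones.
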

\begin{proof}
We proceed by induction on $|G|$ and $m$. When $G=C_2$, we have 
\[t_i^G\equiv v_i\mod (2,v_1,\ldots,v_{i-1})\]
by \cite[Proposition 3.5]{BHSZ}, and the claim is trivial for all $m$. For any $G$, when $m=0$, there is nothing to prove. Now taking $m>0$ and $|G|>2$, we have by induction that the elements $t_1^{G'},\ldots,t_{2m}^{G'}$ are nilpotent in the ring $\Z[G'\cdot t_i^{G'}| i\ge 1]$ mod the ideal
\[(2,v_1,\ldots,v_{h},G'\cdot t_{2m+1}^{G'},\ldots,G'\cdot t_{h}^{G'})\]
The same is true in the ring $\Z[G\cdot t_i^{G}| i\ge 1]$ via the canonical $BP_*=\pi_*^eBP^{(\!(C_2)\!)}$-linear inclusion 
\[\pi_*^eBP^{(\!(G')\!)}=\Z[G'\cdot t_i^{G'}]\to \Z[G\cdot t_i^G]=\pi_*^eBP^{(\!(G)\!)}\]
Using the $k$-th relation in \cite[Theorem 1.1]{BHSZ} for the values $2m+1\le k\le h$, we see that the elements $t_{2m+1}^{G'},\ldots,t_{h}^{G'}$ are zero mod $I$. Hence by raising to powers, we may assume without loss of generality that the elements $t_1^{G'},\ldots,t_{2m}^{G'}$ are zero in the ring $\Z[G\cdot t_i^{G}]/I$.

By the inductive hypothesis on $m$, we have that the elements $t_1^{G},\ldots,t_{m-1}^G$ are nilpotent mod the ideal
\[(2,v_1,\ldots,v_{(m-1)|G|/2},G\cdot t_{m}^G,\ldots,G\cdot t_{(m-1)|G|/2}^G)\]
and therefore also mod the ideal
\[(2,v_1,\ldots,v_{h},G\cdot t_{m}^G,\ldots,G\cdot t_{h}^G)=I+(G\cdot t_m^G)\]
It will therefore suffice to show that $G\cdot t_m^G$ is nilpotent mod $I$, and since the ideal $(2,v_1,\ldots,v_{h})$ is invariant under the action of $G$ by \cite[Proposition 3.7]{BHSZ}, it suffices to show that $t_m^{G}$ is nilpotent mod $I$.

This is equivalent to showing that the localization $\Z[G\cdot t_i^G][(t_m^G)^{-1}]/I$ is the zero ring. We first show by downward induction on $i$ that $\gamma t_i^{G}=0\in \Z[G\cdot t_i^G][(t_m^G)^{-1}]/I$ for all $1\le i\le m$. For the base case $i=m$, the $2m$-th relation in \cite[Theorem 1.1]{BHSZ} gives us the equation
\[\gamma t_m^{G}\cdot (t_m^{G})^{2^m}=0\in \Z[G\cdot t_i^G]/I\]
which implies that $\gamma t_m^{G}=0\in \Z[G\cdot t_i^G][(t_m^G)^{-1}]/I$. For $i<m$, the $(m+i)$-th relation in \cite[Theorem 1.1]{BHSZ} gives us the equation
\[\gamma t_i^{G}\cdot (t_m^{G})^{2^i}+\gamma t_{i+1}^{G}\cdot (t_{m-1}^{G})^{2^{i+1}}+\cdots+\gamma t_m^{G}\cdot (t_i^{G})^{2^m}=0\in \Z[G\cdot t_i^G]/I\] Assuming by induction that $\gamma t_j^{G}=0\in \Z[G\cdot t_i^G][(t_m^G)^{-1}]/I$ for all $j>i$, we therefore have
\[\gamma t_i^{G}\cdot (t_m^{G})^{2^i}=0\in \Z[G\cdot t_i^G][(t_m^G)^{-1}]/I\]
which implies $\gamma t_i^{G}=0\in \Z[G\cdot t_i^G][(t_m^G)^{-1}]/I$.

Using this, a simple induction argument using the $i$-th relation in \cite[Theorem 1.1]{BHSZ} for $1\le i\le m$ now implies that $t_i^{G}=0\in \Z[G\cdot t_i^G][(t_m^G)^{-1}]/I$ for all $1\le i\le m$. In particular, $t_m^{G}$ is both a unit and zero in $\Z[G\cdot t_i^G][(t_m^G)^{-1}]/I$, so $\Z[G\cdot t_i^G][(t_m^G)^{-1}]/I$ is the zero ring.
\end{proof}

\begin{corollary}\label{cor:bpgmfinite}
    The ring \(\pi_*^e\bpgm{m}/(2,v_1,\ldots,v_{h})\) is finite.
\end{corollary}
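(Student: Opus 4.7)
The plan is to deduce the finiteness from Theorem \ref{thm:mainalgebraicthm} by recognizing the target ring as a finitely generated $\F_2$-algebra in which every polynomial generator is nilpotent.

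First I would identify the ring more concretely. Since killing $2$ makes it an $\F_2$-algebra, and by the definition of $\bpgm{m}$ we have
\[
\pi_*^e\bpgm{m}/(2,v_1,\ldots,v_h)=\F_2[G\cdot t_1^G,\ldots,G\cdot t_m^G]/(v_1,\ldots,v_h),
\]
which is a quotient of the polynomial ring on the finitely many elements in $G\cdot t_1^G,\ldots,G\cdot t_m^G$. Next I would exhibit a surjection from $\pi_*^e BP^{(\!(G)\!)}/I$ onto this ring, where $I$ is the ideal from Theorem \ref{thm:mainalgebraicthm}, simply by further quotienting by $G\cdot t_{h+1}^G,G\cdot t_{h+2}^G,\ldots$. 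Under this surjection, Theorem \ref{thm:mainalgebraicthm} guarantees that the images of $t_1^G,\ldots,t_m^G$ are nilpotent.

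The small subtlety is that we need \emph{all} generators in the orbits $G\cdot t_i^G$ (i.e., the conjugates $\gamma^j t_i^G$ for $1\le j\le 2^{n-1}-1$) to be nilpotent, not just $t_i^G$ itself. I would resolve this by noting that the ideal $I$ is $G$-invariant: the ideal $(2,v_1,\ldots,v_h)$ is $G$-invariant by \cite[Proposition 3.7]{BHSZ}, and the remaining generators $G\cdot t_{m+1}^G,\ldots,G\cdot t_h^G$ of $I$ form full $G$-orbits by construction. Thus nilpotence of $t_i^G$ modulo $I$ transports under $\gamma^j$ to nilpotence of $\gamma^j t_i^G$ modulo $I$.

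Finally, I would invoke the elementary observation that a finitely generated commutative $\F_2$-algebra in which every algebra generator is nilpotent is a finite-dimensional $\F_2$-vector space (monomials in the generators with bounded exponents span, and there are finitely many such monomials), hence a finite ring. This completes the proof. There is no serious obstacle here; the content is entirely contained in Theorem \ref{thm:mainalgebraicthm}, and this corollary is essentially a repackaging together with the $G$-invariance of the relevant ideal.
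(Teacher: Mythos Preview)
Your proposal is correct and follows essentially the same approach as the paper: both argue that the quotient is a finitely generated commutative $\F_2$-algebra whose generators are all nilpotent by Theorem~\ref{thm:mainalgebraicthm}, hence finite. You make explicit the $G$-invariance argument needed to pass from nilpotence of $t_i^G$ to nilpotence of all its conjugates $\gamma^j t_i^G$, a point the paper's one-line proof leaves implicit.
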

\begin{proof}
    This commutative ring is generated as an $\F_2$-algebra by the finite list of elements $G\cdot t_1^G,\ldots,G\cdot t_m^G$, all of which are nilpotent by the previous theorem.
\end{proof}

\begin{corollary}\label{cor:bpgmregularsequence}
    The sequence $(2,v_1,\ldots,v_{h})$ is regular in $\pi_*^e\bpgm{m}$.
\end{corollary}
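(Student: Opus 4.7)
The plan is to reduce the claim to a standard fact from commutative algebra: a homogeneous system of parameters in a graded Cohen--Macaulay ring is automatically a regular sequence. The key observation is that $\pi_*^e\bpgm{m} = \Z[G\cdot t_1^G,\ldots,G\cdot t_m^G]$ is a polynomial ring over $\Z$ on exactly $m\cdot 2^{n-1} = h$ homogeneous generators, so it is a regular (hence Cohen--Macaulay) graded ring of Krull dimension $h+1$.

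First, I would peel off the element $2$. Since $\Z[y_1,\ldots,y_h]$ is $\Z$-torsion-free, multiplication by $2$ is injective, and the quotient is $\F_2[y_1,\ldots,y_h]$. This reduces the problem to showing that $(v_1,\ldots,v_h)$ is a regular sequence in the Cohen--Macaulay graded ring $\F_2[y_1,\ldots,y_h]$, which has Krull dimension $h$.

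The quotient $\F_2[y_1,\ldots,y_h]/(v_1,\ldots,v_h)$ coincides with $\pi_*^e\bpgm{m}/(2,v_1,\ldots,v_h)$, which is finite by Corollary \ref{cor:bpgmfinite}. Since each $v_i$ is homogeneous of positive degree and together they cut out a zero-dimensional quotient of a graded ring of Krull dimension $h$, they form a homogeneous system of parameters. Invoking the Cohen--Macaulay property of the polynomial ring, the sequence $(v_1,\ldots,v_h)$ is regular in $\F_2[y_1,\ldots,y_h]$, and prepending the non-zerodivisor $2$ yields the desired regular sequence in $\pi_*^e\bpgm{m}$.

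There is no serious obstacle here: all of the heavy lifting was done by Theorem \ref{thm:mainalgebraicthm} and its consequence Corollary \ref{cor:bpgmfinite}. The only bookkeeping worth double-checking is the count that $\pi_*^e\bpgm{m}$ has precisely $h$ polynomial generators, which follows from $|G\cdot t_i^G|=2^{n-1}$ together with $h = m\cdot 2^{n-1}$.
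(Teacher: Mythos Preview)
Your proposal is correct and follows essentially the same approach as the paper: peel off $2$, reduce to the polynomial ring $\F_2[G\cdot t_1^G,\ldots,G\cdot t_m^G]$ of Krull dimension $h$, and use Corollary~\ref{cor:bpgmfinite} to conclude that $(v_1,\ldots,v_h)$ is a system of parameters, hence regular by the Cohen--Macaulay property. The only cosmetic difference is that the paper justifies the last step by embedding each quotient $\F_2[G\cdot t_i^G]/(v_1,\ldots,v_j)$ into the corresponding quotient of the power series ring $\F_2[[G\cdot t_i^G]]$ and invoking regularity of the complete local ring, whereas you invoke the graded Cohen--Macaulay statement directly; these are equivalent standard arguments.
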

\begin{proof}
    The element $2$ is a nonzero divisor in $\pi_*^e\bpgm{m}$, so we will show that $(v_1,\ldots,v_{h})$ is a regular sequence in
\[\pi_*^e\bpgm{m}/2=\F_2[G\cdot t_1^G,\ldots,G\cdot t_m^G]\]
This follows from the fact that the latter is a polynomial ring of Krull dimension $h$, and the quotient by the ideal $(v_1,\ldots,v_{h})$ gives a Krull dimension zero ring by Corollary \ref{cor:bpgmfinite}. Indeed, for each $i$, the ring $\F_2[G\cdot t_1^G,\ldots,G\cdot t_m^G]/(v_1,\ldots,v_i)$ is a natural subring of $\F_2[[G\cdot t_1^G,\ldots,G\cdot t_m^G]]/(v_1,\ldots,v_i)$, so the regularity claim follows from the corresponding one in $\F_2[[G\cdot t_1^G,\ldots,G\cdot t_m^G]]$, which is a complete Noetherian regular local ring of Krull dimension $h$, with each $v_i$ contained in the maximal ideal $\mathfrak{m}=(G\cdot t_1^G,\ldots,G\cdot t_m^G)$.
\end{proof}

\subsection{Poincaré series and dimension}
Our Corollary \ref{cor:bpgmfinite} implies that 
\[\pi_*^e\bpgm{m}/(2,v_1,\ldots,v_{h})\] is a finite dimensional $\F_2$-vector space. In fact, by using Poincaré series, we can compute its dimension explicitly in terms of Gaussian binomial coefficients.

\begin{definition}
   For a $\Z_{\ge0}$-graded $\F_2$-vector space $V_*$, let
    \[f_{V_*}(x)=\sum\limits_{n\ge0}(\dim_{\F_2}V_n) x^n\in\F_2[[x]]\]
    be its \emph{Poincaré series}. 
\end{definition}

For the Poincaré series we consider below, we work with graded vector spaces concentrated in even degrees, so we implicitly divide all degrees by 2. Note that this does not change the dimension of the underlying vector space.

\begin{proposition}\label{prop:poincarebpgm}
    The Poincaré series $f_m(x)$ of $\pi_*^e\bpgm{m}/(2,v_1,\ldots,v_{h})$ is given by
\[f_m(x)=\frac{\prod\limits_{i=1}^{h}1-x^{2^i-1}}{\bigg(\prod\limits_{i=1}^m1-x^{2^i-1}\bigg)^{|G|/2}}\]
\end{proposition}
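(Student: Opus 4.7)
The plan is to compute the Poincar\'e series by exploiting the fact, established in Corollary \ref{cor:bpgmregularsequence}, that $(2, v_1, \ldots, v_h)$ is a regular sequence on $\pi_*^e \bpgm{m}$. This reduces the computation to the routine one of Poincar\'e series of polynomial rings modulo regular sequences.

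First, I would reduce mod $2$ and record the Poincar\'e series of
\[
\pi_*^e\bpgm{m}/2 = \F_2[G\cdot t_1^G, \ldots, G\cdot t_m^G].
\]
Each orbit $G\cdot t_i^G$ consists of $|G|/2$ polynomial generators, each of (halved) degree $2^i - 1$, so the Poincar\'e series of this polynomial $\F_2$-algebra is
\[
\prod_{i=1}^m \frac{1}{(1 - x^{2^i - 1})^{|G|/2}}.
\]
Next, by Corollary \ref{cor:bpgmregularsequence}, the images of $v_1, \ldots, v_h$ form a regular sequence in $\pi_*^e\bpgm{m}/2$. Since $|v_i| = 2(2^i - 1)$, quotienting by a regular element of (halved) degree $2^i - 1$ multiplies the Poincar\'e series by $(1 - x^{2^i - 1})$. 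Applying this for $i = 1, \ldots, h$ yields
\[
f_m(x) = \frac{\prod_{i=1}^{h} (1 - x^{2^i - 1})}{\prod_{i=1}^m (1 - x^{2^i - 1})^{|G|/2}},
\]
as claimed.

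The only nontrivial inputs are already in hand: the explicit description of $\pi_*^e\bpgm{m}$ as a polynomial ring on the orbits $G\cdot t_i^G$, the degree formula $|t_i^G| = 2(2^i - 1)$ (so also $|v_i| = 2(2^i - 1)$ via the identification with images of the $v_i$), and the regularity of $(2, v_1, \ldots, v_h)$ established in the previous corollary. There is no real obstacle; the main substantive step was already carried out in proving Corollary \ref{cor:bpgmregularsequence}, and the present proposition is essentially an accounting of dimensions via Poincar\'e series once regularity is known.
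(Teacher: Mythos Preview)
Your proposal is correct and follows essentially the same approach as the paper: both use Corollary~\ref{cor:bpgmregularsequence} to pass from the polynomial ring $\F_2[G\cdot t_1^G,\ldots,G\cdot t_m^G]$ to the quotient by the regular sequence $(v_1,\ldots,v_h)$, tracking the effect on Poincar\'e series. The paper phrases this via a tensor decomposition $\F_2[G\cdot t_i^G]\cong Q\otimes_{\F_2}\F_2[v_1,\ldots,v_h]$ and then divides, while you phrase it as successive multiplication by $(1-x^{2^i-1})$; these are the same computation.
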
    
\begin{proof}
    By Corollary \ref{cor:bpgmregularsequence}, there is an isomorphism of graded $\F_2$-vector spaces
\[\F_2[G\cdot t_1^G,\ldots,G\cdot t_m^G]\cong \pi_*^e\bpgm{m}/(2,v_1,\ldots,v_{h})\otimes_{\F_2}\F_2[v_1,\ldots,v_{h}]\]
giving an equation
\[\bigg(\prod\limits_{i=1}^{m}\frac{1}{1-x^{2^i-1}}\bigg)^{|G|/2}=f_m(x)\cdot \prod\limits_{i=1}^{h}\frac{1}{1-x^{2^i-1}}\in \F_2[[x]],\]
and the claim is obtained by dividing in $\F_2[[x]]$.
\end{proof}

\begin{definition}
    Let 
    \[\binom{n}{m}_2=\frac{(1-2^n)(1-2^{n-1})\cdots(1-2^{n-m+1})}{(1-2)(1-2^2)\cdots(1-2^m)}\]
    be the Gaussian $q$-binomial coefficient with $q=2$.
\end{definition}

\begin{remark}
    For all $n\ge m$, $\binom{n}{m}_2$ is a positive integer, which is therefore odd as it divides the odd number $(1-2^n)(1-2^{n-1})\cdots(1-2^{n-m+1})$.
\end{remark}

\begin{corollary}\label{cor:bpgmodd}
    For all $m\ge0$, one has
    \[\dim_{\F_2}\big(\pi_*^e\bpgm{m}/(2,v_1,\ldots,v_{h})\big)=\prod\limits_{j=0}^{|G|/2-1}\binom{jm}{m}_2\]
    In particular, this $\F_2$-vector space has odd dimension.
\end{corollary}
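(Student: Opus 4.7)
The plan is to read off the dimension directly from the Poincar\'e series of Proposition \ref{prop:poincarebpgm} by evaluating at $x = 1$, and then recognize the resulting expression as a product of Gaussian binomial coefficients. By Corollary \ref{cor:bpgmfinite} this vector space is finite-dimensional, so its total dimension equals $f_m(1)$, viewed as a rational function with integer coefficients (lifted from $\F_2[[x]]$ to $\Z[[x]]$ via the evident graded structure).

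The numerator and denominator of the formula for $f_m(x)$ each contain exactly $mn$ factors of the form $1 - x^{2^i-1}$, where $n = |G|/2$. Applying the factorization $1 - x^k = (1-x)(1 + x + \cdots + x^{k-1})$ to each factor and cancelling the common $(1-x)^{mn}$, one obtains
\[
f_m(1) = \frac{\prod_{i=1}^{mn}(2^i-1)}{\left(\prod_{i=1}^m(2^i-1)\right)^n}.
\]
Grouping the numerator into $n$ consecutive blocks of length $m$ and pairing each block with one copy of $\prod_{k=1}^m(2^k-1)$ from the denominator rewrites this as
\[
f_m(1) = \prod_{j=1}^n \frac{\prod_{k=(j-1)m+1}^{jm}(2^k-1)}{\prod_{k=1}^m(2^k-1)} = \prod_{j=1}^n \binom{jm}{m}_2,
\]
which, after the index shift $j \mapsto j+1$, matches the claimed product.

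For the parity statement, each $\binom{jm}{m}_2$ is a positive integer dividing $\prod_{k=(j-1)m+1}^{jm}(2^k-1)$, a product of Mersenne-type numbers all of which are odd; hence every Gaussian binomial coefficient appearing in the product is odd, and therefore so is their product. There is no serious obstacle to this corollary: the substantive input is the Poincar\'e series of Proposition \ref{prop:poincarebpgm}, and the remaining argument is a routine $0/0$-cancellation at $x=1$ followed by algebraic bookkeeping.
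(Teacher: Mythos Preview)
Your argument is correct and takes essentially the same approach as the paper: both evaluate $f_m(1)$ by resolving the $0/0$ indeterminacy---the paper phrases this as l'H\^opital's rule applied to each factor $\tfrac{1-x^{2^{jm+i}-1}}{1-x^{2^i-1}}$, while you factor out $(1-x)$ explicitly---and then regroup the result into Gaussian binomial coefficients. One small caveat: your final ``index shift $j\mapsto j+1$'' does not literally recover the displayed product $\prod_{j=0}^{|G|/2-1}\binom{jm}{m}_2$; your computation (and the paper's own proof) actually yields $\prod_{j=1}^{|G|/2}\binom{jm}{m}_2$, so the mismatch is an indexing slip in the statement rather than a gap in your reasoning.
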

\begin{proof}
    We prove the claim via l'Hôpital's rule. We have
    \begin{align*}
        \lim_{x\to 1}f_m(x)&=\lim_{x\to 1}\frac{\prod\limits_{i=1}^{h}1-x^{2^i-1}}{\bigg(\prod\limits_{i=1}^m1-x^{2^i-1}\bigg)^{|G|/2}}\\
        &=\prod\limits_{j=0}^{|G|/2-1}\lim_{x\to 1}\prod\limits_{i=1}^m\frac{1-x^{2^{jm+i}-1}}{1-x^{2^i-1}}\\
        &=\prod\limits_{j=0}^{|G|/2-1}\prod\limits_{i=1}^m\lim_{x\to 1}\frac{1-x^{2^{jm+i}-1}}{1-x^{2^i-1}}\\
        &=\prod\limits_{j=0}^{|G|/2-1}\prod\limits_{i=1}^m\frac{1-2^{jm+i}}{1-2^i}\\
        &=\prod\limits_{j=0}^{|G|/2-1}\binom{jm}{m}_2
    \end{align*}
\end{proof}

\begin{example}\label{ex:formalgroupbpc4}
    In the case $G=C_4$, the ring $\pi_*^e\bpcm{m}/2$ is isomorphic to the quotient
    \[\F_2[\xi_1,\ldots,\xi_m]/(\zeta_{m+1},\ldots,\zeta_{2m})\]
    of the dual Steenrod algebra $\mathcal{A}_*$, where $\xi_i$ are the usual Milnor generators, and $\zeta_i$ are the Hopf conjugates. The group scheme $\Spec(\mathcal{A}_*)$ is the (strict) automorphism group of the additive formal group $\widehat{\mathbb{G}_a}$ over $\F_2$. Under this identification, $\Spec(\pi_*^e\bpcm{m}/2)$ is the subscheme consisting of those automorphisms of the form
    \[f(x)=x+\xi_1x^2+\cdots +\xi_mx^{2^m}\]
    with the property that $f^{-1}(x)$ has the same form 
    \[f(x)=x+\zeta_1x^2+\cdots +\zeta_mx^{2^m}.\]
    Corollary \ref{cor:bpgmodd} computes the dimension over $\F_2$ of the ring of functions on this scheme, and we speculate that the appearance of Gaussian binomial coefficients has a modular interpretation in these terms.
\end{example}

\section{A filtration for equivariant quotients}\label{sec:filtration}
We recall the twisted monoid construction $M/(G\cdot \bar{x})$ of the second author, Hopkins, and Ravenel, and we introduce the Koszul filtration on the fiber of the quotient map
\[
    M\to M/(G\cdot \bar{x}).
\]

\subsection{Equivariant quotients} 

\begin{definition}\label{def:equivariantcofiber}
    Let $\mathbb{S}[\bar{x}]$ be the free $\mathbb{E}_1$-algebra in $\Sp^{C_2}$ on a class $\bar{x}$ in degree $k\rho_2$. We use the notation
    \[\mathbb{S}[G\cdot\bar{x}]:=\begin{cases}N_{C_2}^G(\mathbb{S}[\bar{x}])& C_2\subset G\\\mathrm{Res}^{C_2}_e(\mathbb{S}[\bar{x}])&G=\{e\}\end{cases}\]
    Note that $\mathbb{S}[\{e\}\cdot\bar{x}]=\mathbb{S}[x]$ is the free $\mathbb{E}_1$-algebra in $\Sp$ on the underlying class $x:=\mathrm{Res}^{C_2}_e(\bar{x})$. Moreover, since $N_{C_2}^G$ is monoidal, $\mathbb{S}[G\cdot\bar{x}]$ is canonically an augmented $\mathbb{E}_1$-algebra in $\Sp^G$.
    
    For classes $\bar{x}_i$ in degrees $k_i\rho_2$, we define the augmented $\mathbb{E}_1$-algebra
    \[\mathbb{S}[G\cdot\bar{x}_1,\ldots,G\cdot\bar{x}_n]:=\bigotimes\limits_{i=1}^n\mathbb{S}[G\cdot\bar{x}_i],\]
    and for $M\in\mathrm{Mod}(\mathbb{S}[G\cdot\bar{x}_1,\ldots,G\cdot\bar{x}_n])$, we define
    \[M/(G\cdot\bar{x}_1,\ldots,G\cdot\bar{x}_n):=M\otimes_{\mathbb{S}[G\cdot\bar{x}_1,\ldots,G\cdot\bar{x}_n]}\mathbb{S}.\]
\end{definition}

\begin{example}\label{example:equivariantcofibers}
    When $G\subset C_2$, the quotient $M/(G\cdot\bar{x}_1,\ldots,G\cdot\bar{x}_n)$ is the ordinary (iterated) cofiber by the elements $\bar{x}_i$ when $G=C_2$, and $x_i:=\mathrm{Res}^{C_2}_e(\bar{x}_i)$ when $G=\{e\}$, respectively. More generally, it follows from the definitions that
    \[\mathrm{Res}^G_H(M/(G\cdot\bar{x}_1,\ldots,G\cdot\bar{x}_n))=M/(H\cdot g\bar{x}_1,\ldots,H\cdot g\bar{x}_n\mid gH\in G/H).\]
    For example, when $G=C_4$, the underlying $C_2$-spectrum of $M/(C_4\cdot\bar{x})$ is the ordinary (iterated) cofiber $\mathrm{Res}^{C_4}_{C_2}(M)/(\bar{x},\gamma\bar{x})$, where $\gamma$ is a generator of $C_4$.
\end{example}

It is shown in \cite[Section 5]{HHR} that there exist classes $\bar{t}_i^G\in\pi_{*\rho_2}^{C_2}BP^{(\!(G)\!)}$ such that
\[\pi_{*\rho_2}^{C_2}BP^{(\!(G)\!)}=\Z[G\cdot\bar{t}_1^G,G\cdot\bar{t}_2^G,\ldots],\]
and $\mathrm{Res}(\bar{t}_i^G)=t_i^G$, where the $t_i^G$'s are the underlying generators discussed in Section \ref{sec:algebraicstuff}.

\begin{definition}\label{def:bpgm}
    We define the $MU^{(\!(G)\!)}$-module
    \[\bpgm{m}:=\mathrm{colim}_kBP^{(\!(G)\!)}/(G\cdot\bar{t}_m^G,\ldots,G\cdot\bar{t}_{m+k}^G)\]
\end{definition}

\begin{remark}
    Note in particular that
    \(\bpgm{m-1}=\bpgm{m}/(G\cdot\bar{t}_m^G)\).
\end{remark}

\subsection{Norms of quotients via filtrations} 
The parametrized Day convolution developed by Nardin and Shah \cite[Section 3]{nardinshah} immediately gives the following facts about equivariant filtrations.

\begin{proposition}
    The categories $\mathrm{Fil}(\Sp^G)$ and $\mathrm{gr}(\Sp^G)$ of filtered and graded $G$-spectra respectively admit the structure of $G$-symmetric monoidal $\infty$-categories with the property that the functors
\begin{align*}
    \mathrm{colim}:\mathrm{Fil}(\Sp^G)&\to\Sp^G\\
    X^\bullet&\mapsto X^{-\infty}\\
    \mathrm{gr}:\mathrm{Fil}(\Sp^G)&\to\mathrm{gr}(\Sp^G)\\
    X^\bullet&\mapsto (\mathrm{cofib}(X^{i+1}\to X^i))_{i\in\Z}\\
    \mathrm{forget}:\mathrm{gr}(\Sp^G)&\to\Sp^G\\
    (X^i)_{i\in\Z}&\mapsto\bigoplus\limits_{i\in\Z}X^i
\end{align*}
have canonical $G$-symmetric monoidal structures.
\end{proposition}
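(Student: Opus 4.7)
The plan is to present filtered and graded $G$-spectra as parametrized functor categories and then to invoke the universal properties of parametrized Day convolution to obtain both the $G$-symmetric monoidal structures and the three functoriality statements at once.

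First I would realize
\[
\mathrm{Fil}(\Sp^G)\simeq \Fun(\Z^{op},\Sp^G),\qquad \mathrm{gr}(\Sp^G)\simeq \Fun(\Z^{\mathrm{disc}},\Sp^G),
\]
where $\Z^{op}$ and $\Z^{\mathrm{disc}}$ are viewed as symmetric monoidal $1$-categories under addition with trivial $G$-action, hence as constant $G$-symmetric monoidal $G$-$\infty$-categories. Applying the parametrized Day convolution of Nardin--Shah \cite{nardinshah} to these indexing categories, together with the $G$-symmetric monoidal structure on $\underline{\Sp}^G$ furnished by the Hill--Hopkins--Ravenel norms, yields canonical $G$-symmetric monoidal structures on both $\mathrm{Fil}(\Sp^G)$ and $\mathrm{gr}(\Sp^G)$. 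On underlying tensor products these recover the expected convolution formulas, and under a norm $N_H^G$ a class sitting in filtration degree $n$ of an underlying $H$-equivariant filtration is carried to one in filtration degree $|G/H|\cdot n$.

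Second, I would identify the first two functors of the statement as parametrized left Kan extensions along symmetric monoidal projections of indexing categories: the colimit $X^\bullet\mapsto X^{-\infty}$ is left Kan extension along $\Z^{op}\to\ast$, while the forgetful functor $(X^i)\mapsto\bigoplus_i X^i$ is left Kan extension along $\Z^{\mathrm{disc}}\to\ast$. By the universal property characterizing Day convolution, parametrized left Kan extension along a $G$-symmetric monoidal functor of indexing categories inherits a canonical $G$-symmetric monoidal structure, so both functors are $G$-symmetric monoidal. For the associated graded $\mathrm{gr}:\mathrm{Fil}(\Sp^G)\to\mathrm{gr}(\Sp^G)$, I would use the Rees deformation, exhibiting $\mathrm{gr}$ as the composite of a $G$-symmetric monoidal base change with the formation of a fiber. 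The content is the $G$-equivariant Day convolution identification
\[
\mathrm{gr}^n(X^\bullet\otimes Y^\bullet)\simeq \bigoplus_{i+j=n}\mathrm{gr}^i(X^\bullet)\otimes\mathrm{gr}^j(Y^\bullet),
\]
together with its norm analogue.

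The main obstacle I anticipate is this final norm compatibility: one must check that $N_H^G$ commutes with the formation of associated graded, which is not visible from the classical (non-parametrized) Day convolution. It should follow from the Nardin--Shah formalism because the parametrized norm functors on Day convolution are defined so as to distribute over the colimits and cofibers that compute $\mathrm{gr}$; once this is in place, the three $G$-symmetric monoidal structures on the listed functors are forced by universal properties, so the rest of the argument is formal.
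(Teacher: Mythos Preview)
Your proposal is correct and follows the same route as the paper: the paper does not give a proof at all, but simply records that the proposition follows immediately from the parametrized Day convolution of Nardin--Shah \cite[Section 3]{nardinshah}. Your sketch is a reasonable unpacking of what that citation entails, identifying $\mathrm{Fil}$ and $\mathrm{gr}$ as parametrized functor categories and the three functors as induced by symmetric monoidal maps of indexing diagrams; the only comment is that your worry about the norm/associated-graded compatibility is already absorbed into the Nardin--Shah formalism, so no separate argument is needed.
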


\begin{remark}
    The norm functors in $\mathrm{gr}(\Sp^G)$ are computed in terms of the norms in $\Sp^G$ using the fact that $\mathrm{forget}:\mathrm{gr}(\Sp^G)\to\Sp^G$ is $G$-symmetric monoidal and by use of the distributive laws of the second author, Hopkins, and Ravenel, which describe the behavior of the norm functors in $\Sp^G$ with respect to sums (see \cite[Proposition A.37]{HHR}).
\end{remark}

We begin by unpacking the distributive law of \cite[Proposition A.37]{HHR} in our case of interest, which yields a decomposition:

\begin{equation}\label{eq:distributive}
    N_{C_2}^{G}\big(\mathbb S\oplus E\big)\simeq \bigoplus_{f\in\Map^{C_2}\big(G,\{0,1\}\big)} E^{\otimes f^{-1}(1)},
\end{equation}
where $\{0,1\}$ has a trivial $C_2$-action, and $\Map^{C_2}(-,-)$ denotes $C_2$-equivariant maps. When we view this as a graded object, with \(\mathbb S\) in grading \(0\) and \(E\) in grading \(1\), then the grading of a summand associated to \(f\) is $\frac{1}{2}\sum_{g\in G} f(g)$.

In fact, the sum and the tensor appearing in (\ref{eq:distributive}) are  their indexed variants. For the sum, each of the $G$-conjugates of a $C_2$-equivariant function \(f\colon G\to \{0,1\}\) is grouped via induction in a single equivariant summand
\[\mathrm{Ind}_{H_f}^GE^{\otimes f^{-1}(1)},\]
where $H_f$ is the stabilizer of $f$ in $G$. For the tensor, factors are grouped via the norm.

\begin{definition}
    The indexed tensor summand associated to \(f\) is the \(H_f\)-equivariant spectrum
    \[
        E^{\otimes f^{-1}(1)}:=\bigotimes_{gH_f\in f^{-1}(1)/H_f} g\cdot N_{C_2}^{H_f}(E).
    \]
\end{definition}
  
When we apply this to \(E=\Sigma^{k\rho_2}\mathbb S^0[\bar{x}]\) for a \(C_2\)-equivariant class \(\bar{x}\) in degree $k\rho_2$, then this can be rewritten in the notation of Definition \ref{def:equivariantcofiber} as
\[
    \big(\Sigma^{k\rho_2}\mathbb S^0[\bar{x}]\big)^{\otimes f^{-1}(1)}=\Sigma^{n_f\cdot k\rho_{H_f}}\mathbb S^0[H_f\cdot g\bar{x}\mid gH_f\in f^{-1}(1)/H_f].
\]
where we set $n_f:=|f^{-1}(1)/H_f|$. Putting this all together, this gives us a description of the norm of the associated graded as a graded object.

\begin{proposition}
    The norm $N_{C_2}^G$ of the graded $C_2$-spectrum $\mathbb{S}\oplus\Sigma^{k\rho_2}\mathbb{S}[\overline{x}]$ with $\mathbb{S}$ in grading $0$ and $\Sigma^{k\rho_2}\mathbb{S}[\bar{x}]$ in grading $1$ is
    \[
        \bigoplus_{f\in\Map^{C_2}\big(G,\{0,1\}\big)/G} \Ind_{H_f}^{G} \Sigma^{n_f\cdot k\rho_{H_f}}\mathbb S^0[H_f\cdot g\bar{x}\mid gH_f\in f^{-1}(1)/H_f],
    \]
    with the summand associated to \(f\) in grading \(\tfrac{1}{2}|f^{-1}(1)|\).
\end{proposition}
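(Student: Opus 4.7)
The plan is to combine the formulas already laid out in the discussion preceding the proposition: the HHR distributive law \cite[Proposition A.37]{HHR}, its reformulation in terms of indexed coproducts and indexed tensors, and the explicit description of $(\Sigma^{k\rho_2}\mathbb{S}[\bar{x}])^{\otimes f^{-1}(1)}$ as a twisted monoid ring. Since $\mathrm{forget}\colon \mathrm{gr}(\Sp^G) \to \Sp^G$ is $G$-symmetric monoidal, applying $N_{C_2}^G$ to the graded $C_2$-spectrum $\mathbb{S} \oplus \Sigma^{k\rho_2}\mathbb{S}[\bar{x}]$ and forgetting the grading recovers (\ref{eq:distributive}) with $E = \Sigma^{k\rho_2}\mathbb{S}[\bar{x}]$. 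The proof therefore amounts to repackaging that decomposition equivariantly and reading off the grading.

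First, I would organize the sum in (\ref{eq:distributive}) by $G$-orbits of $C_2$-equivariant maps $f\colon G \to \{0,1\}$. The conjugates of a fixed $f$ assemble into a single equivariant summand $\Ind_{H_f}^G E^{\otimes f^{-1}(1)}$ with $H_f = \Stab_G(f)$, which is exactly the indexed-coproduct form of the distributive law. Into each such summand I would then substitute the identification
\[
    (\Sigma^{k\rho_2}\mathbb{S}^0[\bar{x}])^{\otimes f^{-1}(1)} = \Sigma^{n_f k\rho_{H_f}}\mathbb{S}^0[H_f\cdot g\bar{x}\mid gH_f\in f^{-1}(1)/H_f]
\]
spelled out above the proposition, which itself follows from the definition of the indexed tensor together with the compatibility of $N_{C_2}^{H_f}$ with the free $\mathbb{E}_1$-algebra construction on $\bar{x}$.

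Finally, for the grading, I would use that the tensor in $\mathrm{gr}(\Sp^G)$ adds gradings, so each factor of $E$ (living in grading $1$) contributes $1$ to the grading of the resulting summand. Because $\{0,1\}$ carries the trivial $C_2$-action, a $C_2$-equivariant $f$ factors through $G/C_2$, and the number of $E$-factors in the indexed tensor associated to $f$ is the number of cosets $gC_2 \in G/C_2$ on which $f$ takes the value $1$, which equals $\tfrac{1}{2}|f^{-1}(1)|$. The main delicate point is not any single deep step but the bookkeeping needed to match the $H_f$-equivariant structure on the indexed tensor with the twisted monoid ring presentation appearing in the proposition, and this ultimately reduces to unwinding the Nardin--Shah $G$-Day convolution and the distributive law at the level of graded objects.
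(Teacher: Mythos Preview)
Your proposal is correct and follows exactly the paper's own approach: the proposition is stated in the paper without a separate proof, as the summary ``Putting this all together'' of the preceding discussion (the distributive law, the grouping of summands into $G$-orbits via $\Ind_{H_f}^G$, the explicit form of the indexed tensor for $E=\Sigma^{k\rho_2}\mathbb{S}[\bar{x}]$, and the grading count $\tfrac{1}{2}|f^{-1}(1)|$). Your write-up simply makes that assembly explicit.
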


The $\mathbb{S}[G\cdot \overline{x}]$-module structure on the summand 
\[\Ind_{H_f}^{G} \mathbb S^0[H_f\cdot g\bar{x}\mid gH_f\in f^{-1}(1)/H_f]\]
is induced from  the $\mathrm{Res}^G_{H_f}\mathbb{S}[G\cdot \overline{x}]=\mathbb{S}[H_f\cdot g\overline{x}\mid gH_f\in G/H_f]$-module structure on $S^0[H_f\cdot g\bar{x}\mid gH_f\in f^{-1}(1)/H_f]$ furnished by the ring map
\[\mathbb{S}[H_f\cdot g\overline{x}\mid gH_f\in G/H_f]\to \mathbb{S}[H_f\cdot g\overline{x}\mid gH_f\in f^{-1}(1)/H_f],\]
which quotients out $(H_f\cdot g\overline{x}\mid gH_f\in f^{-1}(0)/H_f)$. Tensoring this filtration with an $\mathbb{S}[G\cdot\bar{x}]$-module $M$, this yields the following.

\begin{proposition}\label{prop:associatedgradedformula}
    Let $M$ be an $\mathbb{S}[G\cdot x]$-module with $|x|=k\rho_2$. Then $M$ admits a $\mathbb{S}[G\cdot x]$-linear filtration with associated graded
   \[
        \bigoplus_{f\in\Map^{C_2}\big(G,\{0,1\}\big)/G} \Ind_{H_f}^{G} \Sigma^{n_f\cdot k\rho_{H_f}}M/(H_f\cdot g\bar{x}\mid gH_f\in f^{-1}(0)/H_f),
    \]
    with the summand associated to \(f\) in grading \(\tfrac{1}{2}|f^{-1}(1)|\).
\end{proposition}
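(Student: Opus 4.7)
I would obtain the filtration of $M$ by tensoring the preceding proposition's filtration of $\mathbb{S}[G\cdot\bar{x}]$ against $M$ over the $\mathbb{E}_1$-algebra $\mathbb{S}[G\cdot\bar{x}]$. Concretely, start from the two-step augmentation-ideal filtration of $\mathbb{S}[\bar{x}]$ in $\mathrm{Fil}(\Sp^{C_2})$, namely $\Sigma^{k\rho_2}\mathbb{S}[\bar{x}]\xrightarrow{\bar{x}}\mathbb{S}[\bar{x}]$, whose associated graded is $\mathbb{S}\oplus\Sigma^{k\rho_2}\mathbb{S}[\bar{x}]$ with $\mathbb{S}$ in grading $0$. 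Applying $N_{C_2}^G$ using the $G$-symmetric monoidal structure on $\mathrm{Fil}(\Sp^G)$ yields a filtered $\mathbb{E}_1$-algebra whose colimit is $\mathbb{S}[G\cdot\bar{x}]$, and since $\mathrm{gr}$ is $G$-symmetric monoidal, its associated graded is precisely the direct sum identified in the preceding proposition.

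Next I would tensor this filtered $\mathbb{S}[G\cdot\bar{x}]$-algebra on the left with $M$ over $\mathbb{S}[G\cdot\bar{x}]$, producing a filtered object $M^{\bullet}$ in $\mathrm{Mod}(\mathbb{S}[G\cdot\bar{x}])$ whose colimit is $M$; this is an $\mathbb{S}[G\cdot\bar{x}]$-linear filtration by construction. Because the relative tensor product preserves cofiber sequences in each variable, $\mathrm{gr}$ commutes with it, giving
\[
\mathrm{gr}(M^{\bullet})\simeq \bigoplus_{f\in\Map^{C_2}(G,\{0,1\})/G} M\otimes_{\mathbb{S}[G\cdot\bar{x}]} \mathrm{Ind}_{H_f}^G \Sigma^{n_f k\rho_{H_f}}\mathbb{S}[H_f\cdot g\bar{x}\mid gH_f\in f^{-1}(1)/H_f],
\]
with the summand indexed by $f$ sitting in grading $\tfrac{1}{2}|f^{-1}(1)|$, inherited from the original filtration.

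It remains to identify each orbit summand. By the projection formula for the induction--restriction adjunction between module categories,
\[
M\otimes_{\mathbb{S}[G\cdot\bar{x}]} \mathrm{Ind}_{H_f}^G N \;\simeq\; \mathrm{Ind}_{H_f}^G\bigl(\mathrm{Res}^G_{H_f} M\otimes_{\mathrm{Res}^G_{H_f}\mathbb{S}[G\cdot\bar{x}]} N\bigr),
\]
and the discussion immediately preceding the proposition identifies the relevant $N$ as the quotient of $\mathrm{Res}^G_{H_f}\mathbb{S}[G\cdot\bar{x}]=\mathbb{S}[H_f\cdot g\bar{x}\mid gH_f\in G/H_f]$ by the generators indexed by $f^{-1}(0)/H_f$. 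Appealing to Example \ref{example:equivariantcofibers}, the bracketed relative tensor computes $\mathrm{Res}^G_{H_f} M/(H_f\cdot g\bar{x}\mid gH_f\in f^{-1}(0)/H_f)$, and inducing back up reproduces the stated formula.

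\textbf{Main obstacle.} The most delicate point is checking orbit by orbit that the $\mathbb{S}[G\cdot\bar{x}]$-module structure produced by the Nardin--Shah $G$-symmetric monoidal structure on $\mathrm{Fil}(\Sp^G)$ matches the one required by the projection formula and Example \ref{example:equivariantcofibers}. This follows from naturality of the distributive law (\ref{eq:distributive}) already used in the preceding proposition, but it has to be verified carefully; everything else is a formal consequence of the $G$-symmetric monoidality of $\mathrm{gr}$ and the cofiber-preservation of the relative tensor product.
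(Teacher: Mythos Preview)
Your proposal is correct and follows essentially the same approach as the paper: the paper's proof is the single sentence ``Tensoring this filtration with an $\mathbb{S}[G\cdot\bar{x}]$-module $M$, this yields the following,'' relying on the preceding proposition and the paragraph before it describing the module structure on each summand. You have simply unpacked that sentence, including the projection-formula step and the appeal to Example~\ref{example:equivariantcofibers}, so there is nothing to correct.
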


\begin{example}
    Let $M$ be an $\mathbb{S}[C_4\cdot \bar{x}]$-module with $|\bar{x}|=k\rho_2$. Then $M$ admits a $\mathbb{S}[C_{4}\cdot \bar{x}]$-linear filtration with associated graded
    \[
    \mathrm{gr}_m=
    \begin{cases}
        M/(C_{4}\cdot \bar{x})&m=0\\
        \mathrm{Ind}_{C_2}^{C_4}\Sigma^{k\rho_2}M/\bar{x}&m=1\\
        \Sigma^{k\rho_{4}}M&m=2
    \end{cases}
    \]
\end{example}

\begin{example}
    Let $M$ be an $\mathbb{S}[C_8\cdot \bar{x}]$-module with $|\bar{x}|=k\rho_2$. Then $M$ admits a $\mathbb{S}[C_{8}\cdot \bar{x}]$-linear filtration with associated graded
    \[
    \mathrm{gr}_m=
    \begin{cases}
        M/(C_{8}\cdot \bar{x})&m=0\\
        \mathrm{Ind}_{C_2}^{C_8}\Sigma^{k\rho_2}\frac{M}{(\bar{x},\gamma \bar{x},\gamma^2 \bar{x})}&m=1\\
        \mathrm{Ind}_{C_4}^{C_8}\Sigma^{k\rho_4}M/(C_4\cdot \bar{x})\oplus \mathrm{Ind}_{C_2}^{C_8}\Sigma^{2k\rho_2}\frac{M}{(\bar{x},\gamma \bar{x})}&m=2\\
        \mathrm{Ind}_{C_2}^{C_8}\Sigma^{3k\rho_2}M/(\bar{x})&m=3\\
        \Sigma^{k\rho_{8}}M&m=4
    \end{cases}
    \]
\end{example}

\subsection{Nilpotence on equivariant quotients} We will use the filtration of Proposition \ref{prop:associatedgradedformula} to prove a generalization to the equivariant setting of the following well-known fact.

\begin{lemma}\label{lem:x^2actsbyzero}
    Let $R$ be an $\mathbb{E}_1$-ring in a stably monoidal $\infty$-category $(\mathcal{C},\otimes,\mathbf{1})$ and let $x\in[\Sigma^k\mathbf{1},R]_{\mathcal{C}}$. Then $x^2$ acts by zero on $R/x$ in $\mathrm{Mod}_{\mathcal{C}}(R)$. 
\end{lemma}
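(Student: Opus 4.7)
The plan is to exhibit a canonical null-homotopy of the left-multiplication-by-$x^2$ map $L_{x^2}\colon \Sigma^{2k}(R/x) \to R/x$ in $\mathrm{Mod}_{\mathcal{C}}(R)$, using the cofiber sequence defining $R/x$ together with the $\mathbb{E}_1$-structure on $R$. First, I would use the fact that $R/x = \mathrm{cofib}\bigl(\Sigma^k R \xrightarrow{\mu_x^r} R\bigr)$ in $\mathrm{Mod}_{\mathcal C}(R)$, where $\mu_x^r$ denotes right multiplication by $x$ (which is left $R$-linear by associativity of the $\mathbb{E}_1$-multiplication on $R$). By the universal property of this cofiber, there is an equivalence of mapping spaces
\[
    \mathrm{Map}_{\mathrm{Mod}_{\mathcal C}(R)}(R/x, M) \simeq \mathrm{fib}\bigl(M \xrightarrow{L_x^M} \Sigma^{-k}M\bigr),
\]
so that an $R$-module map $R/x \to M$ is equivalent to the data of an element $\alpha \in \pi_0 M$ together with a null-homotopy of $L_x^M(\alpha) \in \pi_k M$.

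Under this equivalence, the map $L_{x^2}^{R/x}\colon R/x \to \Sigma^{-2k}(R/x)$ corresponds to $\alpha = p(x^2) \in \pi_{2k}(R/x)$ equipped with a null-homotopy of $L_x(\alpha) = p(x^3)$. The key observation is that $x^2$ itself factors as the composition
\[
    \Sigma^{2k}\mathbf{1} \xrightarrow{x} \Sigma^k R \xrightarrow{\mu_x^r} R,
\]
so that the cofiber null-homotopy $p \circ \mu_x^r \simeq 0$ produces a canonical witness of $p(x^2) = 0$. Combining with the analogous factorization $x^3 = \mu_x^r \circ x^2$ yields a canonical null-homotopy of $p(x^3)$, and together these assemble into an element of $\pi_0\mathrm{fib}(L_x^{\Sigma^{-2k}R/x})$ representing $L_{x^2}^{R/x}$.

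The main technical step I expect to be the hardest is the final identification: one must verify that the null-homotopy arising from $\mathbb{E}_1$-associativity coincides with the trivial null-homotopy that marks the basepoint of the fiber. I would approach this compatibility statement by reducing to the universal case $R = \mathbb{S}[x]$, where $R/x = \mathbf{1}$ and the augmentation forces the comparison to hold on the nose, and then transferring along the canonical $\mathbb{E}_1$-algebra map $\mathbb{S}[x] \to R$ determined by $x$; this is the essential homotopical content of the statement and ensures the constructed null-homotopy indeed exhibits $L_{x^2}^{R/x}$ as the zero map.
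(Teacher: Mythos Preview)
Your approach is considerably more involved than the paper's, and the step you flag as hardest is exactly where it stalls. The paper gives a five-line diagram chase: writing the cofiber sequence as $R\xrightarrow{p}R/x\xrightarrow{\delta}\Sigma R$ and stacking three copies with the vertical ``multiplication by $x$'' maps, one observes that the induced map $\phi\colon R/x\to R/x$ lifts along $p$ (because $\delta\circ\phi$ equals $(\Sigma x)\circ\delta$, two consecutive maps in the rotated cofiber sequence) and also extends along $\delta$ (because $\phi\circ p$ equals $p\circ x=0$). Hence $\phi^2$ factors through $\delta\circ p=0$. No mapping-space bookkeeping or coherence of null-homotopies is needed.

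Your reduction to $\mathbb{S}[x]$ does not clearly buy anything: the statement ``$x^2$ acts by zero on $\mathbb{S}[x]/x\simeq\mathbf{1}$ in $\mathrm{Mod}_{\mathbb{S}[x]}$'' is the same lemma specialized to $R=\mathbb{S}[x]$, and your assertion that ``the augmentation forces the comparison to hold on the nose'' conflates two different maps. The action of $x$ on $\mathbf{1}$ \emph{via the augmentation algebra structure} is visibly zero, but what the lemma concerns is the $R$-linear endomorphism induced on the cofiber, and identifying the two (or showing that the relevant one base-changes correctly) is precisely the coherence you set out to prove. Indeed, the single power $x$ need not act by zero on $R/x$ (multiplication by $2$ on $\mathbb{S}/2$ is nonzero), so any argument that would equally well show ``$x$ acts by zero'' in the universal case must be transporting the wrong map. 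The paper's two-factorization trick is what distinguishes $x^2$ from $x$ and sidesteps the coherence issue entirely; your outline never supplies a replacement for this.
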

\begin{proof}
    Consider the diagram
    \[
    \begin{tikzcd}
        R\arrow[r]\arrow[d,"x"]&R/x\arrow[r]\arrow[d,"x"]\arrow[dl,dashed]&\Sigma R\arrow[d,"x"]\\
        R\arrow[r]\arrow[d,"x"]&R/x\arrow[r]\arrow[d,"x"]&\Sigma R\arrow[d,"x"]\arrow[dl,dashed]\\
        R\arrow[r]&R/x\arrow[r]&\Sigma R
    \end{tikzcd}
    \]
    in $\mathrm{Mod}_{\mathcal{C}}(R)$, where the rows are cofiber sequences, and we suppress shifts by $k$ for simplicity. The dashed arrows exist by universal property of the cofiber, and they imply that the composite in the middle column factors through the composite in the middle row, which is zero as it is a cofiber sequence.
\end{proof}

We generalize this to the equivariant setting by replacing cofibers $R/x$ with equivariant quotients $R/(G\cdot \bar{x})$.

\begin{proposition}\label{prop:nilpotenceequivariantcofiber}
    Let $R$ be a $G$-$\mathbb{E}_\infty$-ring with $\bar{x}_1,\ldots,\bar{x}_n\in\pi_{*\rho_2}^{C_2}R$, and let $M$ be an $R$-module. Then $N_{C_2}^H(\bar{x}_i)$ acts nilpotently on $\mathrm{Res}^G_H(M/(G\cdot \bar{x}_1,\ldots,G\cdot \bar{x}_n))$ in $\mathrm{Mod}_{\Sp^H}(\mathrm{Res}^G_HR)$ for all $C_2\subset H\subset G$ and $1\le i\le n$.
\end{proposition}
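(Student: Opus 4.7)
The plan is to induct on $|G|$. The base case $|G|=2$ is immediate: then $H = G = C_2$, $N_{C_2}^H \bar{x}_i = \bar{x}_i$, and $M/(G\cdot \bar{x}_1, \ldots, G\cdot \bar{x}_n)$ is an iterated ordinary cofiber, so Lemma \ref{lem:x^2actsbyzero} applied iteratively gives $\bar{x}_i^2 = 0$ there. For the inductive step when $H \subsetneq G$, Example \ref{example:equivariantcofibers} identifies $\mathrm{Res}^G_H(M/(G\cdot \bar{x}_1, \ldots, G\cdot \bar{x}_n))$ with the $H$-equivariant quotient $M|_H/(H \cdot g\bar{x}_j \mid gH \in G/H,\, 1 \le j \le n)$. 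The inductive hypothesis, applied to the $H$-$\mathbb{E}_\infty$ ring $R|_H$ and module $M|_H$ equipped with the collection of classes above (taking the ambient group and its subgroup both to be $H$), immediately yields the desired nilpotence of $N_{C_2}^H(\bar{x}_i)$.

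The main case is $H = G$. Fix $i$, let $M' := M/(G\cdot \bar{x}_1, \ldots, \widehat{G\cdot \bar{x}_i}, \ldots, G\cdot \bar{x}_n)$ viewed as an $\mathbb{S}[G\cdot \bar{x}_i]$-module, and write $y := N_{C_2}^G(\bar{x}_i)$. Proposition \ref{prop:associatedgradedformula} equips $M'$ with a decreasing filtration $F^\bullet M'$ in which $F^{|G|/2} M' = \Sigma^{k\rho_G} M'$ is included into $F^0 M' = M'$ via multiplication by $y$, and whose bottom quotient is $F^0/F^1 = M/(G\cdot \bar{x}_1, \ldots, G\cdot \bar{x}_n)$. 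Setting $Q := M'/F^{|G|/2} M'$, which is the ordinary cofiber of $y$ on $M'$, Lemma \ref{lem:x^2actsbyzero} (which holds in any stable monoidal $\infty$-category, applied here in $G$-spectra) yields $y^2 = 0$ on $Q$.

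Each intermediate graded piece $F^j/F^{j+1}$ for $0 < j < |G|/2$ is a sum of induced pieces $\mathrm{Ind}_{H_f}^G X$ with $H_f \subsetneq G$ and $X$ an $H_f$-equivariant quotient of $M'|_{H_f}$ in which $g\bar{x}_i$ has been killed for each $gH_f \in f^{-1}(0)/H_f$. The inductive hypothesis applied to $H_f$ (with $|H_f| < |G|$) shows that each $N_{C_2}^{H_f}(g\bar{x}_i)$ for $gH_f \in f^{-1}(0)/H_f$ acts nilpotently on $X$. Since $\mathrm{Res}^G_{H_f}(y) = \prod_{gH_f \in G/H_f} N_{C_2}^{H_f}(g\bar{x}_i)$, since $f$ is non-constant (so $f^{-1}(0)$ is nonempty), and since all factors commute in the graded-commutative ring $\pi_*^{H_f}(R|_{H_f})$, the restricted product is nilpotent on $X$. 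Frobenius reciprocity then identifies the $R$-action of $y$ on $\mathrm{Ind}_{H_f}^G X$ with the induction of multiplication by $\mathrm{Res}^G_{H_f}(y)$ on $X$, so $y$ is nilpotent on each such middle summand. The standard extension principle (in a cofiber sequence $A \to B \to C$ in a stable $\infty$-category, nilpotence of a self-map on any two of the three terms forces nilpotence on the third) applied iteratively along the filtration of $F^1 M'/F^{|G|/2} M'$ by its middle graded pieces gives nilpotence of $y$ on $F^1 M'/F^{|G|/2} M'$, and one final extension using the cofiber sequence $F^1 M'/F^{|G|/2} M' \to Q \to M/(G\cdot \bar{x}_1, \ldots, G\cdot \bar{x}_n)$ together with $y^2 = 0$ on $Q$ completes the argument.

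The main obstacle is the Frobenius reciprocity step: one must verify that the $R$-action of $y$ on an induced summand $\mathrm{Ind}_{H_f}^G X$ really coincides with the induction of multiplication by $\mathrm{Res}^G_{H_f}(y)$ on $X$. This requires that $\mathrm{Ind}_{H_f}^G \dashv \mathrm{Res}^G_{H_f}$ extends to an adjunction between module categories over the $G$-$\mathbb{E}_\infty$-ring $R$, together with the fact that for abelian $G$ the double coset decomposition $\mathrm{Res}^G_{H_f}\mathrm{Ind}_{H_f}^G X \simeq \bigoplus_{gH_f \in G/H_f} X$ involves no conjugation twists on the summands.
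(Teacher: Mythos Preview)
Your proof is correct and follows essentially the same route as the paper's: induction on $|G|$ using the filtration of Proposition~\ref{prop:associatedgradedformula} together with the extension principle, with nilpotence on the induced middle graded pieces coming from the inductive hypothesis (via the product decomposition of $\mathrm{Res}^G_{H_f}N_{C_2}^G(\bar{x}_i)$ and Frobenius reciprocity) and on the ordinary cofiber $M'/N_{C_2}^G(\bar{x}_i)$ from Lemma~\ref{lem:x^2actsbyzero}. The paper reduces to $n=1$ at the outset via the tensor decomposition $M/(G\cdot\bar{x}_1,\ldots,G\cdot\bar{x}_n)\simeq M/(G\cdot\bar{x}_1,\ldots,G\cdot\bar{x}_{n-1})\otimes_R R/(G\cdot\bar{x}_n)$ rather than introducing your $M'$, and is terser about the Frobenius-reciprocity step you spell out, but the arguments otherwise coincide.
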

\begin{proof}
    We proceed by induction on $n$ and $|G|$. For induction on $n$, we use the fact that
\[M/(G\cdot \bar{x}_1,\ldots, G\cdot \bar{x}_n)=M/(G\cdot \bar{x}_1,\ldots,G\cdot \bar{x}_{n-1})\otimes_R R/(G\cdot \bar{x}_n)\]
and that if $N_{C_2}^H(\bar{x}_n)$ acts nilpotently on $M$, then it acts nilpotently on the tensor product
\[M/(G\cdot \bar{x}_1,\ldots,G\cdot \bar{x}_{n-1})=M\otimes_RR/(G\cdot \bar{x}_1,\ldots,G\cdot \bar{x}_{n-1})\]
by functoriality. We may therefore take $n=1$.

When $G=C_2$, this is Lemma \ref{lem:x^2actsbyzero}, using the identification of Example \ref{example:equivariantcofibers}. We may therefore take $|G|>2$ and assume the result for all $C_2\subset H\subsetneq G$ and all $n$. For any such subgroup $H$, we have $\mathrm{Res}^G_H (M/(G\cdot \bar{x}))=M/(H\cdot g\bar{x}\mid gH\in G/H)$. It follows then by the inductive hypothesis that $N_{C_2}^H(\bar{x})$ acts nilpotently on $\mathrm{Res}^G_H (M/(G\cdot \bar{x}))$.

    It remains to show that $N_{C_2}^G(\bar{x})$ acts nilpotently on $M/(G\cdot \bar{x})$. The filtration of Proposition \ref{prop:associatedgradedformula} factors the multiplication map
    \[N_{C_2}^G(\bar{x}):M\to M\]
    into a composite of $2^{n-1}$ maps, such that the cofiber of the $m$-th map is given by $\mathrm{gr}_m$. It follows that $M/(G\cdot \bar{x})$ is in the thick subcategory in $\mathrm{Mod}(R)$ spanned by $M/N_{C_2}^G(\bar{x})$ and the $\mathrm{gr}_m$'s for $0<m<2^n$. The class $N_{C_2}^G(\bar{x})^2$ acts by zero on the ordinary cofiber $M/N_{C_2}^G(\bar{x})$ by Lemma \ref{lem:x^2actsbyzero}, so it suffices to show that $N_{C_2}^G(\bar{x})$ acts nilpotently on each of the $\mathrm{gr}_m$'s for $0<m<2^n$. In this case, up to shifts, $\mathrm{gr}^m$ is of the form $\mathrm{Ind}_H^G\mathrm{Res}^G_H(M)/(H\cdot S)$, for $H$ a proper subgroup, and $S$ a set of variables containing $G/H$ conjugates of $\bar{x}$. This therefore follows by the inductive hypothesis.
\end{proof}

With an eye toward applying our Theorem \ref{thm:mainalgebraicthm}, we will need a similar nilpotence statement replacing $x_i$ by any element in the augmentation ideal.

\begin{corollary}\label{cor:nilpotentfromaugideal}
    Let $M\in\mathrm{Mod}_{\Sp^G}(R)$, and let $\bar{x}$ be any class in the ideal
    \[(G\cdot\bar{x}_1,\ldots,G\cdot\bar{x}_n)\subset\pi_{*\rho}^{C_2}R\] generated by the classes $g\bar{x}_j$ for all $1\le j\le n$ and all $g\in G/C_2$. Then $N_{C_2}^H(\bar{x})$ acts nilpotently on $\mathrm{Res}^G_HM/(G\cdot \bar{x}_1,\ldots,G\cdot \bar{x}_n)$ in $\mathrm{Mod}_{\Sp^H}(\mathrm{Res}^G_HR)$ for all $C_2\subset H\subset G$.
\end{corollary}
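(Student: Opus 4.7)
The plan is to reduce the corollary to Proposition~\ref{prop:nilpotenceequivariantcofiber} by writing $\bar x$ as a finite sum and expanding $N_{C_2}^H(\bar x)$ via the distributive law of \cite[Proposition A.37]{HHR}. Since
\[
    M/(G\cdot \bar{x}_1,\ldots,G\cdot \bar{x}_n) \simeq M\otimes_R R/(G\cdot \bar{x}_1,\ldots,G\cdot \bar{x}_n),
\]
and nilpotence of multiplication on one tensor factor gives nilpotence on the tensor product, I reduce to the case $M = R$ and set $Q := R/(G\cdot \bar{x}_1,\ldots,G\cdot \bar{x}_n)$.

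Next, write $\bar x = \sum_{j=1}^r a_j z_j$ with $a_j \in \pi_{*\rho_2}^{C_2} R$ and each $z_j \in \{g \bar{x}_i \mid g \in G/C_2,\ 1 \leq i \leq n\}$. For a single summand ($r=1$), multiplicativity of the norm gives $N_{C_2}^H(az) = N_{C_2}^H(a)\cdot N_{C_2}^H(z)$, and the second factor is nilpotent on $\Res^G_H Q$ by Proposition~\ref{prop:nilpotenceequivariantcofiber}; the product is therefore nilpotent, since both factors commute in the graded-commutative ring $\pi_*^H R$ (all relevant classes lying in even underlying degree). For general $r$, the distributive law writes $N_{C_2}^H(\bar x)$ as a finite sum, indexed by $H$-orbits $[\phi]$ of functions $\phi\colon H/C_2 \to \{1,\ldots,r\}$, of terms of the form $\mathrm{tr}_{H_\phi}^H N_{C_2}^{H_\phi}(w_\phi)$, where $H_\phi$ is the stabilizer of $\phi$ and $w_\phi$ is a monomial in the $a_j$'s and in conjugates $g z_j$. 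Since each indexed factor $a_j z_j$ contributes at least one generator, every $w_\phi$ contains at least one factor from $\{g z_j\}$, so by multiplicativity $N_{C_2}^{H_\phi}(w_\phi)$ factors through $N_{C_2}^{H_\phi}(g z_j)$, nilpotent on $\Res^G_{H_\phi} Q$ by Proposition~\ref{prop:nilpotenceequivariantcofiber}.

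The hard part will be showing that transfers of nilpotent elements remain nilpotent on the quotient, as transfers are not multiplicative. I propose iterated Frobenius reciprocity: for $b \in \pi_*^K R$ satisfying $b^N = 0$ on $\Res^G_K Q$, one has
\[
    \mathrm{tr}_K^H(b)^n = \mathrm{tr}_K^H\!\Bigl(b \cdot \bigl(\mathrm{res}^H_K \mathrm{tr}_K^H(b)\bigr)^{n-1}\Bigr) = \mathrm{tr}_K^H\!\Bigl(b \cdot \bigl(\textstyle\sum_{gK \in H/K} g_* b\bigr)^{n-1}\Bigr),
\]
using the double coset formula, which applies because $G$ is abelian and $K$ is therefore normal in $H$. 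Each conjugate $g_* b$ is again nilpotent of index at most $N$ on $\Res^G_K Q$ (the $g$-action is a ring automorphism), and the $g_* b$'s commute. For $n \ge (N-1)|H/K| + 1$, a pigeonhole argument shows that every monomial in the $g_* b$'s of length $n$ has some element appearing with multiplicity at least $N$, so it vanishes on $\Res^G_K Q$. Hence the argument of the transfer is zero, and a final application of Frobenius reciprocity ($\mathrm{tr}_K^H(0)\cdot y = \mathrm{tr}_K^H(0\cdot \mathrm{res}^H_K y) = 0$) shows that $\mathrm{tr}_K^H(b)^n$ acts as zero on $\Res^G_H Q$.

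Each summand in the distributive expansion of $N_{C_2}^H(\bar x)$ is therefore nilpotent on $\Res^G_H Q$, and all these summands commute in the graded-commutative ring $\pi_*^H R$. A finite sum of commuting nilpotents is nilpotent, so $N_{C_2}^H(\bar x)$ itself acts nilpotently on $\Res^G_H M/(G\cdot\bar x_1,\ldots,G\cdot\bar x_n)$, completing the proof.
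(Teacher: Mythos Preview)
Your argument is correct and follows exactly the paper's approach: expand $N_{C_2}^H(\bar x)$ via the distributive law into a sum of transfers of norms of monomials each containing a generator, and invoke Proposition~\ref{prop:nilpotenceequivariantcofiber} term by term. The paper's proof is a three-line sketch that simply asserts each summand acts nilpotently; your Frobenius reciprocity and pigeonhole argument for transfers, together with the commutativity observation for the final sum, is exactly the work needed to justify what the paper leaves implicit.
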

\begin{proof}
    The class $\bar{x}$ can be written as a linear combination of monomials in the generators $g \bar{x}_j$. Applying the norm $N_{C_2}^H(-)$ to this sum yields a linear combination of transfers of norms of monomials in the generators $g \bar{x}_j$. This concludes the proof as, by Proposition \ref{prop:nilpotenceequivariantcofiber}, each term in this sum acts nilpotently on $\mathrm{Res}^G_HM/(G\cdot \bar{x}_1,\ldots,G\cdot \bar{x}_n)$. 
\end{proof}

\subsection{Thick subcategories} For a $G$-$\mathbb{E}_\infty$-ring $R$ and an $R$-module $M$, we use the notation
\[\mathrm{Thick}^R(M)\]
to denote the thick subcategory of $\mathrm{Mod}_{\Sp^G}(R)$ generated by $M$.

\begin{proposition}\label{prop:thicksubcatsprop}
Let $R\in Sp^{G}$ be a $G$-$\mathbb{E}_\infty$-ring and $M\in\mathrm{Mod}_{\Sp^G}(R)$. Suppose $\bar{x}_1,\ldots,\bar{x}_n\in \pi_\star^{C_2}R$ have the property that the maps
\[N_{C_2}^H(\bar{x}_i):\Sigma^{|N_{C_2}^H(\bar{x}_i)|}M\to M\]
are nilpotent in $\mathrm{Mod}_{\Sp^H}(\mathrm{Res}^G_HR)$ for all $1\le i\le n$ and $C_2\subset H\subset G$. Then
\[M\in \mathrm{Thick}^R_{H\subset G}\big(G/H_+\otimes M/(G\cdot \bar{x}_1,\ldots,G\cdot \bar{x}_n)\big)\]
\end{proposition}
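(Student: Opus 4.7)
The plan is a double induction on $n$ and $|G|$, powered by the Koszul-type filtration of Proposition \ref{prop:associatedgradedformula} together with the nilpotence hypothesis.

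To reduce from $n>1$ to $n=1$, I would apply the $n=1$ case to the variable $\bar{x}_n$, obtaining $M\in\mathrm{Thick}^R_{H\subset G}(G/H_+\otimes M/(G\cdot\bar{x}_n))$. On each $G/H_+\otimes M/(G\cdot\bar{x}_n)$ the remaining norms $N_{C_2}^K(\bar{x}_i)$ still act nilpotently (nilpotence of a self-map is preserved under quotienting and tensoring), so the inductive hypothesis on $n$ applies. A double-coset decomposition of $G/K_+\otimes G/H_+$ into a sum of $G/L_+$'s then assembles everything into the desired conclusion.

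For $n=1$, I would induct on $|G|$. The base $G=C_2$ reduces to the classical observation that, since $\bar{x}^N=0$ on $M$, the cofiber of $\bar{x}^N\colon\Sigma^{Nk}M\to M$ is $M\oplus\Sigma^{Nk+1}M$ and is built by extensions from shifts of $M/\bar{x}$, so $M\in\mathrm{Thick}(M/\bar{x})$. For the inductive step I would apply Proposition \ref{prop:associatedgradedformula} to $M$, producing a filtration whose top graded piece is $\mathrm{gr}_0=M/(G\cdot\bar{x})=G/G_+\otimes M/(G\cdot\bar{x})$, whose bottom piece is $\Sigma^{k\rho_G}M$ with the structure map $N_{C_2}^G(\bar{x})\colon\Sigma^{k\rho_G}M\to M$ as recorded in Proposition \ref{prop:nilpotenceequivariantcofiber}, and whose intermediate pieces have the form $\mathrm{Ind}_{H_f}^G N_f$ with $H_f\subsetneq G$. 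Splicing this filtration with itself $N$ times for $N$ large enough that $N_{C_2}^G(\bar{x})^N=0$, the new bottom-to-top composite is null, so $M$ splits off the total cofiber; that cofiber is built by extensions from shifts of the original pieces $\mathrm{gr}_0,\ldots,\mathrm{gr}_{|G|/2-1}$, so $M$ lies in the thick subcategory they generate.

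Each intermediate piece $\mathrm{gr}_i=\mathrm{Ind}_{H_f}^G N_f$ is then handled by the inductive hypothesis on $|G|$: viewing $N_f$ as an $\mathrm{Res}^G_{H_f}R$-module with remaining variables $\{g\bar{x}\mid gH_f\in f^{-1}(1)/H_f\}$, the theorem places $N_f\in\mathrm{Thick}^{\mathrm{Res}^G_{H_f}R}(H_f/K_+\otimes\mathrm{Res}^G_{H_f}M/(G\cdot\bar{x}))$. Applying $\mathrm{Ind}_{H_f}^G$ and invoking the projection formula $\mathrm{Ind}_{H_f}^G(H_f/K_+\otimes\mathrm{Res}^G_{H_f}(-))\simeq G/K_+\otimes(-)$ yields $\mathrm{gr}_i\in\mathrm{Thick}^R(G/K_+\otimes M/(G\cdot\bar{x}))$, as required.

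The main obstacle I expect is verifying that the nilpotence hypothesis survives passage to the $N_f$: one needs $N_{C_2}^K(g\bar{x})$ to act nilpotently on $\mathrm{Res}^{H_f}_K N_f$ for all $C_2\subset K\subset H_f$ and all relevant $g$. Since $G$ is abelian, $g\bar{x}$ is a $G$-conjugate of $\bar{x}$ and $N_{C_2}^K(g\bar{x})$ is correspondingly a translate of $N_{C_2}^K(\bar{x})$, so the hypothesized nilpotence on $\mathrm{Res}^G_K M$ descends to the quotient $N_f$. Verifying this bookkeeping carefully, together with the compatibility needed to iterate the $\mathbb{S}[G\cdot\bar{x}]$-linear filtration and realize $N_{C_2}^G(\bar{x})^N$ as the bottom-to-top composite, is the crux of the argument.
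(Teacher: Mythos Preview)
Your proposal is correct and follows essentially the same route as the paper: double induction on $n$ and $|G|$, reducing to $n=1$, and at the inductive step using the Koszul filtration of Proposition~\ref{prop:associatedgradedformula} to reduce to induced pieces handled by the $|G|$-induction. The only cosmetic difference is that the paper separates the splitting step into two moves---first the nilpotence of $N_{C_2}^G(\bar{x})$ gives $M\in\mathrm{Thick}^R(M/N_{C_2}^G(\bar{x}))$, and then the filtration places the ordinary cofiber $M/N_{C_2}^G(\bar{x})$ in the thick subcategory generated by the $\mathrm{gr}_m$'s---whereas you splice the filtration with itself to do both at once; the content is the same.
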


\begin{proof}
Our proof closely follows that of Proposition \ref{prop:nilpotenceequivariantcofiber}. We reduce as before to the case $n=1$ and set $\bar{x}:=\bar{x}_1$. When $G=C_2$, the result is a standard argument using that thick subcategories are closed under retracts. We may therefore assume that the result is known at $C_2\subset H\subsetneq G$, for all $n$.

Since $N_{C_2}^G(\bar{x})$ acts nilpotently on $M$, it follows that
\[M\in \mathrm{Thick}^R_{H\subset G}(G/H_+\otimes M/N_{C_2}^G(x)).\]
The filtration of Proposition \ref{prop:associatedgradedformula} implies that
\[M/N_{C_2}^G(x)\in\mathrm{Thick}^R(\mathrm{gr}_m)_{0<m<2^n},\]
so it suffices to show that $\mathrm{gr}_m\in \mathrm{Thick}^R_{H\subset G}\big(G/H_+\otimes M/(G\cdot \bar{x})\big)$ for all $0<m<2^n$. In this case, up to shifts, $\mathrm{gr}_m$ is of the form $\mathrm{Ind}_H^G\mathrm{Res}^G_H(M)/(H\cdot S)$, for $C_2\subset H\subsetneq G$ and $S$ a set of variables containing $G/H$-conjugates of $\bar{x}$. The class $N_{C_2}^H(\bar{x})$ acts nilpotently on $\mathrm{Res}^G_HM$ by assumption and therefore the same is true for all of its $G/H$-conjugates. By functoriality, these classes also act nilpotently on the quotient $\mathrm{Res}^G_H(M)/(H\cdot S)$. It follows by the induction hypothesis that 
\[\mathrm{Res}^G_H(M)/(H\cdot S)\in\mathrm{Thick}^{\mathrm{Res}^G_H(R)}_{K\subset H}(H/K_+\otimes \mathrm{Res}^G_H(M/(G\cdot\bar{x}))),\]
since $\mathrm{Res}^G_H(M/(G\cdot\bar{x}))$ is of the form $\big(\mathrm{Res}^G_H(M)/(H\cdot S)\big)/(H\cdot S')$ for $S'$ some set of variables containing $G/H$-conjugates of $\bar{x}$. Since the left adjoint 
\[\mathrm{Ind}_H^G:\mathrm{Mod}_{\Sp^H}(\mathrm{Res}^G_HR)\to\mathrm{Mod}_{\Sp^G}(R)\]
is exact, it follows that
\[\mathrm{Ind}_H^G\mathrm{Res}^G_H(M)/(H\cdot S)\in\mathrm{Thick}^R_{H\subset G}(G/H_+\otimes M/(G\cdot \bar{x})),\]
completing the induction.\qedhere

\end{proof}

\section{The $\bpgm{m}$'s are \emph{fp}-spectra}\label{sec:fpsection}
In this section, we carry out an induction argument to show that $\bpgm{m}^H$ is an \emph{fp} spectrum for all $H\subset G$ and $m\ge0$. When $m=0$, this always gives the Eilenberg--Maclane spectrum $H\Z$, which is an \emph{fp} spectrum, and we can reduce to this case using Proposition \ref{prop:thicksubcatsprop}. This requires us to know that the norms of the classes $\overline{t}_i^G\in \pi_{*\rho_2}^{C_2}BP^{(\!(G)\!)}$ act nilpotently on $\bpgm{m}$ for $1\le i\le m$. Algebraically, this is a consequence of Theorem \ref{thm:mainalgebraicthm}, and we lift this to the spectrum level in this section.

Up to taking powers, modding out by the ideal $(2,v_1,\ldots,v_{h})$ is realized on the spectrum level by tensoring with a finite spectrum $K$ of chromatic type $h+1$. Theorem \ref{thm:mainalgebraicthm} states that, up to raising to powers, the classes $\overline{t}_i^G$ lie in the ideal $(G\cdot \overline{t}_{m+1}^G,\ldots,G\cdot \overline{t}_{h}^G)$ after modding out by $(2,v_1,\ldots,v_{h})$, which takes the following form on the spectrum level. In the following, recall from \cite[Theorem 2.28]{HK} that, given a choice of $v_i$ generators in $BP_*$, there exist unique lifts $\bar{v}_i\in\pi_{(2^i-1)\rho}^{C_2}BP_\R$ along the restriction map.

\begin{proposition}\label{prop:ticlasses}
Let $K$ be a finite, homotopy commutative $C_2$ ring spectrum such that $i^*_eK$ has chromatic type $h+1$. Then, for all $i\le m$, there exists $N_i>>0$ and 
\[x_i\in(G\cdot \overline{t}_{m+1}^G,\ldots,G\cdot \overline{t}_{h}^G)\subset\pi_{*\rho_2}^{C_2}BP^{(\!(G)\!)}\]
such that the unit map
\[\eta:\pi_{*\rho_2}^{C_2}BP^{(\!(G)\!)}\to \pi_{*\rho_2}^{C_2}(BP^{(\!(G)\!)}\otimes K)
\]
satisfies $\eta(\overline{t}_i^{N_i})=\eta(x_i)$.
\end{proposition}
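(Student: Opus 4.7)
My plan is to lift the purely algebraic nilpotence statement of Theorem \ref{thm:mainalgebraicthm} to the $C_2$-equivariant level and then realize it in the homotopy of $BP^{(\!(G)\!)}\otimes K$ by exploiting the chromatic type of $K$. As a first step I would observe that the restriction map
\[
    \pi^{C_2}_{k\rho_2}BP^{(\!(G)\!)} \longrightarrow \pi^e_{2k}BP^{(\!(G)\!)}
\]
is an isomorphism of rings: both sides are polynomial algebras on the sets $G\cdot\bar{t}_i^G$ and $G\cdot t_i^G$ respectively, and degrees line up since $(2^i-1)\rho_2$ restricts to $2(2^i-1)$. Using this to lift the conclusion of Theorem \ref{thm:mainalgebraicthm}, I obtain for each $i\le m$ an exponent $N''_i$ and an equation
\[
    \bar{t}_i^{N''_i} \;=\; \bar{x}'_i + 2\,\bar{y}_0 + \bar{v}_1\bar{y}_1 + \cdots + \bar{v}_h\bar{y}_h
\]
in $\pi^{C_2}_{*\rho_2}BP^{(\!(G)\!)}$ with $\bar{x}'_i\in(G\cdot\bar{t}_{m+1}^G,\ldots,G\cdot\bar{t}_h^G)$.

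I would then apply $\eta$, raise the resulting equation to a large power $M$, and expand multinomially, which is legitimate since $BP^{(\!(G)\!)}\otimes K$ is homotopy commutative. Any term in the expansion containing a factor of $\eta(\bar{x}'_i)$ lies in the image of the target ideal and is collected into the candidate for $\eta(x_i)$; the remaining terms are monomials of total degree $M$ in the classes $2,\eta(\bar{v}_1),\ldots,\eta(\bar{v}_h)$, weighted by products of the $\eta(\bar{y}_j)$'s. By pigeonhole each such monomial contains a factor $\eta(\bar{v}_j)^{b_j}$ with $b_j\ge M/(h+1)$ (setting $\bar{v}_0:=2$), so the proof concludes once I verify a uniform vanishing $2^{N'}=0$ and $\eta(\bar{v}_j)^{N'}=0$ in $\pi^{C_2}_{*\rho_2}(BP^{(\!(G)\!)}\otimes K)$ for all $1\le j\le h$, and then take $M>(h+1)N'$.

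The vanishing of $2^{N'}$ is immediate from $i^*_eK$ having chromatic type $h+1\ge1$, which forces a power of $2$ to annihilate the identity of $K$. The main obstacle is the equivariant nilpotence of $\eta(\bar{v}_j)$ for $1\le j\le h$: underlying nilpotence $v_j^{N'}=0$ in $\pi^e_*(BP^{(\!(G)\!)}\otimes K)$ is automatic from the type hypothesis on $i^*_eK$, but this must be upgraded to an $RO(C_2)$-graded statement. I would handle this by a bootstrap argument combining the Beaudry--Hill--Shi--Wang recursive formulas used in the proof of Theorem \ref{thm:mainalgebraicthm} (which express each $\bar{v}_j$ modulo $2$ as a polynomial in the $\bar{t}_k^G$'s and their $G$-conjugates), the already-established equivariant nilpotence of $2$, and Corollary \ref{cor:nilpotentfromaugideal} applied to an appropriate equivariant quotient of $BP^{(\!(G)\!)}$ in which powers of $\bar{v}_j$ are placed inside the augmentation ideal generated by $G\cdot\bar{t}_{m+1}^G,\ldots,G\cdot\bar{t}_h^G$; this should convert the underlying nilpotence, together with the algebraic structure of the $\bar{v}_j$'s modulo $2$, into the desired spectrum-level vanishing after $\eta$.
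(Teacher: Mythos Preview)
Your overall outline matches the paper's proof closely: lift Theorem~\ref{thm:mainalgebraicthm} to $\pi^{C_2}_{*\rho_2}BP^{(\!(G)\!)}$, write $\bar{t}_i^{N''_i}$ as a sum of something in the target ideal and something in $(2,\bar{v}_1,\ldots,\bar{v}_h)$, then use nilpotence of the latter after applying $\eta$ to finish. The paper does exactly this.

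The genuine gap is your argument for the $C_2$-equivariant nilpotence of $\eta(\bar{v}_j)$. The bootstrap you sketch is not well-defined: Corollary~\ref{cor:nilpotentfromaugideal} concerns norms of classes in an augmentation ideal acting on the \emph{quotient} by that ideal, whereas here you need nilpotence on $BP^{(\!(G)\!)}\otimes K$ itself, not on any equivariant quotient; there is no evident ``appropriate equivariant quotient'' to which the corollary applies. Likewise, expressing $\bar{v}_j$ modulo $2$ as a polynomial in the $\bar{t}_k^G$'s just restates that $\bar{v}_j$ lies in the ring; it gives no leverage toward nilpotence after tensoring with $K$.

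The paper's argument for this step is short and uses an ingredient you have not mentioned: the key fact $\Phi^{C_2}(\bar{v}_j)=0$ in $\Phi^{C_2}BP_\R$. One shows directly that the $C_2$-spectrum $(BP_\R\otimes K)[\bar{v}_j^{-1}]$ is contractible by checking both isotropy pieces: its underlying spectrum is $BP[v_j^{-1}]\otimes i_e^*K$, which vanishes since $BP[v_j^{-1}]$ is Bousfield equivalent to $E(j)$ and $i_e^*K$ has type $h+1>j$; and its geometric $C_2$-fixed points vanish because $\Phi^{C_2}(\bar{v}_j)=0$. Since $\mathrm{Res}^G_{C_2}BP^{(\!(G)\!)}\otimes K$ is a module over $BP_\R\otimes K$, the localization $(\mathrm{Res}^G_{C_2}BP^{(\!(G)\!)}\otimes K)[\bar{v}_j^{-1}]$ is also contractible, which is exactly the equivariant nilpotence you need. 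Once you have this, the rest of your argument (or the paper's more streamlined version, which just observes that anything in $(2,\bar{v}_1,\ldots,\bar{v}_h)$ maps to a nilpotent under $\eta$) goes through.
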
 
\begin{proof}
We first note that each of the elements $2,\overline{v}_1,\ldots,\overline{v}_{h}$ is nilpotent in the ring $\pi_{*\rho_2}^{C_2}(BP^{(\!(G)\!)}\otimes K)$, since for each $0\le i\le h$, the $C_2$-spectrum
\[(\mathrm{Res}^G_{C_2}BP^{(\!(G)\!)}\otimes K)[\overline{v}_i^{-1}]\]
is contractible. Indeed, it is a module over 
\[(\BPR\otimes K)[\overline{v}_i^{-1}],\]
which is contractible since $\Phi^{C_2}(\overline{v}_i)=0$ and $\mathrm{Res}^G_e\BPR[\overline{v}_i^{-1}]=BP[v_i^{-1}]$, and $BP[v_i^{-1}]$ is Bousfield equivalent to the Johnson-Wilson theory $E(i)$. It follows that, for all $0\le i\le h$, there exist $k_i>>0$ such that $\eta(\overline{v}_i^{k_i})=0$. Moreover, since $BP^{(\!(G)\!)}\otimes K$ is homotopy commutative, the ring $\pi_{*\rho_2}(BP^{(\!(G)\!)}\otimes K)$ is graded-commutative, so if $x\in (2,\bar{v}_1,\ldots,\bar{v}_{h})$, then $\eta(x)$ is nilpotent.

 By Theorem \ref{thm:mainalgebraicthm}, for all $1\le i\le m$, there exists $n_i>>0$ such that
\[\overline{t}_i^{n_i}=w_i+y_i,\]
for some $w_i\in (2,\overline{v}_1,\ldots,\overline{v}_{h})$ and $y_i\in (G\cdot \overline{t}_{m+1}^G,\ldots,G\cdot \overline{t}_{h}^G)$. 
 It follows then that $\eta(w_i^{M_i})=0$ for some $M_i>>0$, and by replacing $n_i$ with a larger power $N_i$, it follows that $\eta(\overline{t}_i^{N_i})=\eta(x_i)$, for some 
\[x_i\in (G\cdot \overline{t}_{m+1}^G,\ldots,G\cdot \overline{t}_{h}^G),\]
as claimed.
\end{proof}

\begin{example}
If $K$ is a finite (nonequivariant) homotopy commutative ring spectrum, both $\mathrm{inf}_e^{C_2}K$ and $N_e^{C_2}K$ work here. Such finite spectra $K$ of any chromatic type exist by work of Burklund \cite{burklundquotients}.
\end{example}

This puts us now in the situation to apply Proposition \ref{prop:thicksubcatsprop}. In the following, we let $K$ be a finite, homotopy commutative ring spectrum of type $h+1$. 

\begin{lemma}\label{lemma:nilpotentactiononbpgm}
Kor all $C_2\subset H\subset G$, $N_{C_2}^H(\overline{t_i}^G)$ acts nilpotently on \[\mathrm{Res}^G_H(\bpgm{m})\otimes \mathrm{inf}_e^HK\in\mathrm{Mod}_{\Sp^H}(\mathrm{Res}^G_HMU^{(\!(G)\!)})\]
for $1\le i\le m$.
\end{lemma}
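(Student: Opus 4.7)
The plan is to combine the algebraic identity in $\pi_{*\rho_2}^{C_2}(BP^{(\!(G)\!)}\otimes K)$ provided by Proposition \ref{prop:ticlasses} with the nilpotence-on-quotients statement of Corollary \ref{cor:nilpotentfromaugideal}. First I would apply Proposition \ref{prop:ticlasses} to the $C_2$-equivariant ring spectrum $\mathrm{inf}_e^{C_2}K$, whose restriction to the trivial subgroup is the type $h+1$ ring $K$. This produces, for each $1\le i\le m$, an integer $N_i\gg 0$ and a class
\[
x_i\in(G\cdot\bar{t}_{m+1}^G,\ldots,G\cdot\bar{t}_h^G)\subset\pi_{*\rho_2}^{C_2}BP^{(\!(G)\!)}
\]
whose image under the unit map $\eta$ satisfies $\eta(\bar{t}_i^{N_i})=\eta(x_i)$ in $\pi_{*\rho_2}^{C_2}(BP^{(\!(G)\!)}\otimes\mathrm{inf}_e^G K)$.

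Next I would apply Corollary \ref{cor:nilpotentfromaugideal} to $R=BP^{(\!(G)\!)}$, the $R$-module $M=BP^{(\!(G)\!)}\otimes\mathrm{inf}_e^G K$, the sequence $\bar{x}_j:=\bar{t}_{m+j}^G$ for $1\le j\le h-m$, and the class $\bar{x}:=x_i$. This shows that $N_{C_2}^H(x_i)$ acts nilpotently on
\[
\mathrm{Res}^G_H\bigl(BP^{(\!(G)\!)}/(G\cdot\bar{t}_{m+1}^G,\ldots,G\cdot\bar{t}_h^G)\bigr)\otimes\mathrm{inf}_e^H K.
\]
Because $\bpgm{m}\otimes\mathrm{inf}_e^G K$ is a filtered colimit of further quotients of this intermediate module (via Definition \ref{def:bpgm}, using that the tensor with $\mathrm{inf}_e^G K$ commutes with colimits), and the uniform nilpotent vanishing on the intermediate quotient descends through all subsequent stages, $N_{C_2}^H(x_i)$ also acts nilpotently on $\mathrm{Res}^G_H(\bpgm{m})\otimes\mathrm{inf}_e^H K$.

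The remaining, and most delicate, step transfers nilpotence from $N_{C_2}^H(x_i)$ to $N_{C_2}^H(\bar{t}_i^G)$. By multiplicativity of the norm one has $N_{C_2}^H(\bar{t}_i^G)^{N_i}=N_{C_2}^H(\bar{t}_i^{N_i})$. Choosing $K$ to be $\mathbb{E}_\infty$ (possible by \cite{burklundquotients}) makes $\eta$ a map of $G$-$\mathbb{E}_\infty$-rings, so that norms commute with $\eta$, and the identity from the first step upgrades to
\[
\eta\bigl(N_{C_2}^H(\bar{t}_i^G)^{N_i}\bigr)=\eta\bigl(N_{C_2}^H(x_i)\bigr)
\]
in $\pi_*^H(BP^{(\!(G)\!)}\otimes\mathrm{inf}_e^G K)$. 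Since the action of any class in $\pi_*^H BP^{(\!(G)\!)}$ on $\mathrm{Res}^G_H(\bpgm{m})\otimes\mathrm{inf}_e^H K$ factors through $\eta$, the two classes induce the same endomorphism, and nilpotence of $N_{C_2}^H(x_i)$ yields nilpotence of $N_{C_2}^H(\bar{t}_i^G)^{N_i}$, hence of $N_{C_2}^H(\bar{t}_i^G)$ itself. The main obstacle I expect is exactly this last compatibility; fixing $K$ to carry an $\mathbb{E}_\infty$ refinement handles it cleanly, but verifying it directly for a merely homotopy commutative $K$ would require a more careful unwinding of the norm construction via its iterated smash product definition.
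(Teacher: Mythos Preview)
Your overall strategy matches the paper's, but the step you flag as ``most delicate'' is where the argument actually breaks. The appeal to an $\mathbb{E}_\infty$ structure on $K$ does not do what you need: even if $K$ were $\mathbb{E}_\infty$ (and Burklund's result only provides $\mathbb{E}_n$ for finite $n$), the inflation $\mathrm{inf}_e^G K$ is not a $G$-$\mathbb{E}_\infty$-ring, because inflation does not supply norm maps. Consequently $BP^{(\!(G)\!)}\otimes\mathrm{inf}_e^G K$ carries no internal $N_{C_2}^H$, and there is no mechanism to upgrade the $C_2$-level identity $\eta(\bar t_i^{N_i})=\eta(x_i)$ in $\pi_{*\rho_2}^{C_2}$ to an identity between $N_{C_2}^H(\bar t_i^G)^{N_i}$ and $N_{C_2}^H(x_i)$ in $\pi_*^H$ of that ring. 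So the transfer of nilpotence from $N_{C_2}^H(x_i)$ to $N_{C_2}^H(\bar t_i^G)$ is not justified.

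The paper handles exactly this point by replacing $\mathrm{inf}_e^H K$ with the norm $N_{C_2}^H(\mathrm{inf}_e^{C_2}K)$: the ring $\mathrm{Res}^G_H BP^{(\!(G)\!)}\otimes N_{C_2}^H(\mathrm{inf}_e^{C_2}K)$ genuinely receives $N_{C_2}^H$ applied to the $C_2$-level unit map, so the normed identity holds there and Corollary~\ref{cor:nilpotentfromaugideal} gives nilpotence on $\mathrm{Res}^G_H\bpgm{m}\otimes N_{C_2}^H(\mathrm{inf}_e^{C_2}K)$. Getting back to $\mathrm{inf}_e^H K$ then requires a further ingredient your proposal is missing: since $N_{C_2}^H(\mathrm{inf}_e^{C_2}K)$ and $\mathrm{inf}_e^H K$ are finite $H$-spectra of the same chromatic type at every subgroup, the determination of the Balmer spectrum of $\Sp^{H,\omega}_{(2)}$ in \cite{6author} shows they generate the same thick tensor-ideal, and the nilpotence statement transfers. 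As a smaller side issue, Corollary~\ref{cor:nilpotentfromaugideal} requires $R$ to be $G$-$\mathbb{E}_\infty$, so you should take $R=MU^{(\!(G)\!)}$ rather than $BP^{(\!(G)\!)}$, which is not known to carry such a structure.
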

\begin{proof}
By Proposition \ref{prop:ticlasses}, the unit map 
\[\eta:\pi_{*\rho}^{C_2}(MU^{(\!(G)\!)}_{(2)})\to \pi_{*\rho}^{C_2}(BP^{(\!(G)\!)}\otimes \mathrm{inf}_e^{C_2}K)\]
has the property that
\[\eta((\overline{t}_i^G)^{N_i})=\eta(x_i)\]
for some exponent $N_i$ and some 
\[x_i\in (G\cdot \overline{t}_{m+1}^G,\ldots,G\cdot \overline{t}_{h}^G)\in\pi_{*\rho}^{C_2}(MU^{(\!(G)\!)}_{(2)}),\]
for all $1\le i\le m$. Since $\mathrm{Res}^G_HBP^{(\!(G)\!)}\ip{m}\otimes N_{C_2}^H(\mathrm{inf}_e^{C_2}K)$ is a module over \[\mathrm{Res}^G_HBP^{(\!(G)\!)}\otimes N_{C_2}^H(\mathrm{inf}_e^{C_2}K)\] 
in $\mathrm{Mod}_{\Sp^H}(\mathrm{Res}^G_HMU^{(\!(G)\!)})$, it follows that the action of $N_{C_2}^H((\overline{t}_i^G)^{N_i})$ on \[\mathrm{Res}^G_H(BP^{(\!(G)\!)}\ip{m})\otimes N_{C_2}^H(\mathrm{inf}_e^{C_2}K)\]
is the same as that of $N_{C_2}^H(x_i)$.

We claim then that $N_{C_2}^H(x_i)$ acts nilpotently on
\[\mathrm{Res}^G_H(BP^{(\!(G)\!)}\ip{m})\otimes N_{C_2}^H(\mathrm{inf}_e^{C_2}K)\]
Indeed, this follows from from Corollary \ref{cor:nilpotentfromaugideal} by setting $R=MU^{(\!(G)\!)}$, $x=x_i$, and 
\[M=BP^{(\!(G)\!)}/(G\cdot \overline{t}_i^G | i>h)\otimes \mathrm{inf}_{H}^GN_{C_2}^H(\mathrm{inf}_e^{C_2}K),\]
so that 
\[\bpgm{m}\otimes \mathrm{inf}_{H}^GN_{C_2}^H(\mathrm{inf}_e^{C_2}K)=M/(G\cdot\overline{t}_{m+1}^G,\ldots,G\cdot\overline{t}_{h}^G),\]
and
\[\mathrm{Res}^G_HM/(G\cdot\overline{t}_{m+1}^G,\ldots,G\cdot\overline{t}_{h}^G)=\mathrm{Res}^G_H(BP^{(\!(G)\!)}\ip{m})\otimes N_{C_2}^H(\mathrm{inf}_e^{C_2}K).\]

We now consider the subcategory $\mathcal{T}_i\subset Sp^{H,\omega}_{(2)}$ consisting of those finite $2$-local $H$-spectra $X$ with the property that the class $N_{C_2}^H(\overline{t}_i^G)$ acts nilpotently on \[\mathrm{Res}^G_H\bpgm{m}\otimes X\]
in $\mathrm{Mod}_{\Sp^H}(\mathrm{Res}^G_HMU^{(\!(G)\!)})$. We observe that $\mathcal{T}_i$ is a thick tensor-ideal of $Sp^{H,\omega}_{(2)}$ and that  $N_{C_2}^H(\mathrm{inf}_e^{C_2}K)\in\mathcal{T}_i$ for all $1\le i\le m$. The two finite $H$-spectra $N_{C_2}^H(\mathrm{inf}_e^{C_2}K)$ and $\mathrm{inf}_e^H(K)$ are both of chromatic type $m|G|/2+1$ at each subgroup of $H$, in the sense of \cite{balmersanders}. By the determination of the topology of the Balmer spectrum in \cite{6author}, they therefore generate the same thick tensor-ideal, and we conclude that $\mathrm{inf}_e^H(K)\in \mathcal{T}_i$ for all $1\le i\le m$, proving the claim of the proposition.
\end{proof}

\begin{theorem}\label{thm:fptypem|G|/2}
$BP^{(\!(G)\!)}\ip{m}^H$ is fp of type $\le h$ for all $H\subset G$.
\end{theorem}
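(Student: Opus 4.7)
The plan is to prove \emph{fp}-type $\le h$ by exhibiting, for one chosen finite type-$(h+1)$ spectrum $K$, the finiteness of $\pi_*(\bpgm{m}^H \otimes K)$; by the thick subcategory theorem applied to nonequivariant finite spectra, this implies the same for every finite type-$(h+1)$ spectrum. Fix such a $K$, taken to be a finite homotopy commutative $\mathbb{E}_1$-ring of chromatic type $h+1$ (which exists by Burklund). Since $K$ is dualizable, the adjunction $\inf_e^G \dashv (-)^G$ together with its compatibility with restriction yields a natural equivalence
\[
\bpgm{m}^H \otimes K \simeq (\bpgm{m} \otimes \inf_e^G K)^H
\]
for every $H \subset G$. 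So it suffices to show that $\pi_* M^H$ is finite, where $M := \bpgm{m} \otimes \inf_e^G K$, viewed as a module over $MU^{(\!(G)\!)}$ in $\Sp^G$.

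I would then apply Proposition \ref{prop:thicksubcatsprop} to $M$ with $\bar{x}_i := \bar{t}_i^G$ for $1 \le i \le m$. The required nilpotence of $N_{C_2}^{H'}(\bar{t}_i^G)$ on the relevant restrictions of $M$, for all $C_2 \subset H' \subset G$, is precisely Lemma \ref{lemma:nilpotentactiononbpgm}. The proposition then places $M$ inside the thick subcategory of $\mathrm{Mod}_{\Sp^G}(MU^{(\!(G)\!)})$ generated by the objects $G/H'_+ \otimes M/(G \cdot \bar{t}_1^G, \ldots, G \cdot \bar{t}_m^G)$ as $H'$ ranges over subgroups of $G$. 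By construction the iterated equivariant quotient is $\bpgm{0} \otimes \inf_e^G K \simeq H\mZ \otimes \inf_e^G K$, since killing all of $G \cdot \bar{t}_i^G$ brings us down to height zero.

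The functor $(-)^H : \Sp^G \to \Sp$ is exact and therefore preserves thick subcategories, so $M^H$ lies in the thick subcategory of $\Sp$ generated by the spectra $(G/H'_+ \otimes H\mZ \otimes \inf_e^G K)^H$. Decomposing $G/H'$ into $H$-orbits via double cosets and applying the Wirthmüller-type identity $(H/L_+ \otimes X)^H \simeq X^L$ writes each such fixed-points spectrum as a finite sum of $(H\mZ \otimes \inf_e^G K)^{H_1}$ for various $H_1 \subset G$. The dualizability calculation above identifies each summand with $H\Z \otimes K$, and this has finite homotopy because $K$ is a finite spectrum of type $\ge 1$, hence rationally acyclic. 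Thick-subcategory closure then makes $\pi_* M^H$ finite, as desired.

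The principal obstacle has essentially been surmounted already: Lemma \ref{lemma:nilpotentactiononbpgm} is the nontrivial input that makes the thick-subcategory reduction possible, and that nilpotence depends on Theorem \ref{thm:mainalgebraicthm} (via the classes $x_i$ of Proposition \ref{prop:ticlasses}) and on the Koszul-style generation of Proposition \ref{prop:thicksubcatsprop}. Given these, the remaining argument is a clean manipulation of equivariant thick subcategories and the finite-spectrum dualizability needed to commute $K$ past the fixed-points functor.
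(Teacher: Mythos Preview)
Your proposal is correct and follows essentially the same route as the paper: you apply Proposition~\ref{prop:thicksubcatsprop} to $M=\bpgm{m}\otimes\inf_e^G K$ using Lemma~\ref{lemma:nilpotentactiononbpgm} for the nilpotence input, and then reduce to $H\mZ\otimes\inf_e^G K$. The only cosmetic difference is that the paper first restricts to $H$ and applies the thick-subcategory proposition in $\Sp^H$ (so the generators are already $H/L_+\otimes H\mZ\otimes\inf_e^H K$ and one avoids the double-coset step), whereas you stay in $\Sp^G$ and unwind $(G/H'_+\otimes-)^H$ afterward; both arrive at $\mathrm{Thick}(H\Z\otimes K)\subset\mathrm{Thick}(H\F_2)$.
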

\begin{proof}
Fixing a type $h+1$ finite ring spectrum $K$ as above, it follows from Lemma \ref{lemma:nilpotentactiononbpgm} and Proposition \ref{prop:ticlasses} that
\[\mathrm{Res}^G_H(BP^{(\!(G)\!)}\ip{m}\otimes \mathrm{inf}_e^GK)\in \mathrm{Thick}_{L\subset H}(H/L_+\otimes H\underline{\Z}\otimes \mathrm{inf}_e^HK).\]
It follows then that 
\begin{align*}
  BP^{(\!(G)\!)}\ip{m}^H\otimes K&=(\mathrm{Res}^G_HBP^{(\!(G)\!)}\ip{m}\otimes \mathrm{inf}_e^HK)^H\\
  &\in\mathrm{Thick}(H\Z\otimes K)\\
  &\subset\mathrm{Thick}(H\F_2)
\end{align*}
where we use the fact that $H\Z$ is an \emph{fp} spectrum of type $\le h$.
\end{proof}

\section{$K$-theory and Euler Characteristics}\label{sec:moorespectra} For each $H\subset G$, the fixed point spectrum $\bpgm{m}^H$ now determines an object in the stable $\infty$-category $\fp{h}$ of \emph{fp} spectra of type $\le h$. The filtration of Section \ref{sec:filtration} allows us to give explicit relations between the $K$-theory classes of these spectra in $K_0(\fp{h})$. 

\subsection{K theory relations for fixed point spectra}

The regular representation sphere $S^{\rho_G}$ for a cyclic $2$-group $G$ has a straightforward cell structure. This allows us to determine the $K$-theory class of $RO(G)$ shifts of $X$ in terms of that of $X$. We use here the defining property of $K$-theory, which states that the $K$-theory class of an object with a finite filtration is the same as the $K$-theory of its associated graded. 

\begin{notation}
    In this section, for a set of spectra $T$, let $\langle T\rangle$ denote the smallest stable subcategory of spectra containing $T$.
\end{notation}

\begin{proposition}\label{prop:ktheoryofshift}
    Let $G=C_{2^n}$, $G'=C_{2^{n-1}}$ and $X\in \Sp^G$, then one has $K$-theory relations
    \[[(\Sigma^{m\rho_G}X)^G]=\begin{cases}
        [X^{G'}]-[X^G]&m=2k+1\\
        [X^G]&m=2k
    \end{cases}\]
    in $K_0(\langle X^H\rangle_{H\subset G})$
\end{proposition}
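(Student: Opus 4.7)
The plan is to reduce to the two base cases $m = 1$ and $m = 2$, then iterate. The key representation-theoretic input is the decomposition $\rho_G \cong 1 + \sigma + V$, where $\sigma$ is the $1$-dimensional sign representation factoring through $G/G' \cong C_2$, and $V$ is the sum of all $2$-dimensional real irreducible representations $\lambda$ of $G = C_{2^n}$. Each such $\lambda$ has a proper kernel $K_\lambda \subsetneq G$, and the quotient $G/K_\lambda$ acts faithfully on $\lambda$ as a rotation.

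First, the cofiber sequence $(G/G')_+ \to S^0 \to S^{\sigma}$, which arises from the identification $S(\sigma) \cong G/G'$ of the unit sphere in $\sigma$ with the two-element orbit $G/G'$, yields $[(\Sigma^{\sigma} X)^G] = [X^G] - [X^{G'}]$ after smashing with $X$, taking $G$-fixed points, and using $((G/G')_+ \wedge X)^G \simeq X^{G'}$. Next, for each $2$-dimensional summand $\lambda$ of $V$ with kernel $K$, the unit sphere $S(\lambda) = S^1$ admits a $G$-CW structure with a single orbit of $0$-cells and a single orbit of $1$-cells, both of type $G/K$. The cell-attaching pushout gives a cofiber sequence
\[
    (G/K)_+ \to S(\lambda)_+ \to \Sigma (G/K)_+.
\]
Smashing with $Y$ and taking $G$-fixed points (using $((G/K)_+ \wedge Y)^G \simeq Y^K$), we obtain $[(S(\lambda)_+ \wedge Y)^G] = [Y^K] + [\Sigma Y^K] = 0$, and the cofiber sequence $S(\lambda)_+ \to S^0 \to S^{\lambda}$ then gives $[(\Sigma^{\lambda} Y)^G] = [Y^G]$. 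Iterating over the summands of $V$ yields $[(\Sigma^{V} Y)^G] = [Y^G]$ for any $Y$.

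Combining these, $[(\Sigma^{\rho_G} X)^G] = [(\Sigma^V \Sigma^{1+\sigma} X)^G] = [(\Sigma^{1+\sigma} X)^G] = -[(\Sigma^{\sigma} X)^G] = [X^{G'}] - [X^G]$, which is the $m = 1$ case. For $m = 2$, setting $Z = \Sigma^{\rho_G} X$ and reapplying the $\rho_G$-formula gives $[(\Sigma^{2\rho_G} X)^G] = [Z^{G'}] - [Z^G]$. The value $[Z^G] = [X^{G'}] - [X^G]$ is known, while $[Z^{G'}]$ is computed by noting that the restriction of $\rho_G$ to $G'$ is $2\rho_{G'}$, whence $(\Sigma^{\rho_G} X)^{G'} \simeq (\Sigma^{2\rho_{G'}} \mathrm{Res}^G_{G'} X)^{G'}$; by induction on $|G|$ applied to the even case at $G'$, this equals $[X^{G'}]$, giving $[(\Sigma^{2\rho_G} X)^G] = [X^G]$. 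The general formula then follows by iterating these two identities, with the base of the induction $G = C_2$ handled directly, since $V = 0$ in that case.

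The main obstacle is controlling the contributions of the $2$-dimensional irreducible summands of $\rho_G$, which depends essentially on the cyclic $2$-group structure: because each nontrivial $2$-dimensional real irrep $\lambda$ acts on $S(\lambda) \cong S^1$ through a faithful rotation of a cyclic quotient $G/K$, the unit sphere admits the symmetric $G$-CW structure above, forcing the telescoping cancellation $[Y^K] + [\Sigma Y^K] = 0$. Without this rotational structure, as for more general $2$-groups such as $Q_8$, the corresponding fixed-point computation would involve less tractable $K$-theory contributions from the cells of $S(\lambda)$.
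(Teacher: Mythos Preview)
Your proof is correct. Both your argument and the paper's rest on the same idea---cell structures on representation spheres yield cofiber sequences whose fixed points give the desired $K$-theory relations---but you organise the computation differently. You split $\rho_G = 1 + \sigma + V$ into irreducibles and handle each piece separately, showing that every two-dimensional summand $\lambda$ contributes zero because $S(\lambda)\cong S^1$ carries a $G$-CW structure with one $0$-cell and one $1$-cell of the same orbit type, forcing $[Y^K]+[\Sigma Y^K]=0$. The paper instead invokes the single cell filtration of $S^{\rho_G}$ from \cite[Proposition~8.4.7]{HHRbook} and observes directly that the contributions from cells in dimensions $\ge 2$ cancel in pairs, leaving only $[X^{G'}]-[X^G]$. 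Your route is more self-contained (no external reference for the cell structure) and makes the cancellation mechanism conceptually transparent irreducible-by-irreducible; the paper's route is terser. In both cases the passage from $m=1$ to general $m$ requires knowing $[(\Sigma^{m\rho_G}X)^{G'}]$, which reduces via $\mathrm{Res}^G_{G'}\rho_G=2\rho_{G'}$ to the even case at $G'$, so an implicit induction on $|G|$ is present in both arguments.
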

\begin{proof}
    The cell structure on $S^{\rho_G}$ determines a filtration
    \[
    \begin{tikzcd}
        S^1\arrow[r]&K_2\arrow[r]\arrow[d]&K_3\arrow[d]\arrow[r]&\cdots\arrow[r]&K_{2^n}=S^{\rho_G}\arrow[d]\\
        &L_2&L_3&&L_{2^n}
    \end{tikzcd}
    \]
    where $L_j=\mathrm{Ind}_{C_{2^{n-i}}}^{C_{2^n}}S^j$ for $2^{i-1}<j\le 2^{i}$, by \cite[Proposition 8.4.7]{HHRbook}. This gives a $K$-theory relation
    \[[(\Sigma^{\rho_G}X)^G]=-[X^G]+\sum\limits_{i=1}^n\sum\limits_{j=2^{i-1}+1}^{2^{i}}(-1)^j[X^{C_{2^{n-i}}}]\]
    The terms in the inner summation cancel in pairs except when $i=1$, leaving $[X^{G'}]$. The statement of the proposition then follows by induction on $m$.
\end{proof}

While an ordinary cofiber $M/x$ is in the stable subcategory $\langle M\rangle$, the cellular filtration of $S^{\rho_G}$ along with the filtration of Proposition \ref{prop:associatedgradedformula} immediately imply the following for equivariant cofibers $M/(G\cdot \bar{x})$.

\begin{proposition}\label{prop:equivariantcofiberstablesubcat}
    Let $M$ be an $\mathbb{S}[G\cdot \bar{x}]$-module for $|x|=k\rho_2$. For all $H\subset G$, the equivariant cofiber $M/(G\cdot\bar{x})^H$is in the stable subcategory $\langle M^H\rangle_{H\subset G}$.
\end{proposition}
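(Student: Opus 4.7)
The plan is to induct on $|G|$. The base case $G=\{e\}$ is immediate: $M/(\{e\}\cdot\bar{x})=M/x$ is the ordinary cofiber of $x$ acting on $M$, and thus lies in $\langle M\rangle$ via its defining cofiber sequence.

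For the inductive step, assume the result for all proper subgroups $K\subsetneq G$. Apply Proposition \ref{prop:associatedgradedformula} to $M$ to obtain a finite $\mathbb{S}[G\cdot\bar{x}]$-linear filtration $M=X^0\supset X^1\supset\cdots\supset X^{2^{n-1}}\supset 0$ whose graded pieces are $\mathrm{gr}_0=M/(G\cdot\bar{x})$, $\mathrm{gr}_{2^{n-1}}=\Sigma^{k\rho_G}M$, and, in intermediate degrees, summands of the form $\mathrm{Ind}_{H_f}^G\Sigma^{n_fk\rho_{H_f}}N_f$, where $N_f=(\mathrm{Res}^G_{H_f}M)/(H_f\cdot g\bar{x}\mid gH_f\in f^{-1}(0)/H_f)$ and $H_f\subsetneq G$ is proper. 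Since $(-)^H$ is exact on cofiber sequences of genuine $G$-spectra, $M^H$ carries the analogous filtration with graded pieces $(\mathrm{gr}_m)^H$. The cofiber sequence $(X^1)^H\to M^H\to(\mathrm{gr}_0)^H$, together with the containment $X^1\in\langle\mathrm{gr}_m\mid m\geq 1\rangle$ coming from the tail of the filtration, then shows that $(M/(G\cdot\bar{x}))^H$ lies in the stable subcategory generated by $M^H$ and the $(\mathrm{gr}_m)^H$ for $m>0$. It therefore suffices to place each of these remaining pieces inside $\langle M^K\rangle_{K\subset G}$.

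For $(\Sigma^{k\rho_G}M)^H$, I would appeal to the iterated cell structure of $S^{k\rho_G}$ from \cite[Proposition 8.4.7]{HHRbook} used in the proof of Proposition \ref{prop:ktheoryofshift}. Smashing with $M$ and applying the projection formula yields a finite filtration of $\Sigma^{k\rho_G}M$ whose graded pieces are of the form $\Sigma^j M$ and $\mathrm{Ind}_K^G\Sigma^j\mathrm{Res}^G_KM$ for proper $K\subsetneq G$. Taking $H$-fixed points and applying the Wirthmüller isomorphism together with the double-coset formula (which simplifies since the subgroups of a cyclic $2$-group are totally ordered by inclusion) identifies $(\mathrm{Ind}_K^G\Sigma^j\mathrm{Res}^G_KM)^H$ as a direct sum of shifts of $M^{K\cap H}$. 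Consequently $(\Sigma^{k\rho_G}M)^H$ inherits a finite filtration with all pieces of the form $\Sigma^j M^L$ for $L\subset G$, all visibly in $\langle M^K\rangle_{K\subset G}$.

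For an intermediate induced piece, the same Wirthmüller/double-coset reasoning expresses $(\mathrm{Ind}_{H_f}^G\Sigma^{n_fk\rho_{H_f}}N_f)^H$ as a direct sum of terms $(\Sigma^{n_fk\rho_{H_f}}N_f)^L$ for $L\subset H_f$. The inductive hypothesis at $H_f$, applied iteratively one orbit of generators at a time along the chain of quotients producing $N_f$ from $\mathrm{Res}^G_{H_f}M$, places $N_f^{L'}$ inside $\langle M^{L'}\rangle_{L'\subset H_f}$ for each $L'\subset H_f$. Iterating the representation-sphere argument from the previous paragraph with $\rho_{H_f}$ in place of $\rho_G$ absorbs the remaining suspension, yielding $(\Sigma^{n_fk\rho_{H_f}}N_f)^L\in\langle M^K\rangle_{K\subset H_f}\subseteq\langle M^K\rangle_{K\subset G}$ and closing the induction. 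The main obstacle I anticipate is the combinatorial bookkeeping: at each stage one must track how representation-sphere suspensions at various subgroups interact with induction, restriction, and iterated fixed points, and verify that the filtration pieces produced by unwinding cell filtrations remain inside $\langle M^K\rangle_{K\subset G}$.
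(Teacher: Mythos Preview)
Your proposal is correct and follows essentially the same route as the paper: the paper's proof is the one-sentence remark that the cellular filtration of $S^{\rho_G}$ together with the Koszul filtration of Proposition~\ref{prop:associatedgradedformula} ``immediately imply'' the result, and your argument is a careful unpacking of exactly those two ingredients via induction on $|G|$. The induction, the Wirthm\"uller/double-coset identification of fixed points of induced pieces, and the iterated application of the inductive hypothesis to the intermediate $N_f$'s are all implicit in the paper's terse statement; you have simply made the bookkeeping explicit.
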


We may now apply Proposition \ref{prop:associatedgradedformula} to compute the $K$-theory class $[M]$ in terms of $[M/(G\cdot x)]$. As in the notation of Proposition \ref{prop:associatedgradedformula}, we let $M$ be an $\mathbb{S}[G\cdot x]$-module with $|x|=k\rho_2$, and we ask here that $k$ be odd. The following relation is immediate from Propositions \ref{prop:associatedgradedformula} and \ref{prop:ktheoryofshift}.

\begin{corollary}\label{cor:ktheoryoffiltration}
    There is a $K$-theory relation
    \begin{align*}
        2[M^{G}]&=[M^{G'}]+[M/(G\cdot \bar{x})^{G}]\\
        &+\sum_{\substack{f\in\Map^{C_2}\big(G,\{0,1\}\big)/G \\\mathrm{nonconstant }}}\bigg[\Sigma^{n_f\cdot k\rho_{H_f}}\frac{M}{(H_f\cdot g\bar{x}\mid gH_f\in f^{-1}(0)/H_f)}^{H_f}\bigg]
    \end{align*}
    in $K_0(\langle M^H\rangle_{H\subset G})$.
\end{corollary}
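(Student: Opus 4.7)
The plan is to combine the filtration of Proposition \ref{prop:associatedgradedformula} with the $K$-theoretic shift formula of Proposition \ref{prop:ktheoryofshift}, applied to the two constant $C_2$-equivariant functions $f\colon G\to \{0,1\}$.

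First, Proposition \ref{prop:associatedgradedformula} produces a finite $\mathbb{S}[G\cdot \bar{x}]$-linear filtration of $M$ whose associated graded is the indicated sum, indexed by $C_2$-equivariant maps $f\colon G\to \{0,1\}$ modulo the $G$-action. Since $K_0$ is additive on cofiber sequences and the $H$-fixed-point functors are exact, applying $(-)^G$ and summing yields
\[
    [M^G] = \sum_{f\in\Map^{C_2}(G,\{0,1\})/G}\Big[\big(\Ind_{H_f}^G \Sigma^{n_f k\rho_{H_f}} M/(H_f\cdot g\bar{x}\mid gH_f\in f^{-1}(0)/H_f)\big)^G\Big]
\]
in $K_0(\langle M^H\rangle_{H\subset G})$, where we invoke Proposition \ref{prop:equivariantcofiberstablesubcat} to verify that each graded piece lands in this stable subcategory after taking fixed points. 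Next I would use the standard identification $(\Ind_{H_f}^G Y)^G \simeq Y^{H_f}$ coming from the adjunction $\Ind_{H_f}^G \dashv \Res^G_{H_f}$, which rewrites each summand as an $H_f$-fixed-point spectrum.

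Now I would isolate the two constant maps from the sum. The constant map $f\equiv 0$ has $H_f=G$, $n_f=0$, and $f^{-1}(0)=G$, so its summand is simply $M/(G\cdot \bar{x})$, contributing $[M/(G\cdot\bar{x})^G]$. The constant map $f\equiv 1$ also has $H_f=G$, but $f^{-1}(0)=\emptyset$ and $n_f=1$, so its summand is $\Sigma^{k\rho_G}M$, contributing $[(\Sigma^{k\rho_G}M)^G]$. All remaining $f$'s are nonconstant, and those are precisely the summands appearing in the stated sum.

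The final step is to apply Proposition \ref{prop:ktheoryofshift} to $(\Sigma^{k\rho_G}M)^G$. Since $k$ is odd by hypothesis, that proposition gives
\[
    [(\Sigma^{k\rho_G}M)^G] = [M^{G'}] - [M^G].
\]
Substituting and solving for $2[M^G]$ yields the asserted relation. No step should present serious difficulty; the only thing to be careful about is the bookkeeping of the constant $f$'s versus the nonconstant ones, and the sign/odd-degree hypothesis that lets Proposition \ref{prop:ktheoryofshift} apply in its $m=2k+1$ form to $k\rho_G$.
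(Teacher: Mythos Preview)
Your proposal is correct and follows exactly the approach the paper intends: the paper states that the relation is immediate from Propositions~\ref{prop:associatedgradedformula} and~\ref{prop:ktheoryofshift}, and you have spelled out precisely how those combine---using $(\Ind_{H_f}^G Y)^G\simeq Y^{H_f}$, isolating the two constant $f$'s, and applying the odd-$m$ case of Proposition~\ref{prop:ktheoryofshift} to the $f\equiv 1$ summand $\Sigma^{k\rho_G}M$. Your bookkeeping on the constant maps and the invocation of Proposition~\ref{prop:equivariantcofiberstablesubcat} to ensure the terms lie in $\langle M^H\rangle_{H\subset G}$ are all in order.
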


\begin{example}
    With notation as above, setting $G=C_4$, there is a $K$-theory relation
    \begin{align*}
       2[M^{C_4}]&=[M^{C_2}]+[M/(C_4\cdot x)^{C_4}]\\
       &+[M/\bar{x}^e]-[M/\bar{x}^{C_2}] 
    \end{align*}
in $K_0(\langle M^H\rangle_{H\subset G})$.
\end{example}

\begin{example}
    With notation as above, setting $G=C_8$, there is a $K$-theory relation
    \begin{align*}
        2[M^{C_8}]&=[M^{C_4}]+[M/(C_8\cdot \bar{x})^{C_8}]\\
        &+[M/(\bar{x},\gamma \bar{x},\gamma^2 \bar{x})^e]-[M/(\bar{x},\gamma \bar{x},\gamma^2 \bar{x})^{C_2}]\\
        &+[M/(\bar{x},\gamma^2 \bar{x})^{C_2}]-[M/(C_4\cdot \bar{x})^{C_4}]\\
        &+[M/(\bar{x},\gamma \bar{x})^{C_2}]\\
        &+[M/\bar{x}^e]-[M/\bar{x}^{C_2}]
    \end{align*}
in $K_0(\langle M^H\rangle_{H\subset G})$.
\end{example}

\subsection{Torsion $K$-theory classes} Nonequivariantly, the cofiber sequence
\[\Sigma^{2(2^n-1)}BP\ip{n}\xrightarrow{v_n}BP\ip{n}\to BP\ip{n-1}\]
gives the $K$-theory relation
\[[BP\ip{n-1}]=0\in K_0(\ip{BP\ip{n}})\]
Equivariantly, we will use our filtration to show that the classes $[\bpgm{m-1}^H]$ are \emph{torsion} in $K_0(\ip{\bpgm{m}^H}_{H\subset G})$ for all $H\subset G$. It seems plausible that these $K$-theory classes are in fact zero, but we do not know this, and we won't need it. 

\begin{proposition}\label{prop:ktheoryclassestorsion}
    Let $M$ be an $\mathbb{S}[G\cdot \bar{x}_1,\ldots,G\cdot \bar{x}_n]$-module for $|x_i|=(2k_i+1)\rho_2$. If the classes
    \[[M/(G\cdot \bar{x}_1,\ldots,G\cdot \bar{x}_n)^H]\in K_0(\langle M^H \rangle_{H\subseteq G})\]
    are torsion for all $H\subseteq G$, then the classes 
    \[[M/(G\cdot \bar{x_1},\ldots,G\cdot\bar{x}_i)^H]\in K_0(\langle M^H \rangle_{H\subseteq G})\]
    are torsion for all $H\subseteq G$, for all $1\le i\le n$.
\end{proposition}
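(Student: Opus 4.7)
The plan is a nested induction: the outer is on $|G|$ with base case $G=C_2$, and for each $G$, I combine a downward induction on $i$ with an argument via restriction to handle proper subgroups. The numerical input throughout is that $|\bar{x}_j|=(2k_j+1)\rho_2$ has odd regular-representation multiplier, allowing Corollary \ref{cor:ktheoryoffiltration} to apply, while its underlying degree $2(2k_j+1)$ is even, so every nonequivariant cofiber step by an $\bar{x}_j$ contributes zero in $K_0$.

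For the base case $G=C_2$, there are no nonconstant $C_2$-equivariant maps $C_2\to\{0,1\}$, so Corollary \ref{cor:ktheoryoffiltration} collapses to
\[2[N^{C_2}]=[N^e]+[N/(C_2\cdot\bar{x}_{i+1})^{C_2}],\]
where $N=M/(C_2\cdot\bar{x}_1,\ldots,C_2\cdot\bar{x}_i)$. The class $[N^e]$ is an iterated ordinary cofiber of $M^e$ by even-degree elements and vanishes in $K_0$, while the second term is torsion by the downward induction on $i$ (whose base $i=n$ is the hypothesis). Hence $[N^{C_2}]$ is torsion.

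For the inductive step with $|G|>2$, I first establish torsion at every proper subgroup $H\subsetneq G$. Viewing $\Res^G_H M$ as an $\mathbb{S}[H\cdot g\bar{x}_j\mid gH\in G/H,\,j]$-module, its full equivariant quotient is $\Res^G_H(M/(G\cdot\bar{x}_1,\ldots,G\cdot\bar{x}_n))$, whose $K$-fixed points for $K\subseteq H$ coincide with $M/(G\cdot\bar{x}_1,\ldots,G\cdot\bar{x}_n)^K$ and are torsion by hypothesis. The outer inductive hypothesis at $H$ then propagates torsion to every partial equivariant quotient of $\Res^G_H M$ at every $K\subseteq H$; specializing yields $[M/(G\cdot\bar{x}_1,\ldots,G\cdot\bar{x}_i)^K]$ torsion for all $K\subsetneq G$ and all $i$.

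Finally, for $H=G$ I induct downward on $i$, the base $i=n$ being the hypothesis. Applying Corollary \ref{cor:ktheoryoffiltration} to $N=M/(G\cdot\bar{x}_1,\ldots,G\cdot\bar{x}_i)$ with $\bar{x}=\bar{x}_{i+1}$ gives
\[2[N^G]=[N^{G'}]+[N/(G\cdot\bar{x}_{i+1})^G]+\sum_{f\text{ nonconstant}}\bigl[(\Sigma^{n_f(2k_{i+1}+1)\rho_{H_f}}Y_f)^{H_f}\bigr],\]
where $Y_f=N/(H_f\cdot g\bar{x}_{i+1}\mid gH_f\in f^{-1}(0)/H_f)$. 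The first term is torsion by the previous paragraph and the second by the secondary inductive hypothesis on $i$. Proposition \ref{prop:ktheoryofshift} rewrites each correction term as a $\Z$-linear combination of $[Y_f^{H_f}]$ and $[Y_f^{H_f'}]$; since $H_f\subsetneq G$ and $Y_f$ is a partial equivariant quotient of $\Res^G_{H_f}M$, the previous paragraph shows both are torsion. Hence $2[N^G]$, and so $[N^G]$, is torsion. The main subtlety is bookkeeping---verifying that each correction term is a partial equivariant quotient of some $\Res^G_{H_f}M$ fitting the outer inductive framework. This works because any proper subgroup $H_f\subsetneq G$ satisfies $C_2\subseteq H_f$ (since $G$ is abelian, $C_2$ stabilizes every $C_2$-equivariant $f$ under the translation $G$-action), so $H_f$ still has an index-$2$ subgroup and the odd-degree structure is preserved on restriction.
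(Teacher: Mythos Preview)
Your proof is correct and follows essentially the same approach as the paper's: a double induction on $|G|$ and downward on $i$, handling proper subgroups by restriction and the outer inductive hypothesis, and handling $H=G$ via Corollary~\ref{cor:ktheoryoffiltration} by showing each term on the right-hand side is torsion. The only cosmetic difference is that the paper takes $G=\{e\}$ as the base case (using directly that even-degree cofibers vanish in $K_0$), whereas you start at $G=C_2$ and absorb the underlying vanishing into that step; your additional remark that $C_2\subseteq H_f$ is a helpful piece of bookkeeping the paper leaves implicit.
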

\begin{proof}
    We argue by induction on $|G|$. The base case $G=\{e\}$ follows from the fact that the restriction of $\bar{x}_i$ has even degree, and for a cofiber sequence of spectra
    \[\Sigma^{2k}X\to X\to Y,\]
    one has that $[Y]=0\in K_0(\langle X\rangle)$.

Suppose now that $|G|>1$, and note that, for $H\subset G$ a proper subgroup
\[M/(G\cdot \bar{x}_1,\ldots,G\cdot\bar{x}_i)^H=\begin{cases}M/(H\cdot g\bar{x}_1,\ldots,H\cdot g\bar{x}_i\mid gH\in G/H)&|H|>1\\M/(gx_1,\ldots,gx_i\mid gC_2\in G/C_2)
&H=\{e\}\end{cases}\]
so that by induction $M/(G\cdot \bar{x}_1,\ldots,G\cdot\bar{x}_i)^H$ is torsion in $K_0(\langle M^K\rangle_{K\subset H})$ and therefore also in $K_0(\langle M^H\rangle_{H\subset G})$. 

It therefore suffices to prove the claim for $M/(G\cdot \bar{x}_1,\ldots,G\cdot\bar{x}_i)^G$,
and we proceed by downward induction on $i$, the case $i=n$ being the assumption of the proposition. Let $i\ge1$, and assume then that the classes
    \[[M/(G\cdot \bar{x_1},\ldots,G\cdot\bar{x}_{i+1})^H]\in K_0(\langle M^H \rangle_{H\subseteq G})\]
    are torsion for all $H\subset G$. We set $N:=M/(G\cdot \bar{x_1},\ldots,G\cdot\bar{x}_{i})$, and apply Corollary \ref{cor:ktheoryoffiltration} replacing $M$ and $\bar{x}$ with $N$ and $\bar{x}_{i+1}$, respectively. This gives the relation
    \begin{align*}
        2[N^{G}]&=[N^{G'}]+[N/(G\cdot\bar{x}_{i+1})^{G}]\\
        &+\sum_{\substack{f\in\Map^{C_2}\big(G,\{0,1\}\big)/G \\\mathrm{nonconstant }}}\bigg[\Sigma^{n_f\cdot k\rho_{H_f}}\frac{N}{(H_f\cdot g\bar{x}_{i+1}\mid gH_f\in f^{-1}(0)/H_f)}^{H_f}\bigg]   
    \end{align*}
    in $K_0(\langle N^H\rangle_{H\subset G})$, and therefore also in $K_0(\langle M^H\rangle_{H\subset G})$ by Proposition \ref{prop:equivariantcofiberstablesubcat}. We will show that each term on the right hand side is torsion in $K_0(\langle M^H\rangle_{H\subset G})$, completing the induction.
    
    After applying Proposition \ref{prop:ktheoryofshift} to get rid of shifts in the third term, we see that every term in this summation is torsion by the induction hypothesis on $|G|$. The term $[N/(G\cdot\bar{x}_{i+1})^G]$ is torsion by the inductive hypothesis on $i$, so it remains to examine
    \[[N^{G'}]=[M/(G'\cdot\bar{x}_j,G'\cdot\gamma\bar{x}_j\mid 1\le j\le i)^{G'}]\]
    where $\g$ is a generator of $G$. This class is torsion in $K_0(\langle M^H\rangle_{H\subset G'})$ by the inductive hypothesis on $|G|$ and therefore also in  $K_0(\langle M^H\rangle_{H\subset G})$.
\end{proof}

We now specialize to $M=\bpgm{m}$ and obtain the following.

\begin{theorem}\label{thm:mainktheoryrelation}
    For all $H\subset G$, there is an equation
    \[|H|\cdot[BP^{(\!(G)\!)}\langle m\rangle^H]\equiv[BP^{(\!(G)\!)}\langle m\rangle^e]\in K_0(\fp{h})\]
    modulo torsion.
\end{theorem}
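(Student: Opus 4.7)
The plan is to proceed by induction on the order $|G|$, with base case $|G|=1$ trivial (since $|e|=1$ and the equation is tautological). For the inductive step, assume $|H|[\bpgm{m}^H]\equiv[\bpgm{m}^e]$ modulo torsion for every $H\subsetneq G$, and establish the same relation for $H=G$.

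The core computation applies Corollary \ref{cor:ktheoryoffiltration} to $M=\bpgm{m}$ with the odd-degree class $\bar{x}=\bar{t}_m^G$ (whose degree $(2^m-1)\rho_2$ has $k=2^m-1$ odd):
\[
2[\bpgm{m}^G]=[\bpgm{m}^{G'}]+[\bpgm{m-1}^G]+\sum_{f\text{ nonconstant}}[\Sigma^{n_f(2^m-1)\rho_{H_f}}N_f^{H_f}],
\]
where each $H_f\subsetneq G$ and each $N_f$ is an equivariant quotient of $\bpgm{m}$ by a proper subset of the $G$-orbit of $\bar{t}_m^G$. Multiplying by $|G'|=|G|/2$ and substituting the inductive hypothesis for $[\bpgm{m}^{G'}]$, the theorem reduces to showing that the residual combination
\[
|G'|\bigl([\bpgm{m-1}^G]+\sum_{f\text{ nonconstant}}[\Sigma^{n_f(2^m-1)\rho_{H_f}}N_f^{H_f}]\bigr)
\]
is torsion in $K_0(\fp{h})$.

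For each term in the summation, I would first use Proposition \ref{prop:ktheoryofshift} to eliminate the regular representation shift, rewriting each class as a $\mathbb{Z}$-linear combination of $[N_f^{H_f}]$ and $[N_f^{H'_f}]$ for $H'_f$ the index-$2$ subgroup of $H_f$. Since $H_f\subsetneq G$ is proper and each $N_f$ lies in $\langle\bpgm{m}^H\rangle_{H\subset G}$ by Proposition \ref{prop:equivariantcofiberstablesubcat}, the outer induction relates these fixed-point classes to their underlying classes $[N_f^e]$ modulo torsion. Proposition \ref{prop:ktheoryclassestorsion}, applied to $\bpgm{m}$ as a module over $\mathbb{S}[G\cdot\bar{t}_1^G,\ldots,G\cdot\bar{t}_m^G]$, further reduces $N_f$-classes to intermediate quotients and ultimately to $H\underline{\mathbb{Z}}$.

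The main obstacle is the leftover $[\bpgm{m-1}^G]$ term: although $\bpgm{m-1}$ has strictly smaller fp-type, its class is not obviously torsion in $K_0(\fp{h})$ on its own. My plan is to carry out a secondary descending induction on $m$, reapplying Corollary \ref{cor:ktheoryoffiltration} to $\bpgm{m-1},\bpgm{m-2},\ldots$ down to $\bpgm{0}^G=H\underline{\mathbb{Z}}$, and to verify that the aggregate coefficient of each residual class vanishes modulo torsion. The expected key input is Corollary \ref{cor:bpgmodd}: by Lemma \ref{lem:introlemma}, torsion in $K_0(\fp{h})$ is detected by vanishing pairing with every type-$(h+1)$ finite spectrum under the Levy Euler characteristic, and the odd-dimensionality of $\pi_*^e\bpgm{m}/(2,v_1,\ldots,v_h)$ encoded by the Gaussian binomial formula should enforce the divisibilities required for the cascade of error terms to collapse modulo torsion.
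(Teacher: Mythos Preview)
Your overall architecture---induction on $|G|$, applying Corollary~\ref{cor:ktheoryoffiltration} to $M=\bpgm{m}$ with $\bar{x}=\bar{t}_m^G$, and then arguing that all residual terms are torsion---is the same as the paper's. But there are two genuine problems in how you propose to discharge the residual terms.

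First, your proposed endgame is wrong. You assert that ``torsion in $K_0(\fp{h})$ is detected by vanishing pairing with every type-$(h+1)$ finite spectrum under the Levy Euler characteristic,'' and plan to invoke Corollary~\ref{cor:bpgmodd} to force the required divisibilities. But nothing in the paper (or elsewhere) establishes that the pairing $K_0(\fp{h})\otimes K_0(\Sp^\omega_{>h})\to\Z$ is nondegenerate on the $\fp{h}$ side modulo torsion; Burklund--Levy's result concerns the \emph{other} factor. So vanishing of all Euler characteristics does not, as far as we know, imply torsion in $K_0(\fp{h})$, and the Gaussian binomial input is irrelevant here.

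Second, you have overlooked that the ``main obstacle'' $[\bpgm{m-1}^G]$ is already handled by the proposition you cite for the $N_f$ terms. Proposition~\ref{prop:ktheoryclassestorsion}, applied with $M=\bpgm{m}$ and the full list $\bar{t}_1^G,\ldots,\bar{t}_m^G$, has as its hypothesis that the full quotient $\bpgm{m}/(G\cdot\bar{t}_1^G,\ldots,G\cdot\bar{t}_m^G)^H=H\underline{\Z}^H=\Z$ has torsion $K$-class for all $H$. This holds because $[\Z]=0$ in $K_0(\fp{h})$ for $h\ge 1$, via the cofiber sequence $\Sigma^2 ku\to ku\to H\Z$. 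The proposition then outputs that \emph{every} partial quotient $[\bpgm{m}/(G\cdot\bar{t}_1^G,\ldots,G\cdot\bar{t}_i^G)^H]$ is torsion, including in particular $[\bpgm{m-1}^H]$. The same argument (it is literally the $i=0$ step of the downward induction inside the proof of Proposition~\ref{prop:ktheoryclassestorsion}) shows that all the nonconstant-$f$ terms in Corollary~\ref{cor:ktheoryoffiltration} are torsion as well. So the relation $2[\bpgm{m}^{C_{2^k}}]\equiv[\bpgm{m}^{C_{2^{k-1}}}]$ drops out immediately, with no secondary induction on $m$ and no Euler-characteristic bookkeeping needed. Your ``secondary descending induction on $m$'' is exactly what Proposition~\ref{prop:ktheoryclassestorsion} already packages; the missing observation is that its base case is simply $[\Z]=0$.
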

\begin{proof}
    Since 
    \[BP^{(\!(G)\!)}\langle m\rangle /(G\cdot \overline{t}_1^G,\ldots, G\cdot \overline{t}_m^G)^H=\Z\]
    for all $H\subset G$, and the cofiber sequence
    \[\Sigma^2 ku\to ku\to\Z\]
    implies that $[\Z]=0\in K_0(\fp{n})$ for all $n\ge1$, applying the previous proposition shows that 
    \[2[BP^{(\!(G)\!)}\langle m\rangle ^{C_{2^k}}]\equiv [BP^{(\!(G)\!)}\langle m\rangle^{C_{2^{k-1}}}]\]
    for all $1\le k\le n$, modulo torsion.
    Applying this inductively yields the result.
\end{proof}

\subsection{Euler Characteristics} Since $\Z$ is torsion free, Levy's Euler characteristic vanishes on torsion $K$-theory classes, and Theorem \ref{thm:mainktheoryrelation} already gives us the relation
\[|G|\cdot\chi_{\bpgm{m}^G}=\chi_{\bpgm{m}^e}.\]
We will in fact relate the Euler characteristics $\chi_{\bpgm{m}^e}$ and $\chi_{BP\ip{h}}$, by comparing their values on generalized Moore spectra. This hinges on the following result of Burklund--Levy.

\begin{theorem}[Burklund--Levy]\label{thm:burklundlevy}
    Let $X,Y\in\fp{h}$. Then $\chi_X=\chi_Y$ if and only if 
    \[\chi_X(\mathbb{S}/(2^{i_0},\ldots,v_n^{i_h}))=\chi_Y(\mathbb{S}/(2^{i_0},\ldots,v_h^{i_h}))\]
    for some generalized Moore spectrum $\mathbb{S}/(2^{i_0},\ldots,v_h^{i_h})$.
\end{theorem}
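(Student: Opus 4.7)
The ``only if'' direction is trivial. For the converse, the plan is to consider the difference $\phi := \chi_X - \chi_Y$ as a homomorphism $K_0(\Sp_{>h}^\omega) \to \Z$, and to show that if $\phi$ vanishes on the class $[V]$ of some generalized Moore spectrum $V = \mathbb{S}/(2^{i_0},\ldots,v_h^{i_h})$, then $\phi$ is identically zero.

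The key input is the Burklund--Levy theorem, recalled in the introduction, that $\chi_{BP\langle h\rangle}$ induces an isomorphism $K_0(\Sp_{>h}^\omega) \otimes \mathbb{Q} \xrightarrow{\sim} \mathbb{Q}$. Since $\Z$ is torsion-free, the homomorphism $\phi$ vanishes if and only if its rationalization $\phi \otimes \mathbb{Q}$ does. Hence it suffices to exhibit a single element of $K_0(\Sp_{>h}^\omega)$ on which $\phi$ vanishes by hypothesis and which maps to a nonzero class in $K_0 \otimes \mathbb{Q}$; the class $[V]$ is a natural candidate.

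To verify this, I would compute $\chi_{BP\langle h\rangle}(V)$ directly. Since the sequence $(2, v_1, \ldots, v_h)$ is regular in $BP\langle h\rangle_* = \Z_{(2)}[v_1,\ldots,v_h]$, the quotient
\[
\pi_*(BP\langle h\rangle \otimes V) \;\cong\; BP\langle h\rangle_* / (2^{i_0}, v_1^{i_1}, \ldots, v_h^{i_h})
\]
is a finite graded abelian group concentrated in even degrees, and a basis count gives its total $2$-adic log order as $\prod_{j=0}^h i_j$. Consequently $\chi_{BP\langle h\rangle}(V) = \prod_{j=0}^h i_j > 0$, so $[V]$ maps to a nonzero element of $K_0(\Sp_{>h}^\omega) \otimes \mathbb{Q} \cong \mathbb{Q}$ and thus spans it as a $\mathbb{Q}$-vector space.

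Combining, $\phi([V]) = 0$ together with $[V]$ spanning $K_0(\Sp_{>h}^\omega) \otimes \mathbb{Q}$ forces $\phi \otimes \mathbb{Q} = 0$, and hence $\phi = 0$, as required. The principal obstacle is supplying the Burklund--Levy rational isomorphism itself, whose proof requires a genuinely nontrivial analysis of the thick and stable subcategory structure of $\Sp_{>h}^\omega$; granting that input, the remaining arguments reduce to an elementary computation with Euler characteristics of finite modules.
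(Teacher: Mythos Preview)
Your proposal is correct and follows essentially the same approach as the paper's own proof: invoke the Burklund--Levy rational isomorphism $\chi_{BP\langle h\rangle}\colon K_0(\Sp^\omega_{>h})\otimes\mathbb{Q}\xrightarrow{\sim}\mathbb{Q}$, check that any generalized Moore spectrum has nonzero image (hence rationally generates the source), and use torsion-freeness of $\Z$ to conclude that $\chi_X-\chi_Y$ vanishes. The only difference is that you compute $\chi_{BP\langle h\rangle}(V)=\prod_j i_j$ explicitly inside the argument, whereas the paper records this separately as Lemma~\ref{lemma:eulercharacteristicbpn} and in the proof simply asserts nonvanishing.
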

\begin{proof}
    Burklund--Levy show that the map
    \[\chi_{BP\ip{h}}:K_0(\Sp^\omega_{> h})\to\Z\]
    is an isomorphism rationally \cite{burklundlevy} (see \cite[Theorem 1.5, Proposition 7.8, and Remark 7.9]{levy}). Since $\chi_{BP\ip{h}}$ evaluated on any generalized Moore spectrum of type $h+1$ is nonzero, it follows also that the source is generated rationally by any generalized Moore spectrum. Since $\chi_X$ and $\chi_Y$ land in the torsion free abelian group $\Z$, the claim follows.
\end{proof}

\begin{lemma}\label{lemma:eulercharacteristicbpn}
If $\mathbb{S}/(2^{i_0},\ldots,v_n^{i_h})$ is a generalized Moore spectrum, then 
\[\chi_{BP\ip{h}}(\mathbb{S}/(2^{i_0},\ldots,v_h^{i_h}))=\prod\limits_{j=0}^hi_j\]
\end{lemma}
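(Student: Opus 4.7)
The plan is to compute the homotopy groups of $BP\langle h\rangle \otimes \mathbb{S}/(2^{i_0},v_1^{i_1},\ldots,v_h^{i_h})$ directly and then apply the formula from Lemma \ref{lem:introlemma}.

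First I would note that since $\mathbb{S}/(2^{i_0},\ldots,v_h^{i_h})$ is by hypothesis a generalized Moore spectrum, it is equivalent to the iterated cofiber of the self-maps $2^{i_0}, v_1^{i_1}, \ldots, v_h^{i_h}$ of $\mathbb{S}$, and hence
\[
BP\langle h\rangle \otimes \mathbb{S}/(2^{i_0},\ldots,v_h^{i_h}) \simeq BP\langle h\rangle/(2^{i_0},v_1^{i_1},\ldots,v_h^{i_h}),
\]
the iterated cofiber of the multiplication-by-$v_j^{i_j}$ self maps on $BP\langle h\rangle$ (where $v_0:=2$). Since $\pi_*BP\langle h\rangle = \mathbb{Z}_{(2)}[v_1,\ldots,v_h]$ is a polynomial ring and $(2,v_1,\ldots,v_h)$ is a regular sequence, the sequence $(2^{i_0},v_1^{i_1},\ldots,v_h^{i_h})$ is also regular, so the Tor spectral sequence for this iterated cofiber collapses and yields
\[
\pi_*\bigl(BP\langle h\rangle \otimes \mathbb{S}/(2^{i_0},\ldots,v_h^{i_h})\bigr) \;\cong\; \mathbb{Z}_{(2)}[v_1,\ldots,v_h]/(2^{i_0},v_1^{i_1},\ldots,v_h^{i_h}).
\]

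Next I would observe that, since the generators $v_j$ all live in even degrees, these homotopy groups are concentrated in even degrees. Consequently the signs in Lemma \ref{lem:introlemma} all become positive, and
\[
\chi_{BP\langle h\rangle}\bigl(\mathbb{S}/(2^{i_0},\ldots,v_h^{i_h})\bigr) \;=\; \log_2\bigl|\mathbb{Z}_{(2)}[v_1,\ldots,v_h]/(2^{i_0},v_1^{i_1},\ldots,v_h^{i_h})\bigr|.
\]
The quotient ring on the right has an evident $\mathbb{Z}/2^{i_0}$-basis consisting of monomials $v_1^{a_1}\cdots v_h^{a_h}$ with $0 \le a_j < i_j$, of which there are exactly $\prod_{j=1}^h i_j$. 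Its order is therefore $2^{i_0 \cdot \prod_{j=1}^h i_j}$, whose base-$2$ logarithm is $\prod_{j=0}^h i_j$, completing the proof.

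There is no real obstacle here: the argument is a direct computation relying on (a) the regularity of the sequence $(2,v_1,\ldots,v_h)$ in $\pi_*BP\langle h\rangle$, which passes immediately to its powers, and (b) the purely even concentration of the resulting homotopy. The only point to be careful about is invoking the hypothesis that the generalized Moore spectrum actually exists, which is what allows us to identify the iterated cofiber above as a single smash factor.
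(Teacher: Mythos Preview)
Your proof is correct and follows essentially the same approach as the paper: compute $\pi_*(BP\langle h\rangle\otimes \mathbb{S}/(2^{i_0},\ldots,v_h^{i_h}))\cong \mathbb{Z}_{(2)}[v_1,\ldots,v_h]/(2^{i_0},\ldots,v_h^{i_h})$ via regularity, then apply the explicit formula of Lemma~\ref{lem:introlemma}. You have simply filled in the details (the regularity argument for the homotopy computation and the monomial count for the cardinality) that the paper leaves implicit.
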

\begin{proof}
    This follows from the fact that
    \[\pi_*(BP\ip{h}\otimes \mathbb{S}/(2^{i_0},\ldots,v_h^{i_h}))=\Z[v_1,\ldots,v_h]/(2^{i_0},\ldots,v_h^{i_h})\]
    together with the formula of Lemma \ref{lem:introlemma}.
\end{proof}

For $\chi_{\bpgm{m}^e}$, we have the following related formula.

\begin{proposition}\label{prop:keyEulercharacteristicrelation}
    Let $\mathbb{S}/(2^{i_0},\ldots,v_{h}^{i_{h}})$ be a generalized Moore spectrum, where $h=m|G|/2$. Then one has
    \[\chi_{\bpgm{m}^e}(\mathbb{S}/(2^{i_0},\ldots,v_{h}^{i_{h}}))=\prod\limits_{j=0}^{|G|/2-1}\binom{jm}{m}_2\cdot\prod\limits_{k=0}^{h}i_k.\]
    In particular,
    \[\chi_{\bpgm{m}^e}=\prod\limits_{j=0}^{|G|/2-1}\binom{jm}{m}_2\cdot\chi_{BP\ip{h}}\]
\end{proposition}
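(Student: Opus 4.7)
The plan is to evaluate $\chi_{\bpgm{m}^e}$ on the given Moore spectrum directly, using the regularity of Corollary \ref{cor:bpgmregularsequence} together with the $\F_2$-dimension count of Corollary \ref{cor:bpgmodd}, and then to deduce the global identity from Theorem \ref{thm:burklundlevy}.

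First I would identify the homotopy groups of $\bpgm{m}\otimes\mathbb{S}/(2^{i_0},\ldots,v_h^{i_h})$ as the quotient ring $\pi_*^e\bpgm{m}/(2^{i_0},v_1^{i_1},\ldots,v_h^{i_h})$, concentrated in even degrees. This uses that the underlying spectrum of $\bpgm{m}$ is complex oriented (through the map $BP\to BP^{(\!(G)\!)}$), so that each $v_k$ lifts to a class in $\pi_*^e\bpgm{m}$ and the self maps defining the Moore spectrum act on $\bpgm{m}$-homotopy by multiplication by the corresponding powers. Regularity of $(2^{i_0},v_1^{i_1},\ldots,v_h^{i_h})$, inherited from Corollary \ref{cor:bpgmregularsequence}, then allows the iterated cofiber sequences to produce the quotient ring on homotopy.

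Second, I would compute the total $\F_2$-dimension of this quotient. A standard Poincar\'e series manipulation (or equivalently, filtering by powers of each regular element) shows that passing from $(2,v_1,\ldots,v_h)$ to $(2^{i_0},v_1^{i_1},\ldots,v_h^{i_h})$ multiplies the total $\F_2$-dimension by $\prod_k i_k$, and Corollary \ref{cor:bpgmodd} identifies the base case as $\prod_j\binom{jm}{m}_2$. Since the homotopy is concentrated in even degrees, the alternating sum in Lemma \ref{lem:introlemma} collapses to this total dimension, yielding the first displayed formula.

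For the second statement, Theorem \ref{thm:burklundlevy} reduces the global identity of Euler characteristics to their agreement on any single generalized Moore spectrum of type $h+1$; such a spectrum exists by the periodicity theorem, and on it both $\chi_{\bpgm{m}^e}$ and $\prod_j\binom{jm}{m}_2\cdot\chi_{BP\ip{h}}$ evaluate to $\prod_j\binom{jm}{m}_2\cdot\prod_k i_k$ by the first formula and Lemma \ref{lemma:eulercharacteristicbpn}. The main subtlety lies in the first step: one must pin down a consistent set of $v_k$-lifts that simultaneously realize the Moore spectrum self maps and form the regular sequence of Corollary \ref{cor:bpgmregularsequence}. This should be a mild compatibility check, since regularity of the sequence depends only on the image of each lift modulo the ideal generated by the previous ones.
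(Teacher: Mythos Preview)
Your proposal is correct and follows essentially the same approach as the paper: use regularity (Corollary \ref{cor:bpgmregularsequence}) to identify the homotopy of the tensor product as the quotient ring, filter by powers of the $v_k$'s to extract the factor $\prod_k i_k$, apply Corollary \ref{cor:bpgmodd} for the base dimension, and deduce the second statement from Theorem \ref{thm:burklundlevy} together with Lemma \ref{lemma:eulercharacteristicbpn}. Your explicit flagging of the compatibility between the $v_k$-lifts realizing the Moore-spectrum self maps and those entering the regular sequence is a point the paper leaves implicit, and your resolution (that regularity depends only on the class of each $v_k$ modulo the ideal generated by the previous ones) is exactly right.
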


\begin{proof}
    By regularity, one has
\[\pi_*(\bpgm{m}^e\otimes \mathbb{S}/(2^{i_0},\ldots,v_{h}^{i_{h}}))=\pi_*(\bpgm{m}^e)/(2^{i_0},\ldots,v_{h}^{i_{h}})\]
The formula $\sum_i\log_2|\pi_i(-)|$ is additive under short exact sequences of evenly graded abelian $2$-groups, so it follows from filtering by powers of the $v_i$'s that
\[\chi_{\bpgm{m}^e}(\mathbb{S}/(2^{i_0},\ldots,v_{h}^{i_{h}}))=\bigg(\prod\limits_{k=0}^{h}i_k\bigg)\cdot \dim_{\F_2}\frac{\pi_*^e\bpgm{m}}{(2,v_1,\ldots,v_{h})}\]
and we conclude by applying Corollary \ref{cor:bpgmodd}. The second statement now follows directly from Theorem \ref{thm:burklundlevy}.
\end{proof}




\begin{corollary}\label{cor:fpgem|G|/2}
    $\bpgm{m}^H$ is \emph{fp} type $h$ for all $H\subset G$.
\end{corollary}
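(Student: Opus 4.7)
The plan is to combine Theorem \ref{thm:fptypem|G|/2}, which already provides the upper bound fp type $\le h$, with an Euler characteristic argument that rules out fp type strictly less than $h$. The underlying idea is that the relations proved in Theorem \ref{thm:mainktheoryrelation} and Proposition \ref{prop:keyEulercharacteristicrelation} together force $\chi_{\bpgm{m}^H}$ to be nonzero on a type $h+1$ Moore spectrum, whereas such a Moore spectrum becomes trivial in $K_0$ of the larger thick subcategory $\Sp^\omega_{>h-1}$, so the Euler characteristic could not extend.

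Concretely, by the Devinatz--Hopkins--Smith periodicity theorem, I would fix exponents $i_0,\ldots,i_h$ large enough that both the type-$h+1$ generalized Moore spectrum $V=\mathbb S/(2^{i_0},v_1^{i_1},\ldots,v_h^{i_h})$ and its type-$h$ truncation $V'=\mathbb S/(2^{i_0},\ldots,v_{h-1}^{i_{h-1}})$ exist, and $V'$ admits a $v_h^{i_h}$-self map with cofiber $V$. The degree $|v_h^{i_h}|=2i_h(2^h-1)$ is even, so the cofiber sequence $\Sigma^{|v_h^{i_h}|}V'\to V'\to V$ gives the $K$-theory identity $[V]=[V']-[V']=0$ in $K_0(\Sp^\omega_{>h-1})$. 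Consequently, if $\bpgm{m}^H$ were fp of type at most $h-1$, the Euler characteristic would extend to a group homomorphism $\chi_{\bpgm{m}^H}\colon K_0(\Sp^\omega_{>h-1})\to\Z$ and would therefore vanish on $V$.

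On the other hand, Theorem \ref{thm:mainktheoryrelation} gives the relation $|H|[\bpgm{m}^H]\equiv[\bpgm{m}^e]$ in $K_0(\fp{h})$ modulo torsion. Since $\Z$ is torsion-free, applying $\chi$ to $V\in\Sp^\omega_{>h}\subset\Sp^\omega_{>h-1}$ and invoking Proposition \ref{prop:keyEulercharacteristicrelation} yields
\[
|H|\,\chi_{\bpgm{m}^H}(V)=\chi_{\bpgm{m}^e}(V)=\prod_{j=0}^{|G|/2-1}\binom{jm}{m}_2\cdot\prod_{k=0}^h i_k,
\]
which is a strictly positive integer by Corollary \ref{cor:bpgmodd} (the Gaussian binomial factors are odd, hence nonzero). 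In particular $\chi_{\bpgm{m}^H}(V)\neq 0$, contradicting the vanishing conclusion of the previous paragraph. This forces the fp type of $\bpgm{m}^H$ to be at least $h$, and combined with Theorem \ref{thm:fptypem|G|/2} this gives equality.

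There is no real technical obstacle here; the entire proof is a bookkeeping exercise feeding earlier results into one another. The only mild point of care is ensuring that both $V$ and $V'$ may be simultaneously chosen so as to exist — but the argument only needs \emph{some} sufficiently large choice, and DHS periodicity delivers this without further input.
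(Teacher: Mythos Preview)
Your proof is correct and follows essentially the same approach as the paper: use Theorem~\ref{thm:fptypem|G|/2} for the upper bound, then derive a contradiction from the assumption of fp type $\le h-1$ by showing that a type-$(h+1)$ Moore spectrum $V$ would have $\chi_{\bpgm{m}^H}(V)=0$ (from the even-degree $v_h$-cofiber sequence) while Theorem~\ref{thm:mainktheoryrelation} and Proposition~\ref{prop:keyEulercharacteristicrelation} force it to be nonzero. The only cosmetic difference is that the paper unpacks the vanishing step by choosing $i_h$ large enough that $v_h^{i_h}$ acts by zero on $\pi_*(\bpgm{m}^H\otimes V')$ and writing out the resulting short exact sequence, whereas you phrase the same even-degree cancellation as $[V]=0$ in $K_0(\Sp^\omega_{>h-1})$; your formulation is slightly cleaner since it avoids the dependence of $i_h$ on $\bpgm{m}^H$.
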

\begin{proof}
    By Theorem \ref{thm:fptypem|G|/2}, $BP^{(\!(G)\!)}\ip{m}^H$ is \emph{fp} type $\le h$. To see that $BP^{(\!(G)\!)}\ip{m}^H$ is not \emph{fp} type $<h$, we may assume for the sake of contradiction that there exists a generalized Moore spectrum
\[F:=\mathbb{S}/(2^{i_0},v_1^{i_1},\ldots,v_{h-1}^{i_{h-1}})\]
of chromatic type $h$ such that $BP^{(\!(G)\!)}\ip{m}^H\otimes F$ is $\pi$-finite. We may choose a $v_{h}$-self map $v:=v_{h}^{i_{h}}$ of $F$ that acts by zero on $\pi_*(BP^{(\!(G)\!)}\ip{m}^H\otimes F)$, since the latter is concentrated in finitely many degrees, and $v$ must have nonzero degree. Since $v$ is in even degree, this gives a short exact sequence
\[0\to \pi_*BP^{(\!(G)\!)}\ip{m}^H\otimes F\to \pi_*BP^{(\!(G)\!)}\ip{m}^H\otimes F/v\to \pi_*\Sigma^{2k+1}BP^{(\!(G)\!)}\ip{m}^H\otimes F\to0\]
for some $k$. Passing to Euler characteristics, this gives 
\[\chi_{BP^{(\!(G)\!)}\ip{m}^H}(F/v)=\chi_{BP^{(\!(G)\!)}\ip{m}^H}(F)-\chi_{BP^{(\!(G)\!)}\ip{m}^H}(F)=0\]
From Theorem \ref{thm:mainktheoryrelation}, we have that
\[\chi_{\bpgm{m}^e}(F/v)=|H|\cdot\chi_{BP^{(\!(G)\!)}\ip{m}^H}(F/v)=0\]
which contradicts Proposition \ref{prop:keyEulercharacteristicrelation}.
\end{proof}

\section{Chromatic localizations of $MU^{(\!(G)\!)}$-modules}\label{sec:telescope} In this section, we show that chromatic localizations force $MU^{(\!(G)\!)}$-modules to be Borel-complete. In other words, after applying chromatic localizations, there is no difference between taking genuine fixed points and taking homotopy fixed points of $MU^{(\!(G)\!)}$-modules. We also see that the fixed points of an $MU^{(\!(G)\!)}$-module satisfies the condition of the telescope conjecture.

\begin{proposition}\label{prop:chromaticlocalizations}
    Let $M\in\mathrm{Mod}(MU^{(\!(G)\!)})$. For all $h\ge0$, for all \[E\in\{K(h),T(h),L_h^f\mathbb{S},L_h\mathbb{S}\}\]
    the map
    \[\mathrm{inf}_e^G(E)\otimes M\to\mathrm{inf}_e^G(E)\otimes F(EG_+,M)\]
    is am equivalence of $G$-spectra. Moreover, the map $\mathrm{inf}_e^GL_h^f\mathbb{S}\otimes M\to\mathrm{inf}_e^GL_h\mathbb{S}\otimes M$ is an equivalence of $G$-spectra.
\end{proposition}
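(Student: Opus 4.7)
The plan is to show that the cofiber of each of the two maps is zero in $\Sp^G$, and to verify this by checking vanishing of all geometric fixed points $\Phi^H$ for $H\subset G$. Two structural inputs will drive the argument: (i) for every nontrivial $H\subset G$, $\Phi^H(MU^{(\!(G)\!)})$ is a wedge of suspensions of $H\F_2$, which uses the HHR double coset formula for norms together with the fact that $C_2$ is the unique order-$2$ subgroup of the cyclic $2$-group $G$; and (ii) every spectrum $E$ in the list is annihilated by $H\F_2$, i.e.\ $E\otimes H\F_2\simeq 0$.

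For the first statement, let $C$ denote the cofiber of the Borel completion map $M\to F(EG_+,M)$. Since $F(EG_+,-)$ is lax symmetric monoidal, $F(EG_+,M)$ canonically carries an $MU^{(\!(G)\!)}$-module structure and the Borel completion map is $MU^{(\!(G)\!)}$-linear, so $C$ is an $MU^{(\!(G)\!)}$-module. Nonequivariantly $EG$ is contractible, so $C^e\simeq 0$. For $H\neq\{e\}$, the double coset formula gives $\Res^G_H MU^{(\!(G)\!)}\simeq (MU^{(\!(H)\!)})^{\otimes [G:H]}$, and applying $\Phi^H$ together with the standard identification $\Phi^H N_{C_2}^H\simeq \Phi^{C_2}$ yields
\[
\Phi^H(MU^{(\!(G)\!)})\simeq (\Phi^{C_2}MU_\R)^{\otimes [G:H]}\simeq MO^{\otimes [G:H]},
\]
which is a wedge of suspensions of $H\F_2$. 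Consequently $\Phi^H(C)$, as a module over this wedge, is itself a wedge of suspensions of $H\F_2$. Since $\inf_e^G$ is the constant-$G$-spectrum functor (the left adjoint to $(-)^G$), one has $\Phi^H(\inf_e^G(E))\simeq E$, and combining with $E\otimes H\F_2\simeq 0$ forces $\Phi^H(\inf_e^G(E)\otimes C)\simeq 0$ for every $H$.

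For the second statement, let $F'$ denote the fiber of $L_h^f\mathbb{S}\to L_h\mathbb{S}$; the task reduces to showing $\Phi^H(\inf_e^G(F')\otimes M)\simeq 0$ at every $H$. Underlying, $M^e$ is a module over $\Res^G_e MU^{(\!(G)\!)}\simeq MU^{\otimes |G|/2}$, hence an $MU$-module, and the telescope conjecture is classical for $MU$-modules (both $L_h^f$ and $L_h$ of an $MU$-module are computed by the same $v_h$-inverted Landweber quotient), so $F'\otimes M^e\simeq 0$. For nontrivial $H$, the previous paragraph already shows $\Phi^H(M)$ is a wedge of suspensions of $H\F_2$, and $L_h^f H\F_2\simeq L_h H\F_2\simeq 0$ gives $F'\otimes \Phi^H(M)\simeq 0$.

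The main technical step is the identification $\Phi^H(MU^{(\!(G)\!)})\simeq MO^{\otimes [G:H]}$ at every nontrivial $H$, which relies crucially on the cyclic $2$-group hypothesis on $G$; once this and the resulting $H\F_2$-module structure on the relevant geometric fixed points are in hand, the proposition collapses to two elementary vanishing statements---that the chromatic localizations in the list kill $H\F_2$, and that the telescope conjecture holds for $MU$-modules---both of which are standard.
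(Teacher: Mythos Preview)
Your proof is correct and follows essentially the same approach as the paper: check the maps on geometric fixed points $\Phi^H$, use that $\Phi^H MU^{(\!(G)\!)}$ is an $H\F_2$-algebra for nontrivial $H$ (the paper simply observes it is an algebra over $\Phi^H MU^{(\!(H)\!)}=MO$ rather than computing it as $MO^{\otimes[G:H]}$), and conclude from $H\F_2$-acyclicity of the listed $E$'s together with the telescope conjecture for $MU$-modules. The only cosmetic difference is that you package things via the cofiber/fiber, whereas the paper checks source and target separately; also, you need not assert that $\Phi^H(C)$ is a wedge of suspensions of $H\F_2$---being an $H\F_2$-module already suffices for the vanishing.
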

\begin{proof}
It suffices to check that each of the maps induces an equivalence on geometric fixed points $\Phi^H$ for all $H\subset G$.

    For each $E\in\{K(h),T(h),L_h^f\mathbb{S},L_h\mathbb{S}\}$, the map
    \[\mathrm{inf}_e^G(E)\otimes M\to\mathrm{inf}_e^G(E)\otimes F(EG_+,M)\]
    induces an equivalence of underlying spectra since the functor $F(EG_+,-)$ restricts to the identity functor on underlying spectra. The map $\mathrm{inf}_e^GL_h^f\mathbb{S}\otimes M\to\mathrm{inf}_e^GL_h\mathbb{S}\otimes M$ induces an equivalence on underlying spectra because $\mathrm{Res}^G_eM$ is an $MU$-module, and $MU$-modules satisfy the telescope conjecture at all heights.

    Applying $\Phi^H$ for $H\neq \{e\}$, one has that $\Phi^HM$ and $\Phi^HF(EG_+,M)$ are modules over $\Phi^HMU^{(\!(G)\!)}$. The ring spectrum $\Phi^HMU^{(\!(G)\!)}$ is an algebra over $\Phi^HMU^{(\!(H)\!)}=MO$, which is an $H\F_2$-algebra. Since each of the spectra in $\{K(h),T(h),L_h^f\mathbb{S},L_h\mathbb{S}\}$ are $H\F_2$-acyclic, applying $\Phi^H$ to any of the maps in question gives a map with both source and target equal to $0$, and therefore gives an equivalence.
\end{proof}

\begin{corollary}\label{cor:chromaticlocalizations}
  Let $M\in\mathrm{Mod}(MU^{(\!(G)\!)})$. For all $n\ge0$, $H\subset G$, and for all $L\in\{L_{K(h)},L_{T(h)},L_h^f,L_h\}$, the map
    \[L(M^H)\to L(M^{hH})\]
    is an equivalence of spectra. Moreover the map $L_h^f(M^H)\to L_h(M^H)$ is an equivalence of spectra.
\end{corollary}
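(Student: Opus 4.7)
The plan is to deduce the corollary from Proposition \ref{prop:chromaticlocalizations} by taking $H$-fixed points and combining this with the isotropy separation cofiber sequence
\[
  X_{hH} \longrightarrow X^H \longrightarrow \Phi^H X,
\]
valid for any $G$-spectrum $X$ and subgroup $H \subseteq G$, obtained by applying $(-)^H$ to the $H$-equivariant cofiber sequence $EH_+ \wedge X \to X \to \tilde{EH} \wedge X$.

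For the first statement, set $N := \mathrm{fib}(M \to F(EG_+, M))$, so that $N^H$ is the fiber of the canonical map $M^H \to M^{hH}$; it suffices to show $N^H$ is $E$-acyclic for each $H \subseteq G$ and each $E \in \{K(h), T(h), L_h^f\mathbb{S}, L_h\mathbb{S}\}$. Since $M \to F(EG_+, M)$ is the identity on underlying spectra, $\mathrm{Res}^G_e N \simeq 0$, and hence $N_{hH} \simeq 0$. The isotropy separation sequence therefore degenerates to an equivalence $N^H \simeq \Phi^H N$, and by exactness of $\Phi^H$, we have $\Phi^H N \simeq \mathrm{fib}(\Phi^H M \to \Phi^H F(EG_+, M))$. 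The argument in the proof of Proposition \ref{prop:chromaticlocalizations} shows that, for $H \neq \{e\}$, both $\Phi^H M$ and $\Phi^H F(EG_+, M)$ are modules over the $H\mathbb{F}_2$-algebra $\Phi^H MU^{(\!(G)\!)}$, and each $E$ in our list is $H\mathbb{F}_2$-acyclic; for $H = \{e\}$ we have $\Phi^e N = 0$ directly. In every case $\Phi^H N$ is $E$-acyclic, giving $L_E(M^H) \simeq L_E(M^{hH})$.

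For the second statement, let $C := \mathrm{fib}(L_h^f\mathbb{S} \to L_h\mathbb{S})$; then showing $C \otimes M^H \simeq 0$ is equivalent to $L_h^f(M^H) \simeq L_h(M^H)$. Apply the isotropy separation sequence to $M$ itself, so that $M^H$ is an extension of $\Phi^H M$ by $M_{hH}$; it suffices to show each of these is $C$-acyclic. For $H \neq \{e\}$, $\Phi^H M$ is an $H\mathbb{F}_2$-module by the above, and $H\mathbb{F}_2$ is $C$-acyclic since both $L_h^f\mathbb{S}$ and $L_h\mathbb{S}$ are $H\mathbb{F}_2$-acyclic (again from the proof of Proposition \ref{prop:chromaticlocalizations}). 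For $M_{hH}$, note that $\mathrm{Res}^G_e M$ is an $MU$-module, hence $C$-acyclic by the telescope conjecture for $MU$-modules, and $M_{hH}$ is built from $\mathrm{Res}^G_e M$ via the skeletal filtration of $EH$ as a filtered colimit of iterated cofibers. Since $-\otimes C$ preserves cofibers and filtered colimits, $M_{hH}$ is $C$-acyclic, and hence so is $M^H$.

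The main subtlety is the identification of $\Phi^H F(EG_+, M)$ (equivalent to the Tate construction $M^{tH}$) as a module over $\Phi^H MU^{(\!(G)\!)}$ and thus as an $H\mathbb{F}_2$-module, which is precisely what the proof of Proposition \ref{prop:chromaticlocalizations} establishes. Beyond this, the argument is a formal consequence of the symmetric monoidality of $\Phi^H$, the isotropy separation cofiber sequence, and the vanishing of $\mathrm{Res}^G_e N$.
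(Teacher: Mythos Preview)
Your argument is correct, but it takes a more roundabout path than the paper's. The paper simply applies $(-)^H$ to the equivalences of $G$-spectra established in Proposition~\ref{prop:chromaticlocalizations} and invokes the projection formula
\[
  (\mathrm{inf}_e^G(X)\otimes Y)^H \simeq X\otimes Y^H,
\]
immediately yielding $E\otimes M^H\simeq E\otimes M^{hH}$ for each $E\in\{K(h),T(h),L_h^f\mathbb{S},L_h\mathbb{S}\}$ and $L_h^f\mathbb{S}\otimes M^H\simeq L_h\mathbb{S}\otimes M^H$; the corollary then follows since $L_h^f$ and $L_h$ are smashing and tensoring with $K(h)$ or $T(h)$ detects the corresponding localization.

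By contrast, you do not actually use the \emph{conclusion} of Proposition~\ref{prop:chromaticlocalizations}; instead you re-run its proof, reorganized around the isotropy separation sequence $X_{hH}\to X^H\to\Phi^H X$ applied to $N=\mathrm{fib}(M\to F(EG_+,M))$ (first part) and to $M$ (second part). This works, and the key inputs are identical---namely that $\Phi^H MU^{(\!(G)\!)}$ is an $H\F_2$-algebra for $H\neq\{e\}$, that the relevant spectra are $H\F_2$-acyclic, and that $MU$-modules satisfy telescope---but the projection-formula route is a genuine one-liner once Proposition~\ref{prop:chromaticlocalizations} is in hand. Your approach has the minor advantage of making the role of isotropy separation explicit, but it duplicates work already encapsulated in the proposition.
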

\begin{proof}
    This is an immediate corollary of the previous proposition by use of the projection formula $(\mathrm{inf}_e^G(X)\otimes Y)^G\simeq X\otimes Y^G$, for $X\in\Sp$ and $Y\in\Sp^G$.
\end{proof}

\begin{theorem}
    The map $\bpgm{m}\to F(EG_+,\bpgm{m})$ induces an equivalence on $\underline{\pi}_i$ for all $i$ sufficiently large.
\end{theorem}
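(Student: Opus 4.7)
The plan is to reduce, via the finiteness of the subgroup lattice of $G$, to showing that for each $H \subseteq G$ the map $\bpgm{m}^H \to F(EG_+,\bpgm{m})^H \simeq \bpgm{m}^{hH}$ is an isomorphism on $\pi_i$ for $i$ above some bound $N_H$; taking the maximum then recovers the statement for the Mackey functors $\underline{\pi}_i$.

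Fixing such an $H$, I would consider the commutative square
\[
\begin{tikzcd}
\bpgm{m}^H \arrow[r]\arrow[d] & \bpgm{m}^{hH} \arrow[d]\\
L_h^f \bpgm{m}^H \arrow[r,"\sim"] & L_h^f\bpgm{m}^{hH}.
\end{tikzcd}
\]
The bottom is an equivalence by Corollary \ref{cor:chromaticlocalizations}, and by Corollary \ref{cor:fpgem|G|/2} the spectrum $\bpgm{m}^H$ is fp of type $h$, so the Hahn--Wilson theorem gives that the left vertical map has bounded-above fiber. Commutativity then forces the composite $\bpgm{m}^H \to L_h^f\bpgm{m}^{hH}$ to have bounded-above fiber, and by a standard cofiber-sequence argument the theorem reduces to showing that the right vertical $\bpgm{m}^{hH} \to L_h^f\bpgm{m}^{hH}$ also has bounded-above fiber.

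For this final step, I would first observe that for any finite spectrum $F$ of type $h+1$, the tensor $\bpgm{m}^{hH} \otimes F \simeq (\bpgm{m} \otimes \mathrm{inf}_e^G F)^{hH}$ sits at the abutment of the first-quadrant homotopy fixed point spectral sequence with $E_2$-page $H^s(H; \pi_t (\bpgm{m}^e \otimes F))$. Since $\bpgm{m}^e \otimes F$ is $\pi$-finite by fp-ness of the underlying spectrum $\bpgm{m}^e$, the $E_2$-page vanishes for $t$ above a bound, so $\bpgm{m}^{hH} \otimes F$ is bounded above.

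The main obstacle I anticipate is upgrading this individual boundedness to boundedness of the full $L_h^f$-acyclic fiber $\bpgm{m}^{hH} \otimes C_h\mathbb{S}$, since $C_h\mathbb{S}$ is only a filtered colimit of finite type $h+1$ spectra and bounds on $\bpgm{m}^{hH} \otimes F$ need not be uniform. I would close this gap by exploiting the $\bpgm{m}^H$-module structure on the fiber together with a Moore-spectrum presentation of $C_h\mathbb{S}$ involving a $v_h$-self map: since $L_h^f$-acyclicity of a module over the fp type $h$ ring $\bpgm{m}^H$ forces $v_h$-nilpotence, one obtains the required uniform bound.
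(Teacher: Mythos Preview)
Your setup is essentially the paper's (with the square transposed), and you correctly isolate the one nontrivial step: showing that $\bpgm{m}^{hH}\to L_h^f\bpgm{m}^{hH}$ has bounded-above fiber. But your proposed fix for this step does not go through. You invoke ``the $\bpgm{m}^H$-module structure on the fiber together with a Moore-spectrum presentation,'' yet $\bpgm{m}$ is not known to admit any ring structure for general $G$ and $m$ (the paper says so explicitly), so there is no $\bpgm{m}^H$-module structure to exploit. The surrounding sentence about $v_h$-nilpotence and uniform bounds is correspondingly too vague to carry the argument.

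The paper avoids this problem by a maneuver you nearly have in hand. Using that $L_h^f$ is smashing and that geometric fixed points of $MU^{(\!(G)\!)}$-modules are $H\F_2$-modules (hence $L_h^f$-acyclic), one checks that the natural maps
\[
\mathrm{inf}_e^GL_h^f\mathbb{S}\otimes F(EG_+,\bpgm{m})\;\simeq\;F(EG_+,\mathrm{inf}_e^GL_h^f\mathbb{S}\otimes\bpgm{m})
\]
are equivalences of $G$-spectra. So the map you want to analyze is, on $H$-fixed points, the map $\bpgm{m}^{hH}\to (L_h^f\bpgm{m}^e)^{hH}$ induced by applying $(-)^{hH}$ to the \emph{underlying} localization $\bpgm{m}^e\to L_h^f\bpgm{m}^e$. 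Now your own HFPSS argument applies cleanly: since $\bpgm{m}^e$ is fp of type $h$, that underlying map is a $\pi_i$-iso for $i$ large, hence so is the induced map on group cohomology, and convergence of the HFPSS finishes. No colimit-of-type-$(h{+}1)$-spectra argument or module structure is needed.
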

\begin{proof}
    Consider the following commutative square of $G$-spectra
   \[
    \begin{tikzcd}
        \bpgm{m}\arrow[d]\arrow[r]&\mathrm{inf}_e^GL_h^f\mathbb{S}\otimes\bpgm{m}\arrow[d]\\
        F(EG_+,\bpgm{m})\arrow[r]&\mathrm{inf}_e^GL_h^f\mathbb{S}\otimes F(EG_+,\bpgm{m})
    \end{tikzcd}
    \]
    The top map induces an isomorphism on $\underline{\pi}_i$ for all $i$ sufficiently large by \cite[Theorem 3.1.3]{hahnwilson} because $\bpgm{m}^H$ is an \emph{fp} spectrum of type $h$ for all $H$. The righthand map is an equivalence by Proposition \ref{prop:chromaticlocalizations}, so it suffices to show the bottom map induces an equivalence on $\underline{\pi}_i$ for all $i$ sufficiently large.

    We claim first that the bottom map may be identified with the map
    \[F(EG_+,\bpgm{m})\to F(EG_+,\mathrm{inf}_e^GL_h^f\mathbb{S}\otimes\bpgm{m})\]
    Indeed this follows from the fact that the canonical maps
\[\mathrm{inf}_e^GL_h^f\mathbb{S}\otimes F(EG_+,\bpgm{m})\to \mathrm{inf}_e^GL_h^f\mathbb{S}\otimes F(EG_+,\mathrm{inf}_e^GL_h^f\mathbb{S}\otimes\bpgm{m}) \]
\[ F(EG_+,\mathrm{inf}_e^GL_h^f\mathbb{S}\otimes\bpgm{m})\to \mathrm{inf}_e^GL_h^f\mathbb{S}\otimes F(EG_+,\mathrm{inf}_e^GL_h^f\mathbb{S}\otimes\bpgm{m})\]
are both equivalences. This may be checked as in the proof of Proposition \ref{prop:chromaticlocalizations}. The maps induce equivalences on underlying spectra, and each of the geometric fixed points $\Phi^H$ is a module over $\Phi^H(MU^{(\!(G)\!)})\otimes L_h^f\mathbb{S}$, which is zero as $\Phi^H(MU^{(\!(G)\!)})$ is an $H\F_2$-algebra.

    It remains to show that, for all $H\subset G$, the map of spectra \[\bpgm{m}^{hH}\to(L_h^f\bpgm{m})^{hH}\]
    induces an isomorphism on $\pi_i$ for $i$ sufficiently large. Since $\bpgm{m}^e$ is \emph{fp} of type $h$, it follows that $\bpgm{m}^e\to L_h^f\bpgm{m}^e$ induces an isomorphism on $\pi_i$ for $i$ sufficiently large, so that \[H^s(H;\pi_i\bpgm{m}^e)\to H^s(H;\pi_iL_h^f\bpgm{m}^e)\]
    is an isomorphism for $i$ sufficiently large, for all $s$. The claim now follows from the convergence of the homotopy fixed point spectral sequence.
\end{proof}

\begin{corollary}
    For all $H\subset G$, the spectrum $\tau_{\ge0}(\bpgm{m}^{hH})$ is an \emph{fp} spectrum of type $h$.
\end{corollary}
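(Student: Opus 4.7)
The plan is to extract the corollary from the preceding theorem via a short cofiber sequence argument. The theorem gives that $\bpgm{m}^H \to \bpgm{m}^{hH}$ induces an isomorphism on $\pi_i$ for all $i$ sufficiently large. Combined with the fact that $\tau_{\ge 0}(\bpgm{m}^{hH}) \to \bpgm{m}^{hH}$ is an isomorphism on $\pi_i$ for $i \ge 0$, and since $\bpgm{m}^H$ is connective, there is a canonical factorization $\bpgm{m}^H \to \tau_{\ge 0}(\bpgm{m}^{hH})$ which is also an isomorphism on $\pi_i$ in all sufficiently large degrees. Let $F$ denote the fiber of this map. Then $F$ is bounded above (by the isomorphism range) and bounded below (at $-1$, say, since source and target are connective), so $F$ has only finitely many nonvanishing homotopy groups.

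Next, I would verify that $\pi_i F$ is a finitely generated $\mathbb{Z}_2$-module for each $i$. By Corollary \ref{cor:fpgem|G|/2}, $\bpgm{m}^H$ is fp, so $H^*(\bpgm{m}^H; \mathbb{F}_2)$ is finite-dimensional in each degree and hence $\pi_*\bpgm{m}^H$ is degreewise finitely generated over $\mathbb{Z}_2$. The long exact sequence
\[\cdots \to \pi_{i+1}\tau_{\ge 0}(\bpgm{m}^{hH}) \to \pi_i F \to \pi_i \bpgm{m}^H \to \pi_i \tau_{\ge 0}(\bpgm{m}^{hH}) \to \cdots\]
together with the vanishing of $\pi_j F$ for $j$ above a fixed threshold then permits a downward induction: assuming $\pi_j F$ and $\pi_j \tau_{\ge 0}(\bpgm{m}^{hH})$ are finitely generated for $j > i$, both $\pi_i F$ and $\pi_i \tau_{\ge 0}(\bpgm{m}^{hH})$ sit between finitely generated $\mathbb{Z}_2$-modules and are therefore finitely generated.

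I would then invoke the thick subcategory property of fp spectra. Since $F$ is bounded, $2$-complete, and has degreewise finitely generated homotopy, its (finite) Postnikov tower exhibits $F$ as an iterated extension of Eilenberg--Mac~Lane spectra $HA$ with $A$ a finitely generated $\mathbb{Z}_2$-module. Each such $HA$ is fp of type $\le 0$, and since fp spectra are closed under extensions, $F$ is fp of type $\le 0 \le h$. The cofiber sequence $F \to \bpgm{m}^H \to \tau_{\ge 0}(\bpgm{m}^{hH})$ therefore forces $\tau_{\ge 0}(\bpgm{m}^{hH})$ to be fp of type $\le h$. For the lower bound, note that $F$ is $K(n)$-acyclic for all $n \ge 1$ (being bounded), so $K(h)_*\tau_{\ge 0}(\bpgm{m}^{hH}) \cong K(h)_*\bpgm{m}^H$, which is nonzero as $\bpgm{m}^H$ has fp type exactly $h$.

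The main obstacle I anticipate is the verification that $\pi_* F$ is degreewise finitely generated, but the downward induction in the long exact sequence handles this cleanly once the corresponding property of $\bpgm{m}^H$ is in hand. The only other point of care is the connectivity of $\bpgm{m}^H$, which follows from the slice-connectivity of $\bpgm{m}$ (its slice filtration starts in slice $0$).
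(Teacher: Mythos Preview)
Your overall strategy matches the paper's: show that the fiber $F$ of $\bpgm{m}^H \to \tau_{\ge 0}(\bpgm{m}^{hH})$ is an \emph{fp} spectrum of type $\le 0$, then use that $\fp{h}$ is thick. The lower-bound argument via $K(h)$-acyclicity of a bounded spectrum is also fine.

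There is, however, a genuine gap in your downward induction for finite generation. Write $A = \bpgm{m}^H$ and $B = \tau_{\ge 0}(\bpgm{m}^{hH})$. From the long exact sequence
\[
\cdots \to \pi_{i+1}B \to \pi_i F \to \pi_i A \to \pi_i B \to \pi_{i-1}F \to \cdots
\]
your inductive hypothesis (finite generation of $\pi_jF$ and $\pi_jB$ for $j>i$) together with finite generation of $\pi_iA$ does give $\pi_iF$ finitely generated, since $\pi_iF$ sits between a quotient of $\pi_{i+1}B$ and a submodule of $\pi_iA$. But the corresponding claim for $\pi_iB$ fails: $\pi_iB$ is an extension of a submodule of $\pi_{i-1}F$ by a quotient of $\pi_iA$, and $\pi_{i-1}F$ is \emph{not} covered by the inductive hypothesis. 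The dependence is genuinely circular and cannot be broken from the long exact sequence alone; one can construct maps of connective spectra $A \to B$ with $A$ of finite type, bounded fiber, and $\pi_0 B$ not finitely generated.

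What is missing is an independent argument that $\pi_i\bpgm{m}^{hH}$ is degreewise finitely generated for $i\ge 0$. The paper's proof is terser and leaves this implicit as well, asserting that the fiber lies in the thick subcategory generated by $\{\mathrm{Ind}_K^G H\underline{\Z}\}_{K\subset G}$; unpacking that claim requires exactly the same finite-generation input on $\underline{\pi}_i F(EG_+,\bpgm{m})$. One way to supply it is to show the homotopy fixed point spectral sequence for $\bpgm{m}^{hH}$ has a horizontal vanishing line at a finite page (so only finitely many finitely generated $E_\infty$-terms contribute to each total degree); this uses the specific structure of $\bpgm{m}$ rather than formal properties of the long exact sequence.
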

\begin{proof}
    By the theorem, the fiber of the map
    \[\bpgm{m}\to \tau_{\ge0}F(EG_+,\bpgm{m})\]
    is contained in the thick subcategory generated by $\{\mathrm{Ind}_H^G(H\underline{\Z})\}_{H\subset G}$, and therefore is an \emph{fp} spectrum of type $0$. The result then follows from the fact that $\fp{h}$ is a thick subcategory.
\end{proof}

\printbibliography

@article{HHR,
  title={On the non-existence of elements of Kervaire invariant one},
  author={M. A. Hill and M. J. Hopkins and D. C. Ravenel},
  journal={Annals of Mathematics},
  volume={184},
  number={1},
  pages={1--262},
  year={2016}
}

@article{MR,
  title={Brown-Comenetz duality and the Adams spectral sequence},
  author={Mark Mahowald and Charles Rezk},
  journal={Amer. J. Math},
  volume={184},
  pages={1153--1177},
  year={1999}
}

@article{HK,
title = {Real-oriented homotopy theory and an analogue of the Adams-Novikov spectral sequence},
journal = {Topology},
volume = {40},
number = {2},
pages = {317-399},
year = {2001},
author = {Po Hu and Igor Kriz}
}

@article{GM,
author = {J P C Greenlees and Lennart Meier},
title = {Gorenstein duality for real spectra},
volume = {17},
journal = {Algebraic and Geometric Topology},
number = {6},
publisher = {MSP},
pages = {3547 -- 3619}
}

@article{CHR,
title = {The homological slice spectral sequence in motivic and Real bordism},
journal = {Advances in Mathematics},
volume = {458},
pages = {109955},
year = {2024},
issn = {0001-8708},
author = {Christian Carrick and Michael A. Hill and Douglas C. Ravenel}}

@article{MUhomology,
    author = {Christian Carrick and Michael A. Hill},
    title = {On $MU$-homology of connective models of higher real $K$-theories},
    journal = {Proceedings of the American Mathematical Society},
    year = {2025},
    note = {To appear}
}

@article{carrickcofree,
  title={Cofreeness in real bordism theory and the Segal conjecture},
  author={Christian Carrick},
  journal={Proceedings of the American Mathematical Society},
volume = {150},
  year={2021},
number = {7},
pages = {3161--3175}
}

@article {hillmeier,
    AUTHOR = {Hill, Michael A. and Meier, Lennart},
     TITLE = {The {$C_2$}-spectrum {${\rm Tmf}_1(3)$} and its invertible
              modules},
   JOURNAL = {Algebr. Geom. Topol.},
  FJOURNAL = {Algebraic \& Geometric Topology},
    VOLUME = {17},
      YEAR = {2017},
    NUMBER = {4},
     PAGES = {1953--2011},
      ISSN = {1472-2747},
   MRCLASS = {55N34 (55P42)},
  MRNUMBER = {3685599},
MRREVIEWER = {Jorge Andres Devoto},
       DOI = {10.2140/agt.2017.17.1953},
       URL = {https://doi.org/10.2140/agt.2017.17.1953},
}

@article{fujii,
author = {Michikazu Fujii},
title = {Cobordism theory with reality},
volume = {18},
journal = {Math J. Okayama Univ.},
number = {2},
pages = {171 -- 188},
year = {1975}
}

@article {hswx,
    AUTHOR = {Hill, Michael A. and Shi, XiaoLin Danny and Wang, Guozhen and
              Xu, Zhouli},
     TITLE = {The slice spectral sequence of a {$C_4$}-equivariant height-4
              {L}ubin-{T}ate theory},
   JOURNAL = {Mem. Amer. Math. Soc.},
  FJOURNAL = {Memoirs of the American Mathematical Society},
    VOLUME = {288},
      YEAR = {2023},
    NUMBER = {1429},
     PAGES = {v+119},
      ISSN = {0065-9266,1947-6221},
      ISBN = {978-1-4704-7468-3; 978-1-4704-7571-0},
   MRCLASS = {55P91 (55P92 55Q10 55Q40 55Q91)},
  MRNUMBER = {4627086},
MRREVIEWER = {Guchuan\ Li},
       DOI = {10.1090/memo/1429},
       URL = {https://doi.org/10.1090/memo/1429},
}

@Article{adamsperiodicity,
 Author = {Adams, J. F.},
 Title = {A periodicity theorem in homological algebra},
 FJournal = {Proceedings of the Cambridge Philosophical Society},
 Journal = {Proc. Camb. Philos. Soc.},
 ISSN = {0008-1981},
 Volume = {62},
 Pages = {365--377},
 Year = {1966},
 Language = {English},
 zbMATH = {3260804},
 Zbl = {0163.01602}
}

@article{hhrc4,
    author = {Michael A. Hill and Michael J. Hopkins and Douglas C. Ravenel},
    title = {The slice spectral sequence for the $C_4$ analog of real
  $K$-theory},
    journal = {Forum Math.} ,
    volume = {29},
    issue = {2},
    pages = {383--447},
    year = {2017}
}

@Article{adamsjx4,
 Author = {Adams, J. F.},
 Title = {On the groups {{\(J(X)\)}}. {IV}},
 FJournal = {Topology},
 Journal = {Topology},
 ISSN = {0040-9383},
 Volume = {5},
 Pages = {21--71},
 Year = {1966},
 Language = {English},
 DOI = {10.1016/0040-9383(66)90004-8},
 zbMATH = {3234237},
 Zbl = {0145.19902}
}

@article{dhs,
 ISSN = {0003486X},
 URL = {http://www.jstor.org/stable/1971440},
 author = {Ethan S. Devinatz and Michael J. Hopkins and Jeffrey H. Smith},
 journal = {Annals of Mathematics},
 number = {2},
 pages = {207--241},
 publisher = {Annals of Mathematics},
 title = {Nilpotence and Stable Homotopy Theory I},
 urldate = {2024-01-25},
 volume = {128},
 year = {1988}
}

@Article{atiyah,
 Author = {Michael F. {Atiyah}},
 Title = {{K-theory and reality.}},
 FJournal = {{The Quarterly Journal of Mathematics. Oxford Second Series}},
 Journal = {{Q. J. Math., Oxf. II. Ser.}},
 ISSN = {0033-5606; 1464-3847/e},
 Volume = {17},
 Pages = {367--386},
 Year = {1966},
 Publisher = {Oxford University Press, Oxford},
 Language = {English},
 Zbl = {0146.19101}
}

@InCollection{marksatya,
 Author = {Behrens, Mark and Pemmaraju, Satya},
 Title = {On the existence of the self map {{\(\upsilon ^9_2\)}} on the Smith-Toda complex {{\(V (1)\)}} at the prime 3},
 BookTitle = {Homotopy theory: relations with algebraic geometry, group cohomology, and algebraic \(K\)-theory. Papers from the international conference on algebraic topology, Northwestern University, Evanston, IL, USA, March 24--28, 2002},
 ISBN = {0-8218-3285-9},
 Pages = {9--49},
 Year = {2004},
 Publisher = {Providence, RI: American Mathematical Society (AMS)},
 Language = {English},
 Keywords = {55Q10,55Q51,55T15},
 zbMATH = {2144703},
 Zbl = {1081.55011}
}

@article{nave,
    author = {Lee S. Nave},
    title = {The Smith-Toda complex $V ((p+1)/2)$ does not exist},
    journal = {Ann. of Math.},
Volume = {171},
 Number = {1},
 Pages = {491--509},
    year = {2010} 
}

@misc{transchromatic,
      title={Transchromatic phenomena in the equivariant slice spectral sequence}, 
      author={Lennart Meier and XiaoLin Danny Shi and Mingcong Zeng},
      year={2024},
      eprint={2403.00741},
      archivePrefix={arXiv},
      primaryClass={math.AT},
      url={https://arxiv.org/abs/2403.00741}, 
}

@Article{ghmr,
 Author = {Goerss, Paul G. and Henn, Hans-Werner and Mahowald, M. and Rezk, Charles},
 Title = {A resolution of the {{\(K(2)\)}}-local sphere at the prime 3},
 FJournal = {Annals of Mathematics. Second Series},
 Journal = {Ann. Math. (2)},
 ISSN = {0003-486X},
 Volume = {162},
 Number = {2},
 Pages = {777--822},
 Year = {2005},
 Language = {English},
 DOI = {10.4007/annals.2005.162.777},
 Keywords = {55P60,55P42,55Q10,55Q40,55Q45,55Q51},
 zbMATH = {5042686},
 Zbl = {1108.55009}
}

@incollection {hopmiller,
    AUTHOR = {Rezk, Charles},
     TITLE = {Notes on the {H}opkins-{M}iller theorem},
 BOOKTITLE = {Homotopy theory via algebraic geometry and group
              representations ({E}vanston, {IL}, 1997)},
    SERIES = {Contemp. Math.},
    VOLUME = {220},
     PAGES = {313--366},
 PUBLISHER = {Amer. Math. Soc., Providence, RI},
      YEAR = {1998},
   MRCLASS = {55N22 (55S99)},
  MRNUMBER = {1642902},
       DOI = {10.1090/conm/220/03107},
       URL = {https://doi-org.proxy.library.uu.nl/10.1090/conm/220/03107},
}

@article {devhop,
    AUTHOR = {Devinatz, Ethan S. and Hopkins, Michael J.},
     TITLE = {The action of the {M}orava stabilizer group on the
              {L}ubin-{T}ate moduli space of lifts},
   JOURNAL = {Amer. J. Math.},
  FJOURNAL = {American Journal of Mathematics},
    VOLUME = {117},
      YEAR = {1995},
    NUMBER = {3},
     PAGES = {669--710},
      ISSN = {0002-9327},
   MRCLASS = {55N22 (14L05 55P42 55Q45)},
  MRNUMBER = {1333942},
MRREVIEWER = {Andrew J. Baker},
       DOI = {10.2307/2375086},
       URL = {https://doi-org.proxy.library.uu.nl/10.2307/2375086},
}

@Article{bmq,
 Author = {Behrens, Mark and Mahowald, Mark and Quigley, J. D.},
 Title = {The 2-primary {Hurewicz} image of {{\(\operatorname{tmf}\)}}},
 FJournal = {Geometry \& Topology},
 Journal = {Geom. Topol.},
 ISSN = {1465-3060},
 Volume = {27},
 Number = {7},
 Pages = {2763--2831},
 Year = {2023},
 Language = {English},
 DOI = {10.2140/gt.2023.27.2763},
 Keywords = {55Q45,55Q51,55T15},
 zbMATH = {7781520}
}

@Article{mmmm,
 Author = {M. {Behrens} and M. {Hill} and M. J. {Hopkins} and M. {Mahowald}},
 Title = {{On the existence of a \(v_2^{32}\)-self map on \(M(1,4)\) at the prime 2.}},
 FJournal = {{Homology, Homotopy and Applications}},
 Journal = {{Homology Homotopy Appl.}},
 ISSN = {1532-0073; 1532-0081/e},
 Volume = {10},
 Number = {3},
 Pages = {45--84},
 Year = {2008},
 Publisher = {International Press of Boston, Somerville, MA},
 Language = {English},
 MSC2010 = {55Q51 55Q45},
 Zbl = {1162.55010}
}

@Article{mrw,
 Author = {Haynes R. {Miller} and Douglas C. {Ravenel} and W. Stephen {Wilson}},
 Title = {{Periodic phenomena in the Adams-Novikov spectral sequence.}},
 FJournal = {{Annals of Mathematics. Second Series}},
 Journal = {{Ann. Math. (2)}},
 ISSN = {0003-486X; 1939-8980/e},
 Volume = {106},
 Pages = {469--516},
 Year = {1977},
 Publisher = {Princeton University, Mathematics Department, Princeton, NJ},
 Language = {English},
 MSC2010 = {55T15 55N20 55Q45},
 Zbl = {0374.55022}
}

@Article{hopkinssmith,
 Author = {Michael J. {Hopkins} and Jeffrey H. {Smith}},
 Title = {{Nilpotence and stable homotopy theory. II.}},
 FJournal = {{Annals of Mathematics. Second Series}},
 Journal = {{Ann. Math. (2)}},
 ISSN = {0003-486X; 1939-8980/e},
 Volume = {148},
 Number = {1},
 Pages = {1--49},
 Year = {1998},
 Publisher = {Princeton University, Mathematics Department, Princeton, NJ},
 Language = {English},
 MSC2010 = {55P42 55N20 55Q10 55Q45},
 Zbl = {0924.55010}
}

@article {quillen,
    AUTHOR = {Quillen, Daniel},
     TITLE = {On the formal group laws of unoriented and complex cobordism
              theory},
   JOURNAL = {Bull. Amer. Math. Soc.},
  FJOURNAL = {Bulletin of the American Mathematical Society},
    VOLUME = {75},
      YEAR = {1969},
     PAGES = {1293--1298},
      ISSN = {0002-9904},
   MRCLASS = {57.10},
  MRNUMBER = {253350},
MRREVIEWER = {R.\ E.\ Stong},
       DOI = {10.1090/S0002-9904-1969-12401-8},
       URL = {https://doi.org/10.1090/S0002-9904-1969-12401-8},
}

@article{iwx,
author = {Daniel C. Isaksen  and Guozhen Wang  and Zhouli Xu },
title = {Stable homotopy groups of spheres},
journal = {Proceedings of the National Academy of Sciences},
volume = {117},
number = {40},
pages = {24757-24763},
year = {2020},
doi = {10.1073/pnas.2012335117},
URL = {https://www.pnas.org/doi/abs/10.1073/pnas.2012335117},
eprint = {https://www.pnas.org/doi/pdf/10.1073/pnas.2012335117}
}

@misc{piotrlee,
      title={The monochromatic Hahn-Wilson conjecture}, 
      author={David Jongwon Lee and Piotr Pstrągowski},
      year={2024},
      eprint={2410.08029},
      archivePrefix={arXiv},
      primaryClass={math.AT},
      url={https://arxiv.org/abs/2410.08029}, 
}

@article{landweber,
author = {Peter S. Landweber},
title = {Conjugations on complex manifolds and equivariant homotopy of $MU$},
volume = {74},
journal = {Bull. Amer. Math. Soc.},
pages = {271 -- 274},
year = {1968}
}

@article{hewett,
title = {Finite Subgroups of Division Algebras over Local Fields},
journal = {Journal of Algebra},
volume = {173},
number = {3},
pages = {518-548},
year = {1995},
issn = {0021-8693},
doi = {https://doi.org/10.1006/jabr.1995.1101},
url = {https://www.sciencedirect.com/science/article/pii/S0021869385711015},
author = {T. Hewett}
}

@Misc{burklundquotients,
 Author = {Burklund, Robert},
 Title = {Multiplicative structures on {Moore} spectra},
 Year = {2022},
 HowPublished = {Preprint, {arXiv}:2203.14787 [math.{AT}] (2022)},
 URL = {https://arxiv.org/abs/2203.14787},
 arXiv = {arXiv:2203.14787}
}

@article{hahnwilson,
author = {Jeremy Hahn and Dylan Wilson},
title = {{Redshift and multiplication for truncated Brown--Peterson spectra}},
volume = {196},
journal = {Annals of Mathematics},
number = {3},
publisher = {Department of Mathematics of Princeton University},
pages = {1277 -- 1351},
keywords = {K-theory, Lichtenbaum--Quillen conjecture, redshift, topological cyclic homology, topological Hochschild homology, truncated Brown--Peterson spectrum},
year = {2022},
doi = {10.4007/annals.2022.196.3.6},
URL = {https://doi.org/10.4007/annals.2022.196.3.6}
}

@misc{burklundlevy,
    title={The $K$-theoretic telescope conjecture away from $p$},
    author={Robert Burklund and Ishan Levy},
    note = {To appear}
}

@misc{levy,
      title={The algebraic K-theory of the K(1)-local sphere via TC}, 
      author={Ishan Levy},
      year={2022},
      eprint={2209.05314},
      archivePrefix={arXiv},
      primaryClass={math.AT}
}

@article{Z,
    author = {Prasit Bhattacharya and Philip Egger},
    title = {A class of $2$-local finite spectra which admit a $v_2^1$-self-map},
    journal = {Advances in Mathematics},
    volume = {360},
    year = {2020}
}

@article{ehp,
 ISSN = {0003486X, 19398980},
 URL = {http://www.jstor.org/stable/2007048},
 author = {Mark Mahowald},
 journal = {Annals of Mathematics},
 number = {1},
 pages = {65--112},
 publisher = {[Annals of Mathematics, Trustees of Princeton University on Behalf of the Annals of Mathematics, Mathematics Department, Princeton University]},
 title = {The Image of J in the EHP Sequence},
 urldate = {2025-07-04},
 volume = {116},
 year = {1982}
}

@article {BHSZ,
    AUTHOR = {Beaudry, Agn\`es and Hill, Michael A. and Shi, XiaoLin Danny and
              Zeng, Mingcong},
     TITLE = {Models of {L}ubin-{T}ate spectra via real bordism theory},
   JOURNAL = {Adv. Math.},
  FJOURNAL = {Advances in Mathematics},
    VOLUME = {392},
      YEAR = {2021},
     PAGES = {Paper No. 108020, 58},
      ISSN = {0001-8708},
   MRCLASS = {55N22 (11S31 55P42)},
  MRNUMBER = {4313964},
MRREVIEWER = {Jonathan Beardsley},
       DOI = {10.1016/j.aim.2021.108020},
       URL = {https://doi.org/10.1016/j.aim.2021.108020},
}

@book {HHRbook,
    AUTHOR = {Hill, Michael A. and Hopkins, Michael J. and Ravenel, Douglas
              C.},
     TITLE = {Equivariant stable homotopy theory and the {K}ervaire
              invariant problem},
    SERIES = {New Mathematical Monographs},
    VOLUME = {40},
 PUBLISHER = {Cambridge University Press, Cambridge},
      YEAR = {2021},
     PAGES = {ix+870},
      ISBN = {978-1-108-83144-4},
   MRCLASS = {55P91 (55N22 55P42 55Q45 55T15 55U35 57R15 57R91)},
  MRNUMBER = {4273305},
MRREVIEWER = {J.\ P. C. Greenlees},
       DOI = {10.1017/9781108917278},
       URL = {https://doi.org/10.1017/9781108917278},
}

@article {balmersanders,
    AUTHOR = {Balmer, Paul and Sanders, Beren},
     TITLE = {The spectrum of the equivariant stable homotopy category of a
              finite group},
   JOURNAL = {Invent. Math.},
  FJOURNAL = {Inventiones Mathematicae},
    VOLUME = {208},
      YEAR = {2017},
    NUMBER = {1},
     PAGES = {283--326},
      ISSN = {0020-9910,1432-1297},
   MRCLASS = {18E30 (55P42 55U35)},
  MRNUMBER = {3621837},
MRREVIEWER = {Geoffrey\ M. L. Powell},
       DOI = {10.1007/s00222-016-0691-3},
       URL = {https://doi.org/10.1007/s00222-016-0691-3},
}

@article {6author,
    AUTHOR = {Barthel, Tobias and Hausmann, Markus and Naumann, Niko and
              Nikolaus, Thomas and Noel, Justin and Stapleton, Nathaniel},
     TITLE = {The {B}almer spectrum of the equivariant homotopy category of
              a finite abelian group},
   JOURNAL = {Invent. Math.},
  FJOURNAL = {Inventiones Mathematicae},
    VOLUME = {216},
      YEAR = {2019},
    NUMBER = {1},
     PAGES = {215--240},
      ISSN = {0020-9910,1432-1297},
   MRCLASS = {55P42 (18E30 55U35)},
  MRNUMBER = {3935041},
MRREVIEWER = {Julia\ Bergner},
       DOI = {10.1007/s00222-018-0846-5},
       URL = {https://doi.org/10.1007/s00222-018-0846-5},
}

@misc{nardinshah,
      title={Parametrized and equivariant higher algebra}, 
      author={Denis Nardin and Jay Shah},
      year={2022},
      eprint={2203.00072},
      archivePrefix={arXiv},
      primaryClass={math.AT},
      url={https://arxiv.org/abs/2203.00072}, 
}

\end{document}